\newcommand{\R}{{\mathbf R}}
\newcommand{\RR}{{\mathcal R}}
\newcommand{\Len}{\mathcal{L}}
\newcommand{\spt}{\mathop{\rm spt}}
\renewcommand{\b}{{\rm b}}
\newcommand{\p}{{\partial}}
\newcommand{\Hess}{\mathop{\rm Hess}}
\newcommand{\Rc}{{\rm Ric}}
\newcommand{\T}{{\Lambda}}
\newcommand{\vol}{{\rm vol}}
\newcommand{\lims}{\varlimsup}
\renewcommand{\d}{{d}}
\newtheorem{theorem}{Theorem}
\newtheorem{corollary}[theorem]{Corollary}
\newtheorem{definition}[theorem]{Definition}
\newtheorem{lemma}[theorem]{Lemma}
\newtheorem{proposition}[theorem]{Proposition}
\newtheorem{remark}[theorem]{Remark}
\newdimen\dummy
\newcommand{\fr}{\penalty-20\null\hfill$\blacksquare$}         %quadratino nero alla fine del remark
\DeclareMathOperator{\supp}{\spt}
\newcommand{\Dis}{{\mathcal{D}}}
\newcommand{\Vol}{\mathrm{Vol}}
\newcommand{\eps}{\varepsilon}
\begin{document}

\title[Weighted $C^1$-Lorentzian splitting theorem]
{A Lorentzian splitting theorem for continuously differentiable 
metrics and weights 
}
\author[Braun]{Mathias Braun}
\address{Institute of Mathematics, EPFL, 1015 Lausanne, Switzerland \tt mathias. braun@epfl.ch}
\author[Gigli]{Nicola Gigli}\address{SISSA, Trieste, Italy \tt ngigli@sissa.it}
\author[McCann]{Robert J. McCann}
\address{Department of Mathematics, University of Toronto, Toronto, Ontario, Canada \tt mccann@math.toronto.edu}
\author[Ohanyan]{Argam Ohanyan}
\address{Department of Mathematics, University of Toronto, Toronto, Ontario, Canada \tt argam.ohanyan@utoronto.ca}
\author[S\"amann]{Clemens S\"amann}
\address{{Faculty of Mathematics, University of Vienna, Vienna, Austria\newline \tt clemens.saemann@univie.ac.at}}

\thanks{2020 MSC Classification Primary: 83C75 Secondary: 35J92 35Q75 49Q22 51K10 53C21 53C50 58J05.}

\thanks{\copyright \today\ by the authors.}
\begin{abstract}
We prove a splitting theorem  for globally hyperbolic, weighted spacetimes with metrics and weights of regularity $C^1$ by combining elliptic techniques for the negative homogeneity $p$-d'Alembert operator from our recent work in the smooth setting with the concept of line-adapted curves introduced here. Our results extend the Lorentzian splitting theorem proved for smooth globally hyperbolic spacetimes by Galloway --- and variants of its weighted counterparts by Case and Woolgar--Wylie --- to this low regularity setting.
\end{abstract}

\maketitle

\tableofcontents

\section{Introduction}

Einstein's theory of gravity postulates that the geometry of spacetime is described by a Lorentzian metric $g$ on a smooth manifold $M$ that solves
the Einstein field equation:  a second-order nonlinear system relating the Ricci curvature of the metric tensor  
to the physics described by the total densities and fluxes of energy and momentum in the system.  Even for the vacuum Einstein equation, solutions need not generally be smooth.  On one hand,  the works of Penrose and Hawking give generic conditions under which 
geodesics become incomplete either inside black hole horizons (see Penrose \cite{Penrose65a}) or analogous to the big bang (see Hawking \cite{Hawking66a}).  On the other hand, linearization of the field equations yields a wave equation,  and wave equations are well-known to support (and to propagate) nonsmooth solutions --- in this case representing nonsmooth gravity waves.  It is thus highly desirable for a theory of gravity which encompasses nonsmooth metric tensors $g$.

Nonsmooth proofs of the Hawking and Penrose theorems
have been provided for metric of class $g \in C^1$ by Graf \cite{Graf20}; see also Kunzinger--Ohanyan--Schinnerl--Steinbauer \cite{KunzingerOhanyanSchinnerlSteinbauer2022} for a version of the more sophisticated Hawking--Penrose theorem in $C^1$-regularity, as well as Schinnerl--Steinbauer \cite{SchinnerlSteinbauer} for analogous work on the Gannon--Lee theorems. More recently, Calisti--Graf--Hafemann--Kunzinger--Steinbauer \cite{calisti2025hawking} extended the Hawking singularity theorem to locally Lipschitz metrics.  Versions of the Hawking (and Penrose) 
theorems have also been obtained by Cavalletti and Mondino~\cite{CavallettiMondino20+}
(with Manini~\cite{CavallettiManiniMondino25+}) by building on the framework of Kunzinger and S\"amann~\cite{KunzingerSaemann18},
which relaxes even the assumption that the underlying spacetime $M$ is a manifold.

In our previous work \cite{BGMOS24+} we gave an elliptic proof of the Eschenburg~\cite{Eschenburg88}, Galloway \cite{Galloway89} and Newman \cite{Newman90} splitting theorems which apply to smooth Lorentzian metrics. {Central to our approach was the $p$-d'Alembert operator $\square_p$ for $0 \neq p < 1$, whose elliptic action on non-smooth time functions was already observed 
in} \cite{BeranOctet24+}; cf.\ \cite{braun2024nonsmooth}. In the present manuscript, we combine this novel strategy with additional ideas to obtain a comparable but new result for metric tensors which are merely of regularity
$g \in C^{1}$, and for which the Ricci tensor can moreover be modified to include a $C^1$-smooth Bakry--\'Emery style weight~\cite{BakryEmery85},  as in e.g.\ Case \cite{Case10} and
Woolgar \cite{Woolgar13} with Wylie \cite{WoolgarWylie16, WoolgarWylie18}. Our nonsmooth result extends Galloway's version of the theorem \cite{Galloway89}, which requires global hyperbolicity. In the smooth setting either global hyperbolicity or timelike geodesic completeness suffices, as shown by Galloway \cite{Galloway89} and by Newman \cite{Newman90} respectively, the latter having been conjectured a decade earlier by Yau \cite{Yau82}.  
Global hyperbolicity is widely acknowledged to be the more physically relevant completeness assumption, as in e.g.\ Witten \cite{Witten20}.  For another relaxation of the splitting hypotheses see \cite{galloway2025note}.

Let us recall some basics about spacetimes with metrics of regularity $C^1$. A pair $(M,g)$ will be called a {\bf $C^1$-spacetime} if $M$ is a  {second countable, connected, smooth manifold, equipped with  
a signature $(+,-,\ldots,-)$ metric tensor $g$ of regularity $C^1$, and there exists a continuous vector field $F$ on $M$ satisfying $g(F,F)>0$ to} distinguish future from past. 
For $g_{ij} \in C^1(M)\backslash C^{1,1}_{loc}(M)$, geodesics still exist (by Peano's theorem) but need no longer be determined by their initial velocities (since the right hand side of the geodesic equation 
\begin{equation}\label{geodesic eqn}
    \frac{d^2 \gamma^i}{d t^2} = - \Gamma^{i}_{jk} \frac{d \gamma^j}{dt}\frac{d \gamma^k}{dt} \qquad \forall i \in \{ 1,\ldots,n=\dim M\}
\end{equation}
is merely $C^0$). However, as in Graf \cite[Cor.\ 2.4]{Graf20}, geodesics in this setting remain $C^2$-regular and causal geodesics do not change their causal {type}. Here, a locally Lipschitz curve $\gamma$ is called {\bf causal} if $g(\gamma'_t,\gamma'_t) \geq 0$ for a.e.\ $t  \in I$. The Lorentzian arclength functional $\Len$ is defined in the usual way for locally Lipschitz causal curves $\gamma:I \subset \R \to M$:
\begin{equation*}
    \Len(\gamma) := \int_I \sqrt{g(\gamma'_t,\gamma'_t)} \, dt.
\end{equation*}
A causal curve is said to be {\bf future-directed} --- or {\bf f.d.}\ for short ---
if in addition $g(\gamma'_t,F)\ge 0$ for a.e.\ $t \in I$.
The time separation is then
\begin{equation}
\label{time separation}
    \ell(x,y):=\sup\{\Len(\gamma) : \gamma:[a,b] \to M \text{ f.d.}, \gamma_a = x, \gamma_b = y\},
\end{equation}
where the supremum is understood to be $-\infty$ if there are no f.d.\ curves $\gamma$ from $x$ to $y$. Curves achieving the maximum \eqref{time separation} 
are called {\bf maximizing}, {\bf maximizers} or {\bf maximal};
 they are necessarily (reparametrizations of) geodesics in the ODE sense, i.e., solutions of the geodesic equation \eqref{geodesic eqn}, see Lange--Lytchak--S\"amann \cite{LangeLytchakSaemann}. Unless noted otherwise, by {\bf geodesic} we will always mean a solution of the geodesic equation, hence in particular affinely parametrized.

A $C^1$-spacetime $(M,g)$ is called {\bf globally hyperbolic} if it is non-totally imprisoning (i.e.\ no compact set contains a future or past inextendible causal curve) and the causal diamonds $J^+(x) \cap J^-(z)$ are compact for each $x,z \in M$.  Given $-\infty \le a < b \le \infty$,  a causal curve $\gamma:(a,b) \longrightarrow M$ is said to be 
{\bf inextendible} unless
$$
\lim_{t \downarrow a} \sigma_t
\quad {\rm or} \quad
\lim_{t \uparrow b} \sigma_t
$$
converges to a limit in the (boundaryless) manifold $M$.
Standard consequences of global hyperbolicity which remain true
for $g \in C^1$ include the attainment of \eqref{time separation}
whenever $\ell(x,y) \ge 0$, the 
upper semicontinuity of the time separation $\ell$, and continuity and real-valuedness of $\ell_+ := \max\{\ell,0\}$; see S\"amann \cite{Saemann16}.

Given $V \in C^1(M)$
and a parameter $N \in [n,\infty)$
--- with $n:=\dim M$ and the convention $V=0$ if $N=n$ ---
the Bakry--\'Emery Ricci tensor can be defined as the distributional analog  of
\begin{equation}\label{B-E Ric}
\Rc^{(g,N,V)} := \Rc + \Hess V - \frac{1}{N-n}dV \otimes dV,
\end{equation}
noting the fact that 
$\smash{C^{1}(M) \subset W^{1,2}_{loc}(M)}$ makes the metric tensor a fortiori Geroch--Traschen class \cite{GerochTraschen87}.

\begin{definition}[Bakry--\'Emery energy condition; timelike distributionally]\label{D:BEC}
We say the {\bf Bakry--\'Emery energy condition} is satisfied if
$\Rc^{(g,N,V)} \ge 0$ holds {\bf timelike distributionally}, i.e.\ whenever contracted with $X \otimes X$ and integrated against $\mu$,  for each smooth (or, equivalently, $C^1$) timelike vector field $X$ 
and each smooth compactly supported (unoriented) volume density $\mu \ge 0$ on $M$; cf. Graf \cite[Definition 3.3]{Graf20}.
\end{definition}

For us,  a {\bf complete timelike line} is a curve $\gamma:\R\to M$ such that
\begin{equation}
    \label{eq:defline}
    \ell(\gamma_s,\gamma_t)=t-s \qquad\forall s,t\in\R,\ s<t.
\end{equation}
Such lines are known to be locally Lipschitz maximizing $g$-geodesics~\cite{McCann24} --- thus in fact $C^2$-smooth \cite{LangeLytchakSaemann};  cf. Lemma \ref{le:regularityadapted} below.

\begin{theorem}[Weighted nonsmooth Lorentzian splitting theorem]
\label{T2:weighted splitting}
Let $(M,g)$ be an $n$-dimensional globally hyperbolic spacetime with $C^1$ metric tensor,  $N \in [n,\infty)$ and $V \in C^1(M)$. Assume   
the Bakry--\'Emery energy condition, {i.e.\ $\Rc^{(g,N,V)} \geq 0$} is satisfied {timelike distributionally} as in Definition 
\ref{D:BEC}, and  that
$M$ contains a complete timelike line $\gamma:\R \to M$. Then there is a complete Riemannian manifold $(S,h)$ with $C^1$ metric tensor and {a bijection} $\Phi:\R\times S\to M$ so that:
\begin{itemize}
    \item[-] $\Phi$ is a $C^2$ isometry,     and $g = dt^2-h$ on $M$;
    \item[-] $\Phi(t,z)=\gamma_t$ for every $t\in\R$ and some $z\in S$;
    \item[-] the weighted measure $e^{-V}\vol_g$ on $M$ splits via $\Phi$ as $dt \otimes e^{-V} \vol_h$; 
    \item[-] $(S,h)$ has a distributionally
nonnegative Bakry--\'Emery 
Ricci tensor $\Rc^{(h,N-1,V)}$.
\end{itemize}
\end{theorem}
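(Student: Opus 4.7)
The plan is to adapt the elliptic $p$-d'Alembertian strategy of \cite{BGMOS24+} to the present $C^1$ and weighted setting, with the newly introduced line-adapted curves standing in for the asymptotic geodesics of the smooth case; they are necessary because the failure of initial-value uniqueness in \eqref{geodesic eqn} under $C^1$ Christoffels precludes the classical construction. I first define the forward and backward Busemann functions of $\gamma$,
\begin{equation*}
b^+(x) := \lim_{t\to\infty}\bigl(t - \ell(x,\gamma_t)\bigr), \qquad b^-(x) := \lim_{s\to-\infty}\bigl(\ell(\gamma_s,x) + s\bigr).
\end{equation*}
Global hyperbolicity, the upper semicontinuity of $\ell$ in the $C^1$ setting, and the reverse triangle inequality show these limits exist wherever the requisite causal relations hold, that $b^\pm$ are locally Lipschitz, and that $b^- \le b^+$ with equality along $\gamma$; at differentiability points the gradients are f.d.\ timelike of unit $g$-norm.

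The heart of the argument is to establish, in the distributional sense,
\begin{equation*}
\square_p^V b^+ \le 0 \quad\text{and}\quad \square_p^V b^- \ge 0,
\end{equation*}
for the weighted $p$-d'Alembertian $\square_p^V$ with $0 \ne p < 1$ of \cite{BGMOS24+}. This is where the line-adapted curves are decisive: rather than integrating $\nabla b^\pm$ (ill-defined for lack of uniqueness in \eqref{geodesic eqn}), one feeds the weighted $N$-Bochner inequality by running a distributional Ricatti comparison along the $C^2$ line-adapted asymptotes of Lemma~\ref{le:regularityadapted} that realize the defining suprema of $b^\pm$. The timelike-distributional Bakry--\'Emery condition of Definition~\ref{D:BEC} delivers the curvature sign. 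The elliptic strong maximum principle for $\square_p^V$ developed in \cite{BGMOS24+}, which applies at $C^1$ regularity, then forces $b^+ \equiv b^-$ globally; denote this common function by $b$.

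The equality case of the weighted Bochner inequality forces $\Hess b \equiv 0$ and $\nabla b(V) \equiv 0$ distributionally (the latter because the corresponding identity holds a priori along $\gamma$). Elliptic bootstrap for $\square_p^V b = 0$ together with $|\nabla b|_g^2 \equiv 1$ upgrades $b$ to $C^2$, so $S := b^{-1}(0)$ is a $C^2$ hypersurface carrying an induced $C^1$ Riemannian metric $h$. Letting $\Phi(t,z)$ be the value at parameter $t$ of the line-adapted integral curve of $\nabla b$ starting at $z \in S$ produces a $C^2$ bijection $\Phi : \R \times S \to M$ pulling $g$ back to $dt^2 - h$. The weighted measure splits because $\nabla b(V) \equiv 0$ makes $V$ independent of $t$. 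A direct computation using the vanishing of the second fundamental form of $S$ (a consequence of $\Hess b \equiv 0$) shows that the ambient Bakry--\'Emery condition restricts along $TS$ to distributional nonnegativity of $\Rc^{(h,N-1,V)}$.

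The main obstacle, I expect, is the distributional $\square_p^V$-inequalities for $b^\pm$. In the smooth case, uniqueness of asymptotes reduces these bounds to a pointwise Ricatti computation; here, constructing the line-adapted family and verifying that the Bochner manipulation makes sense against smooth timelike test fields and volume densities as in Definition~\ref{D:BEC} are the principal new technical hurdles. Secondary difficulties --- the strong maximum principle and the $C^2$-regularity bootstrap with $C^1$ coefficients --- are handled by the elliptic PDE machinery already present in \cite{BGMOS24+}.
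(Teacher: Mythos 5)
Your outline follows the same broad strategy as the paper (Busemann functions, $p$-d'Alembertian sub/super-harmonicity, a maximum principle, Bochner, and the flow of $\nabla \b$), but it contains a genuine gap at the globalization stage. The strong maximum principle for the (degenerate-elliptic, frozen-coefficient) operator does \emph{not} force $\b^+\equiv\b^-$ globally: it only propagates the equality $\b^+=\b^-$ from the touching set to a \emph{neighbourhood} of it, i.e.\ to a neighbourhood of any $\gamma$-adapted curve (Proposition \ref{Proposition: Strongtangencyprinciple}). The entire second half of the paper (Theorem \ref{T:productstructure} and the proof of Theorem \ref{T2:weighted splitting}) is devoted to extending this local splitting to all of $M$: one must show that the flow lines of $\nabla\b$ are \emph{complete} timelike lines (which requires proving that co-rays issued from points on these flow lines are tangent to them, hence timelike, via Proposition \ref{Proposition: generalizedcoraysalongrayagreewithray}, and that the resulting asymptotes are themselves $\gamma$-adapted), that $S$ is complete (the open/closed argument on the set $\RR$ of good radii, with the ``vertical'' and ``lateral'' extension claims), and finally that the image of the flow map is all of $M$ (Lemma \ref{le:reaching}). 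Your sketch simply asserts that the flow map is a bijection $\R\times S\to M$; none of completeness of the flow, completeness of $S$, or surjectivity follows from what precedes it, and these are precisely the points where the timelike geodesic completeness hypothesis of the classical arguments is being replaced.

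A second, related problem: you claim at the outset that $\b^\pm$ are locally Lipschitz as a consequence of global hyperbolicity and the reverse triangle inequality. This is false in general --- Busemann functions of a timelike ray are a priori only upper semicontinuous. Local Lipschitz continuity (and the equi-superdifferentiability needed both for uniform ellipticity of $\square_p$ and for passing the d'Alembert comparison to the $t\to\infty$ limit) holds only where the timelike co-ray condition is verified, and establishing it near $\gamma$-adapted curves is the content of Propositions \ref{Prop: boundinitialtangents}--\ref{P2:semiconcave}; this is the main role of the $\gamma$-adapted curves, not the Riccati comparison you assign them. Relatedly, your description of the derivation of $\square_p^{(g,V)}\b^+\le 0$ omits the actual mechanism in the $C^1$ setting: one cannot run a Riccati comparison for $g$ itself (no second variation, no well-behaved cut locus), so the paper approximates $g$ by the smooth causality-narrowing metrics $g_\varepsilon$ of Lemma \ref{Le:Good approx}, applies the smooth comparison there, and passes to the limit using the a.e.\ convergence of $d\ell_\varepsilon$. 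Finally, the flow map is a priori only $C^1$ (since $\nabla\b\in C^1$); upgrading the isometry to $C^2$ requires the separate argument of Proposition \ref{Prop: semiRiemannisometriesregularity}.
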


\begin{remark}[Smoother splittings of smoother metrics]\label{R:weighted splitting}
{\rm With the notation and hypotheses of Theorem  \ref{T2:weighted splitting}, 
if $g$ is $C^k$ for some integer $k \ge 1$,
then the isometry $\Phi$ acquires regularity $\smash{C^{k+1}}$.  This follows from results of Hartman~\cite{Hartman58L}
 in Corollary~\ref{C: semiRiemannisometriesregularity} below. Alternately, it follows from the main result of Calabi and Hartman \cite{CalabiHartman1970},
after noting that $\Phi$ gives an isometry between the Riemannian metrics $dt^2+h$ on $\R \times S$ and $2 dt^2-g$ on $M$.
Similarly, if $g \in C^{k,\alpha}$ for $k \ge 1$ and $\alpha \in (0,1)$,  then $\Phi \in C^{k+1,\alpha}$ by Taylor's results for Riemannian isometries \cite[Thm. 2.1]{taylor2006existence}. 
}\fr\end{remark}

\begin{remark}[On the range of synthetic dimension $N$]{\rm 
In the context of smooth Lorentzian splitting theorems, there are works where the cases $N=\infty$ and even $N < 0$ are permitted, see Case \cite{Case10} and Woolgar--Wylie \cite{WoolgarWylie16, WoolgarWylie18} (and Lu--Minguzzi--Ohta \cite{LuMinguzziOhta23} as well as Caponio--Ohanyan--Ohta \cite{CaponioOhanyanOhta24} in the Lorentz--Finsler setting). In these ranges of $N$, a further completeness assumption is required to ensure the weighted $p$-sub- and  
$p$-superharmonicity, respectively, of the Busemann function in the limit of the $p$-d'Alembertian comparisons obtained for the approximate Busemann functions. In order not to distract from the adaptation of our $p$-d'Alembertian techniques from \cite{BGMOS24+} to the low regularity setting, we will not consider these ranges of $N$ in this work.}\fr
\end{remark}

{Relative to previous approaches surveyed in our prequel \cite{BGMOS24+}, 
a technical innovation of the present manuscript
--- inspired by optimal transportation ideas \cite{Ambrosio03,CaffarelliFeldmanMcCann00,TrudingerWang01} --- 
is the early introduction of curves $\sigma$ which are {\bf $\gamma$-adapted},  meaning proper-time parametrized curves $\sigma:(a,b) \longrightarrow M $ along which
the forward and backward Busemann functions $\b^\pm$
associated to the complete timelike line $\gamma$ --- as in \eqref{approximate bi-Busemann}--\eqref{bi-Busemann limit} below --- agree 
and increase at their minimum possible (i.e.\ unit) rate.
In particular,  $\gamma$ itself is $\gamma$-adapted.  More generally, any $\gamma$-adapted curve $\sigma$ is a maximizing timelike geodesic.  
We prove that any co-rays of $\gamma$ (defined in Lemma~\ref{le:corays}) which start on $\sigma$
must be tangent to $\sigma$ and are hence necessarily timelike (Proposition~\ref{Proposition: generalizedcoraysalongrayagreewithray}). We exploit this property as a substitute for  the timelike co-ray condition of Beem, Ehrlich, Markvorsen and Galloway, or rather to enforce that the latter
holds not only near $\gamma$ but also near $\sigma$. 
As Eschenburg showed, where all co-rays are timelike  the forward and backward Busemann functions $\b^\pm$ 
are not only continuous~\cite{BeemEhrlichMarkvorsenGalloway85}
but locally Lipschitz~\cite{Eschenburg88}.  As in our  prequel~\cite{BGMOS24+},  this uniformizes the ellipticity of the $p$-d'Alembert operators acting on $\b^\pm$ near $\sigma$.
Relaxing the equi-semiconcavity from~\cite{BGMOS24+},  we show
the equi-superdifferentiability possessed by the Busemann limits in this less smooth setting (Proposition \ref{Pr:Equisupdist} below)
still allows us to deduce that $\b^+\ge \b^-$ are super- and sub-$p$-harmonic functions (Corollary \ref{L2:harmonicity}) which touch along $\sigma$.
The strong maximum principle (Proposition \ref{Proposition: Strongtangencyprinciple})
extends the equality $\b^+=\b^-$ to a neighbourhood $U$ of any $\gamma$-adapted curve, forcing $\b^+\in C^1(U)$.  
The fact that the Hessian of $\b:=\b^+=\b^-$ vanishes on $U$ 
(Proposition \ref{Proposition: higherregularity, weightconstant}) then follows from a weighted version  of the Bochner--Ohta  \cite{Bochner46} \cite{Ohta14h} identity as in \cite{BGMOS24+} (cf. \cite{MondinoSuhr23}) with a whiff of elliptic regularity theory (Lemma \ref{le:evans}).
 
 Having now established 
 $\b\in C^2(U)$, we are prepared to prove our splitting result,  namely that the metric splits into tangential and normal components along the level sets of $\b$
 (since $\nabla \b$ is parallel);  these are isometric to each other (since $\nabla \b$ is Killing).  We obtain this result not only in a neighbourhood of $\gamma$,
 but along every curve $\sigma$ that is $\gamma$-adapted.   We show that any co-rays (and asymptotes) of $\gamma$ intersecting flow lines of $\nabla \b$ are $\gamma$-adapted.
 This allows us to deduce they cannot end in finite-time using an  argument which seems simpler to us than Galloway's~\cite{Galloway89} --- and without recourse to the timelike geodesic completeness of \cite{Eschenburg88} \cite{Newman90} or the timelike nonbranching hypotheses of \cite{CavallettiMondino20+} \cite{KunzingerOhanyanSchinnerlSteinbauer2022}.
 This in turn implies that the asymptotes $\sigma$ to $\gamma$ form complete timelike lines,
 allowing us to conclude that the maximal neighbourhood $U$ around $\gamma$ which splits
 has a product structure (Theorem \ref{T:productstructure}), and that it must fill the manifold $M$ using a connectedness argument which seems simpler to us than Eschenburg's \cite{Eschenburg88} --- not to speak of its adaptations \cite{Galloway89}\cite{BGMOS24+}
 to the absence of the timelike geodesic completeness assumption.
The isometry provided is a priori $C^1$;  we upgrade it to $C^2$ in an appendix devoted to arbitrary isometries of semi-Riemannian metrics, or alternately using \cite{CalabiHartman1970} or \cite{taylor2006existence}. Finally, we close with an outlook on related directions of research to be explored in future work (Section \ref{Section: Conclusion}).

\section{Tensor distributions and smooth approximations} \label{Section: tensordistrib}

In this section, we summarize basics of distributional curvature bounds and optional weights. All results are by now well-established in the literature; we refer e.g.~to Kunzinger et al.\ \cite{KunzingerSteinbauerStojkovicVickers2015,KunzingerOhanyanSchinnerlSteinbauer2022}, Graf \cite{Graf20} and Braun--Calisti \cite{BraunCalisti2024} for more details. These sources should be consulted for technical details omitted here. A reader who is impatient or unfamiliar with tensor distributions can also skip directly to Lemma~\ref{Le:Good approx} below and take its conclusions as a shortcut \emph{definition} of ``Bakry--\'Emery Ricci curvature being bounded from below timelike distributionally.''

The intervening discussion will also clarify our $C^1$-hypotheses on the metric tensor and the weight, respectively.

In the sequel, we follow closely the presentation in \cite[Sec.\ 2]{KunzingerOhanyanSchinnerlSteinbauer2022}. For $k \in \mathbf{N}_0 \cup \{+\infty\}$, let $\Dis^{'(k)}(M)$ denote the topological dual of the locally convex topological vector space of all $k$-times continuously differentiable, compactly supported sections of the volume bundle $\Vol(M)$ of $M$ (such sections are also called  \emph{volume densities}; they do not carry orientations). We call $\Dis^{'(k)}(M)$ the space of \emph{distributions of order $k$} on $M$. If $k=\infty$, we simply write $\mathcal{D}'(M)$ and call it the space of \emph{distributions} on $M$. There are natural topological embeddings $\mathcal{D}^{'(k)}(M) \hookrightarrow \mathcal{D}^{'(k+1)}(M) \hookrightarrow \mathcal{D}'(M)$ for all $k$. Moreover, observe that $\mathcal{D}^{'(k)}(M)$ is a module over $C^k(M)$. (Recall that a module over a ring is an analogous structure to a vector space over a field; we denote the tensor product of such modules by $\otimes_{C^k(M)}$.) We call $u\in \Dis^{'(k)}(M)$ \emph{nonnegative} --- and write $u\geq 0$ --- if  $u(\mu)  \geq 0$ for every $k$-times continuously differentiable and compactly supported volume density $\mu \ge 0$. Correspondingly, given $v\in\Dis^{'(k)}(M)$ we write $u\geq v$ if $u-v$ is nonnegative.
Having defined (scalar) distributions on $M$, tensor distributions can now be obtained via tensor products: The space of \emph{tensor distributions of valence $(r,s)$ and order $k$} is defined by $\mathcal{D}^{'(k)}\mathcal{T}^r_s(M):= \mathcal{D}^{'(k)}(M) \otimes_{C^k(M)} \mathcal{T}^r_s(M)$, where $\mathcal{T}^r_s(M)$ are the smooth tensors on $M$ of valence $(r,s)$. Standard terminology is used for the more common valences, e.g.\ $\mathcal{D}'\mathcal{T}^1_0(M)$ are the \emph{distributional vector fields} on $M$; ($\mathfrak{X}(M):=\mathcal{T}^1_0(M)$ denotes the smooth vector fields).

A \emph{distributional connection} on a smooth manifold $M$ is a map $\nabla:~\mathfrak{X}(M) \times \mathfrak{X}(M) \to \mathcal{D}'\mathcal{T}^1_0(M)$ that satisfies the usual computation rules for connections. If $\nabla_X Y$ is an $L^p_{loc}$ (resp.\ $C^k$) vector field for all $X,Y \in \mathfrak{X}(M)$, then we call $\nabla$ an $L^p_{loc}$ (resp.\ $C^k$) connection. Connections which map into these higher regularity spaces can be extended to accommodate less regular vector fields $X,Y$ to give meaning to $\nabla_X Y$, see the discussion in \cite[p.\ 1149]{KunzingerOhanyanSchinnerlSteinbauer2022}. Using this, one can then define the \emph{distributional Riemann curvature tensor} $R \in \mathcal{D}' \mathcal{T}^1_3(M)$ of an $L^2_{loc}$-connection $\nabla$ (see \cite[Def.\ 3.3]{LeflochMardare} or \cite[Def.\ 2.2]{KunzingerOhanyanSchinnerlSteinbauer2022}) by the usual formula $R(X,Y)Z:=\nabla_X \nabla_Y Z - \nabla_Y \nabla_X Z - \nabla_{[X,Y]}Z$ (let us note here that the tensor distribution $R \in \mathcal{D}'\mathcal{T}^1_3(M)$ is understood as a $C^\infty(M)$-linear map $\mathfrak{X}(M)^3 \to \mathcal{D}' \mathcal{T}^1_0(M)$, in analogy with smooth tensors).

Let us turn to the setting of our main interest: If $g$ is a semi-Riemannian metric of regularity $C^1$ on a smooth manifold $M$, then its Levi-Civita connection $\nabla$ is a $C^0$-connection, from which it follows that $R \in \mathcal{D}^{'(1)}\mathcal{T}^1_3(M)$ \cite[p.\ 1150]{KunzingerOhanyanSchinnerlSteinbauer2022}. Due to this higher regularity, it was observed in the same reference that the object $R(X,Y,Z,W) = g(W,R(X,Y)Z)$ can be defined as an element of $\mathcal{D}^{'(1)}(M)$, even for $C^1$-vector fields $X,Y,Z,W$. This allows traces to be taken with respect to local $g$-orthonormal frames and coframes, which are of regularity $C^1$. Given an open set $U$, a $g$-orthonormal frame $E_i$ on $U$, then for all $C^1$-vector fields $X,Y$ on $U$, we define
\begin{align*}
    \Rc^g(X,Y):=\sum_{i=1}^{\dim M} g(E_i,E_i) g(R(E_i,X)Y,E_i).
\end{align*}
A partition of unity argument extends this definition to all $C^1$-vector fields $X,Y$ on $M$, from which the (global) Ricci curvature tensor $\Rc \in \mathcal{D}^{'(1)}\mathcal{T}^0_2(M)$ is obtained. 
Since we will consider weighted Ricci curvature bounds in this work, let us note that if $V \in C^1(M)$, then its distributional $g$-Hessian $\Hess V \in \mathcal{D}^{'(1)}\mathcal{T}^0_2(M)$ can be obtained following the analogous steps as for the Ricci curvature.

Given any local coordinate chart $(U,\varphi = (x^i))$ on a smooth manifold $M$ and $u \in \mathcal{D}^{'(k)}(M)$, the pushforward of $u|_U$, $\varphi_*u \in \mathcal{D}^{'(k)}(\varphi(U))$ is an ordinary $k$-th order distribution on $\varphi(U) \subset \R^n$, acting on a function $f \in C^k_c(U)$ via
\begin{align*}
    \varphi_*u(f) := u(f \circ \varphi \, |dx^1 \wedge \dots \wedge dx^n|).
\end{align*}
Similarly, if $T \in \mathcal{D}^{'(k)}\mathcal{T}^r_s(M)$, then $\varphi_*T$ is a tensor distribution on $\varphi(U)$ which can be written as
\begin{align*}
    \varphi_* T = T^{k_1\dots k_r}_{l_1\dots l_s} \frac{\partial}{\partial x^{k_1}} \otimes \dots \otimes \frac{\partial}{\partial x^{k_r}} \otimes dx^{l_1} \otimes \dots \otimes dx^{l_s},
\end{align*}
with $T^{k_1\dots k_r}_{l_1\dots l_s} \in \mathcal{D}^{'(k)}(\varphi(U))$.

For later use, let us recall the coordinate expressions of the Christoffel symbols, the Ricci curvature and the Hessian, obtained from a $C^1$-semi Riemannian metric $g$; these are simply the usual coordinate expressions which continue to be valid in this distributional regularity.
\begin{itemize}
\item $C^0$-Christoffel symbols:
\begin{equation}\label{Christoffel}
\smash{\Gamma_{ij}^k} = \frac12 g^{km}\left(\frac{\p g_{mj}}{\p x^i} + \frac{\p g_{im}}{\p x^j} -\frac{\p g_{ij}}{\p x^m}\right),
\end{equation}
\item Distributional Ricci curvature:
\begin{align}\label{Eq:RRRRRR}
\Rc^g_{ij} := \frac{\partial \Gamma_{ij}^m}{\partial x^m} - \frac{\partial\Gamma_{im}^m}{\partial x^j} + \Gamma_{ij}^m\,\Gamma_{km}^k - \Gamma_{ik}^m\,\Gamma_{jm}^k,
\end{align}
\item Distributional Hessian of $V \in C^1(M)$:
\begin{align}\label{Eq:HESS}
(\Hess V)_{ij} = \frac{\partial^2V}{\partial x^i\partial x^j} -\Gamma_{ij}^k\,\frac{\partial V}{\partial x^k}.
\end{align}
\end{itemize}

The distributional Bakry--\'Emery Ricci curvature $\Rc^{(g,N,V)}$, where $N\in [n,\infty)$, is now simply defined by \eqref{B-E Ric}. Here, we will use the conventions $\Rc^{(g,\infty,V)} := \Rc^g + \Hess\,V$ and $\Rc^{(g,n,V)} := \Rc^g$.

From now on let $g \in C^1$ be Lorentzian, i.e., of signature $(+,-,\dots,-)$. Given $K\in\R$ we say $\Rc^g \geq K$ (or more precisely $\Rc^g\geq K\,g$) timelike distributionally if the distribution $\Rc^g(X,X) -  K\,g(X,X) \in \mathcal{D}^{'(1)}(M)$ is nonnegative for every smooth (or equivalently $C^1$) timelike vector field $X$ on $M$. Analogous notions are adopted for the other curvature quantities. 

Let $(\rho_\varepsilon)_{\varepsilon > 0}$ be a family of  standard mollifiers in $\R^n$. By chartwise convolution and using  a partition of unity,  one can construct a family denoted by $\{g \star \rho_\varepsilon : \varepsilon > 0\}$ of $C^\infty$ Lorentzian metrics such that $g_\varepsilon \to g$ in $C^1_{loc}$ as $\varepsilon\to 0$ in a natural way, see e.g.\ \cite[Subsec.\ 2.2]{KunzingerOhanyanSchinnerlSteinbauer2022}. Similarly, one can convolve the distributional Bakry--\'Emery Ricci curvature $\smash{\Rc^{(g,N,V)}}$, which yields a family $\smash{\{\Rc^{(g,N,V)}\star\rho_\varepsilon : \varepsilon > 0\}}$ of smooth sections of the bundle $T^0_2M$ of valence $(0,2)$-tensors. As convolution preserves nonnegativity, this regularization process preserves lower bounds on the Bakry--\'Emery Ricci curvature in all timelike directions in the following way:
\begin{align*}
&\Rc^{(g,N,V)} \geq K\,g\quad\textnormal{timelike distributionally}\\
&\qquad\qquad \Longrightarrow\quad  \Rc^{(g,N,V)}\star \rho_\varepsilon \geq K\,g\star\rho_\varepsilon\quad\textnormal{pointwise}.
\end{align*}
To derive geometric statements from this regularization procedure, one would like to compare $\smash{\Rc^{(g,N,V)}\star \rho_\varepsilon}$ to $\smash{\Rc^{(g\star\rho_\varepsilon,N,V\star \rho_\varepsilon)}}$. Due to the non\-linear dependence of the Bakry--\'Emery Ricci tensor \eqref{Eq:RRRRRR} both on $g$ and $V$, these two quantities do not coincide in general. On the other hand, their error tends to zero locally uniformly: The case $V=0$ was established by Graf \cite[Lem.\ 4.5]{Graf20}, and her argument is easily seen to generalize to $\Rc^{(g,N,V)}$ since $V \in C^1(M)$.

The proposed approximation of $g$ will not respect the causal structure defined by $g$, which necessitates a modification of $g\star\rho_\varepsilon$ to a smooth Lorentzian metric $g_\varepsilon$ whose causal structure is comparable to the one of $g$. This was done by Kunzinger--Steinbauer--Stojkovi\'c--Vickers \cite{KunzingerSteinbauerStojkovicVickers2015} in regularity $g\in C^{1,1}$ and by Graf \cite{Graf20} in regularity $g\in C^1$ (following earlier work of Chru\'sciel--Grant \cite{ChruscielGrant}). We report their main result adapted to our weighted setting in Lemma~\ref{Le:Good approx} below (see also Braun--Calisti \cite{BraunCalisti2024}).

To formulate it, we  define $V_\varepsilon := V \star\rho_\varepsilon$ (understood with respect to the same partition of unity used to construct $g\star\rho_\varepsilon$). Given two not necessarily smooth Lorentzian metrics $g$ and $g'$, we write $g'\prec g$ if every $g'$-causal tangent vector is $g$-timelike (viz.~$g'$ has strictly narrower light cones than $g$). Given a continuous section $S$ of the vector bundle $T^0_2M$ and a compact set $C\subset M$, we define
\begin{align*}
\Vert S\Vert_{\infty,C} := \sup_{x\in C} \sup_{\substack{v\in T_xM,\\\tilde{g}(v,v)=1}} \vert S(v,v)\vert
\end{align*}
and analogously if $S$ is a continuous section of $T^0_3M$. 
Here $\tilde g$ denotes any complete smooth (background) Riemannian metric on $M$ whose topology is second countable 
\cite{NomizuOzeki61} --- considered fixed hereafter --- and
$\tilde{\nabla}$ its Levi-Civita connection. 

\begin{lemma}[Good approximation]\label{Le:Good approx} There exists a family $\{g_\varepsilon :\varepsilon > 0\}$ of smooth Lorentzian metrics, time oriented by the same vector field as $g$, such that $g_\varepsilon\prec g$ for every $\varepsilon > 0$ and $g_\varepsilon\to g$ in $\smash{C_{loc}^1}$ as $\varepsilon \to 0$. The latter means for every compact subset $C\subset M$,
\begin{align*}
\lim_{\varepsilon\to 0} \big[\Vert g_\varepsilon - g\Vert_{\infty,C} + \Vert \tilde{\nabla}g_\varepsilon - \tilde{\nabla}g\Vert_{\infty,C}\big] = 0.
\end{align*}

Moreover, if $\Rc^{(g,N,V)}\geq K$ timelike distributionally, the above family can be constructed with the following property. For every compact set $C\subset M$ and every $c,\delta,\kappa >0$, there exists $\varepsilon_0 > 0$ such that for every $\varepsilon\in (0,\varepsilon_0)$ and every $\smash{v\in TM\big\vert_C}$ with $g(v,v)\geq \kappa$ and $\tilde{g}(v,v) \leq c$, 
\begin{align*}
\Rc^{(g_\varepsilon,N,V_\varepsilon)}(v,v) \geq (K-\delta)\,g_\varepsilon(v,v).
\end{align*}
\end{lemma}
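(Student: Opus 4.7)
My plan is to follow the strategy introduced by Chru\'sciel--Grant~\cite{ChruscielGrant} and carried out in the unweighted $C^1$-case by Graf~\cite[\S4]{Graf20}, extending her curvature comparison to accommodate the weight $V$. For the existence of the family $\{g_\varepsilon\}$ itself, start from a chartwise convolution: using a locally finite smooth atlas and subordinate partition of unity, produce $g\star\rho_\varepsilon \in C^\infty$ with $g\star\rho_\varepsilon \to g$ in $C^1_{loc}$. To enforce strict cone narrowing, set $g_\varepsilon := g\star\rho_\varepsilon - \eta(\varepsilon)\,\tilde g$, where $\eta(\varepsilon)>0$ decays with $\varepsilon$ but dominates the $C^0$-error $\Vert g\star\rho_\varepsilon - g\Vert_{\infty,C}$ in operator norm on each compact $C$; then for any nonzero $v$ with $g_\varepsilon(v,v)\geq 0$ one has $g(v,v) \geq \eta(\varepsilon)\,\tilde g(v,v) > 0$, i.e.\ $g_\varepsilon \prec g$. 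Since $\eta(\varepsilon)\to 0$ and $\tilde g$ is smooth, $C^1_{loc}$-convergence $g_\varepsilon \to g$ persists, and the Lorentzian signature together with the time orientation given by $F$ are preserved for small $\varepsilon$ by continuity.

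For the weighted curvature bound, I would proceed in three steps. First, by the definition of timelike distributional nonnegativity in Definition~\ref{D:BEC}, the distribution $[\Rc^{(g,N,V)} - K\,g](X,X) \in \Dis^{'(1)}(M)$ is nonnegative for each smooth timelike vector field $X$; since convolution commutes with the distributional pairing against test volume densities, the smooth $(0,2)$-tensor $\Rc^{(g,N,V)}\star\rho_\varepsilon - K\,g\star\rho_\varepsilon$ is pointwise nonnegative on every timelike tangent vector at every point. Second, I compare $\Rc^{(g_\varepsilon,N,V_\varepsilon)}$ with $\Rc^{(g,N,V)}\star\rho_\varepsilon$. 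From the coordinate expressions \eqref{Eq:RRRRRR}--\eqref{Eq:HESS}, these differ only through products in which both factors are merely continuous, namely products of Christoffel symbols $\Gamma \in C^0$, products $\Gamma \cdot \partial V$, and products $\partial V \otimes \partial V$; the linear second-derivative contributions commute with convolution exactly. For any $f,h \in C^0_{loc}(M)$ one has, locally uniformly,
\[
(f\star\rho_\varepsilon)(h\star\rho_\varepsilon) - (fh)\star\rho_\varepsilon \to 0,
\]
because both summands converge to $fh$. Applying this term by term and absorbing the smooth $\eta(\varepsilon)\,\tilde g$ correction yields $\Vert \Rc^{(g_\varepsilon,N,V_\varepsilon)} - \Rc^{(g,N,V)}\star\rho_\varepsilon\Vert_{\infty,C} \to 0$ on every compact $C$. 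This is exactly Graf's Lemma~4.5 in the unweighted case, and extends verbatim to $\Hess V$ and $dV\otimes dV$ using $V \in C^1$.

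Third, I assemble. Fix $C \subset M$ compact and $c,\delta,\kappa > 0$. On the compact subset of $TM|_C$ defined by $g(v,v)\geq\kappa$ and $\tilde g(v,v)\leq c$, the $C^0$-convergence $g_\varepsilon, g\star\rho_\varepsilon \to g$ gives $g_\varepsilon(v,v) \geq \kappa/2$ and $|g_\varepsilon(v,v) - g\star\rho_\varepsilon(v,v)| \to 0$ uniformly. Combining the first step (nonnegativity of $\Rc^{(g,N,V)}\star\rho_\varepsilon - K\,g\star\rho_\varepsilon$ on $v$) with the second (uniform comparison of Ricci tensors) and with this uniform metric control, one obtains $\Rc^{(g_\varepsilon,N,V_\varepsilon)}(v,v) - K\,g_\varepsilon(v,v) \geq -\zeta(\varepsilon,C,c,\kappa)$ with $\zeta \to 0$; dividing by $g_\varepsilon(v,v) \geq \kappa/2$ and absorbing the remainder into $\delta$ yields the stated inequality for $\varepsilon \in (0,\varepsilon_0)$.

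The main obstacle is the curvature comparison in the second step: the map $g \mapsto \Rc^g$ is nonlinear, so convolution does not intertwine with it on the nose, and the same issue arises in $\Hess V$ and $dV\otimes dV$. It is essential here that the hypotheses are exactly $g,V \in C^1$, so that all nonlinear terms reduce to products of continuous functions, for which the elementary commutator estimate above holds; with merely $L^\infty$ or $L^2_{loc}$ first derivatives this scheme would break down.
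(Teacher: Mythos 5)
Your proposal follows essentially the same route as the paper, which does not prove Lemma~\ref{Le:Good approx} from first principles but reports the construction of Chru\'sciel--Grant \cite{ChruscielGrant}, Kunzinger--Steinbauer--Stojkovi\'c--Vickers \cite{KunzingerSteinbauerStojkovicVickers2015} and Graf \cite{Graf20}, together with the observation that Graf's curvature commutator estimate \cite[Lem.~4.5]{Graf20} extends to the weighted tensor because $V\in C^1(M)$. Your three steps --- chartwise mollification plus cone-narrowing, preservation of nonnegativity under convolution, and the commutator estimate assembled on the uniformly timelike compact set $\{g(v,v)\ge\kappa,\ \tilde g(v,v)\le c\}$ --- are exactly the ingredients the paper relies on.

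One point in your second step is misstated and would fail if taken literally. The discrepancy between $\Rc^{(g_\varepsilon,N,V_\varepsilon)}$ and $\Rc^{(g,N,V)}\star\rho_\varepsilon$ is \emph{not} exhausted by products of continuous functions: the terms $\partial_m\Gamma^m_{ij}-\partial_j\Gamma^m_{im}$ in \eqref{Eq:RRRRRR} are nonlinear in $g$ (through the inverse metric appearing in \eqref{Christoffel}) and contain $g^{km}\,\partial^2 g$, i.e.\ the product of a $C^1$ coefficient with a genuine first-order distribution. These contributions are neither ``linear'' nor do they commute with convolution, and the elementary estimate $(f\star\rho_\varepsilon)(h\star\rho_\varepsilon)-(fh)\star\rho_\varepsilon\to 0$ for continuous $f,h$ does not reach them; what is needed is a Friedrichs-type commutator lemma of the form $f\cdot(T\star\rho_\varepsilon)-(fT)\star\rho_\varepsilon\to 0$ locally uniformly for $f\in C^1$ and $T$ a distribution of order one, which is precisely the technical content of \cite[Lem.~4.5]{Graf20}. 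Since you do invoke that lemma, the conclusion stands, but the justification you offer for it is not the right mechanism. Two smaller remarks: on a non-compact $M$ a single constant $\eta(\varepsilon)$ cannot in general dominate $\Vert g\star\rho_\varepsilon-g\Vert_{\infty,C}$ simultaneously over all compacta, so the subtracted multiple of $\tilde g$ (or the mollification scale) must be allowed to vary over an exhaustion, as in the cited constructions; and the pointwise nonnegativity of $\Rc^{(g,N,V)}\star\rho_\varepsilon-K\,g\star\rho_\varepsilon$ on timelike vectors is only available uniformly after restricting to the set $\{g(v,v)\ge\kappa,\ \tilde g(v,v)\le c\}$ and shrinking $\varepsilon$ accordingly (a constant coordinate extension of a barely timelike $v$ need not stay timelike on the convolution ball) --- your third step effectively supplies this restriction, so this is a matter of phrasing rather than substance.
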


We will frequently use the approximation $g_\varepsilon$ and will refer to it as the \textit{good approximation} of $g$.

\begin{remark}[Heredity of global hyperbolicity]
\label{R:heredity of gh}
{\rm If $g$ is globally hyperbolic, and $g' \prec g$ has narrow lightcones, our definition shows $g'$ to be globally hyperbolic, as in e.g.\ Benavides-Navarro--Minguzzi \cite{NavarroMinguzzi} or Graf \cite[Rem.~1.1]{Graf20}. 
In particular, each member $g_\varepsilon$ of the good approximation is globally hyperbolic if $g$ is.} \fr
\end{remark}

\section{\texorpdfstring{Ray-adapted curves $\sigma$ and $C^1$-compactness of geodesics}{Ray-adapted curves sigma and C1-compactness of geodesics}}\label{Section: TLCRC}

Let $I\subset \R$ be an interval and $\gamma:I\to M$  a Lipschitz causal curve. By definition of time separation we have
\begin{equation}
\label{eq:elllength}\ell(\gamma_s,\gamma_t)\geq \int_s^t|\gamma'_r|\, dr\qquad\forall s,t\in I, \ s\leq t,
    \end{equation}
where here $|v|:=\sqrt{g(v,v)}$ for any $v$ future directed. We say that $\gamma$ is a {\bf maximizer} provided equality holds in \eqref{eq:elllength} for any $t,s$ as in there or, equivalently, if
\begin{equation}
\label{eq:eqmax}
\ell(\gamma_s,\gamma_t)=\ell(\gamma_s,\gamma_r)+\ell(\gamma_r,\gamma_t)\qquad\forall s,r,t\in I,\ s\leq r\leq t.
\end{equation}
A {\bf $g$-geodesic} is a $C^2$ curve $\gamma$ solving the geodesic equation 
$\nabla_{\gamma'}^g\gamma'=~0$. 
We recall 
that by \cite[Thm.\ 1.1]{LangeLytchakSaemann} any Lipschitz maximizer has a parametrization making it a $g$-geodesic (since for $C^1$-metrics Filippov solutions are classical solutions).
More generally, \cite{McCann24}  
yields a closely related criterion for a priori discontinuous curves to be locally Lipschitz and maximizing --- hence $C^2$-smooth $g$-geodesics:

\begin{lemma}[Criterion for timelike maximizers and their regularity]\label{le:regularityadapted} 
In a globally hyperbolic $C^1$-spacetime,
any --- not necessarily continuous --- map $\sigma:[0,1] \longrightarrow M$ satisfying
\begin{equation}\label{l-path}
\ell(\sigma_s,\sigma_t) = (t-s) \ell(\sigma_0,\sigma_1)>0
\qquad \forall 0 \le s< t \le 1
\end{equation}
is actually a $C^2$-smooth maximizing $g$-geodesic.
\end{lemma}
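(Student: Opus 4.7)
\medskip

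\noindent\emph{Proof proposal.} Set $L := \ell(\sigma_0,\sigma_1) > 0$ and let $\mathcal{M}$ denote the set of maximizing $g$-geodesics $\eta:[0,1] \to M$ from $\sigma_0$ to $\sigma_1$, affinely parametrized so that $g(\eta'_t, \eta'_t) \equiv L^2$ on $[0,1]$. The plan is to realize $\sigma$ as an element of $\mathcal{M}$ via a finite intersection property (FIP) argument inside $\mathcal{M}$, cast as a compact topological space.

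First I would verify that $\mathcal{M}$ is compact in the uniform topology on $C([0,1],M)$. Every $\eta \in \mathcal{M}$ lies in the compact causal diamond $K := J^+(\sigma_0) \cap J^-(\sigma_1)$ furnished by global hyperbolicity; the collection of future-directed causal curves in $K$ joining $\sigma_0$ and $\sigma_1$ is $C^0$-precompact (after $\tg$-arclength reparametrization) by the standard limit-curve theorem, which remains valid in $C^1$-regularity, cf.\ \cite{Saemann16}. Within this compact set, $\mathcal{M}$ is picked out by requiring Lorentzian length $\Len = L$, a condition preserved under uniform limits by upper semicontinuity of $\Len$ together with the upper bound $\Len(\eta) \le \ell(\sigma_0,\sigma_1) = L$; moreover, the passage from a $\tg$-arclength parametrization to the constant-$g$-speed-$L$ parametrization is continuous, so $\mathcal{M}$ is closed, hence compact. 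For each $t \in [0,1]$ the level set
\[
  \mathcal{M}_t := \{\eta \in \mathcal{M} : \eta_t = \sigma_t\} \subset \mathcal{M}
\]
is closed as the preimage of the point $\sigma_t$ under the continuous evaluation $\eta \mapsto \eta_t$.

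Next, I would establish the FIP for $(\mathcal{M}_t)_{t \in [0,1]}$ by a direct concatenation construction. Given $0 \le t_1 < \cdots < t_k \le 1$, enlarge $\{0,t_1,\ldots,t_k,1\}$ to a partition $\{0 = s_0 < s_1 < \cdots < s_{m+1} = 1\}$. For each $j$, since $\ell(\sigma_{s_j}, \sigma_{s_{j+1}}) = (s_{j+1}-s_j)L > 0$, global hyperbolicity supplies a maximizing $g$-geodesic from $\sigma_{s_j}$ to $\sigma_{s_{j+1}}$; parametrize it on $[s_j, s_{j+1}]$ at constant $g$-speed $L$ and concatenate to obtain a Lipschitz causal curve $\eta^P:[0,1] \to M$ of total Lorentzian length $\sum_j (s_{j+1}-s_j)L = L = \ell(\sigma_0,\sigma_1)$. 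As a Lipschitz maximizer, $\eta^P$ admits by \cite[Thm.~1.1]{LangeLytchakSaemann} a parametrization making it a $C^2$ $g$-geodesic; because $\eta^P$ already has constant $g$-speed $L$ almost everywhere and the constant-$g$-speed-$L$ parametrization on $[0,1]$ from $\sigma_0$ to $\sigma_1$ is the unique one of its kind, this provided parametrization of $\eta^P$ is itself $C^2$, so $\eta^P \in \mathcal{M}$. By construction $\eta^P_{t_i} = \sigma_{t_i}$ for every $i$, whence $\eta^P \in \bigcap_{i=1}^k \mathcal{M}_{t_i}$.

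Finally, compactness of $\mathcal{M}$ combined with the FIP yields $\eta \in \bigcap_{t \in [0,1]} \mathcal{M}_t$, meaning $\eta_t = \sigma_t$ for every $t \in [0,1]$, so $\sigma = \eta$ is a $C^2$-smooth maximizing $g$-geodesic. The main obstacle I anticipate is the verification that the constant-$g$-speed-$L$ piecewise parametrization of the concatenation $\eta^P$ coincides with the $C^2$ parametrization produced by \cite{LangeLytchakSaemann}, so that $\eta^P$ is genuinely an unbroken $C^2$ geodesic rather than a broken one; closely related is the compactness of $\mathcal{M}$ itself in the $C^1$-setting, both being reductions to structural results in low-regularity Lorentzian geometry that have been established in the literature cited in the excerpt.
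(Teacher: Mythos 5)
Your argument is correct in outline but takes a genuinely different route from the paper. The paper's proof is essentially a two-line delegation: it observes that a globally hyperbolic $C^1$-spacetime is a regularly localizable, $\mathcal K$-globally hyperbolic Lorentzian length space, invokes \cite{McCann24} to conclude that any map satisfying \eqref{l-path} becomes Lipschitz and maximizing after a homeomorphic reparametrization, and then applies \cite{LangeLytchakSaemann} plus the observation that \eqref{l-path} forces constant speed, so no reparametrization was needed. You instead give a self-contained construction: realize the candidate geodesic as a point in the compact space $\mathcal M$ of constant-speed maximizers from $\sigma_0$ to $\sigma_1$ lying in every level set $\mathcal M_t$, with the finite intersection property supplied by concatenating maximizers between consecutive points of a partition and noting that a concatenation of total length $\ell(\sigma_0,\sigma_1)$ is itself an unbroken maximizer. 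This buys independence from the synthetic Lorentzian-length-space machinery at the cost of a compactness verification the paper never needs.

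That verification is the one place where you are too quick, and it is worth spelling out because the analogous issue occupies most of the proof of Lemma~\ref{le:C1precomp}(iii) in the paper (which you must not invoke, since it is proved \emph{using} the present lemma). The limit curve theorem gives $C^0$-compactness of causal curves in $J^+(\sigma_0)\cap J^-(\sigma_1)$ only in the $\tilde g$-arclength parametrization; curves in the constant-$g$-speed-$L$ parametrization need not have an a priori uniform $\tilde g$-Lipschitz bound, since $\{v: g(v,v)=L^2\}$ is unbounded over a compact set. The clean fix is to note that membership in $\mathcal M$ is equivalent to being a maximizer $\eta$ from $\sigma_0$ to $\sigma_1$ with $\ell(\eta_0,\eta_t)=tL$ for all $t$, and that for a $\tilde g$-arclength parametrized maximizer the reparametrizing function is $\tau(s)=\ell(\eta_0,\eta_s)/L$: if $\eta^n\to\eta$ uniformly, then $\tau_n\to\tau$ uniformly (monotone functions converging pointwise, by continuity of $\ell_+$ under global hyperbolicity, to a continuous limit), and $\tau$ is a strictly increasing homeomorphism because the uniform limit is again a maximizer of positive length, hence a timelike geodesic by \cite{LangeLytchakSaemann} with nowhere-vanishing velocity. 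This yields uniform convergence of the constant-speed parametrizations and hence the compactness of $\mathcal M$ and closedness of each $\mathcal M_t$ that your finite-intersection argument requires. With that step filled in, the rest of your proof — the concatenation having no corners because it is a maximizer, and the uniqueness of the constant-speed parametrization identifying it with the affine ($C^2$) one — is sound.
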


\begin{proof}
A globally hyperbolic $C^1$-spacetime is an example of a regularly localizable \cite[Cor. 2.4]{Graf20} 
and $\mathcal K$-globally hyperbolic 
\cite[Cor.\ 3.4]{Saemann16}
Lorentzian length space \cite{KunzingerSaemann18}.
Thus \cite[Lem. 2 and Cor. 6]{McCann24} assert any curve
satisfying \eqref{l-path}
becomes Lipschitz and maximizing after a 
(homeomorphic) reparameterization.
Now  \cite[Thm.\ 1.1]{LangeLytchakSaemann} ensures that there is a reparametrization making it a $g$-geodesic (hence $C^2$ by the geodesic equation \eqref{geodesic eqn}); but \eqref{l-path} also yields that $|\sigma'_t|$ is constant, so neither reparametrization was actually needed.
\end{proof}

The following simple compactness properties will be useful throughout. While (i) and (ii) are adaptations of well-known facts to our $C^1$ setting,  (iii) is more subtle: it depends on the continuity of the timelike curves described by the preceding lemma.

\begin{lemma}[Local existence and $C^1$ compactness of geodesics]\label{le:C1precomp}
Let $(M,g)$ be a globally hyperbolic $C^1$-spacetime with compact sets $K\subset TM$ and $\tilde K \subset \{\ell>0\}$. Then:
\begin{itemize}
\item[(i)] There is ${\lambda}>0$ such that for any $v\in K$ there is a $g$-geodesic defined on $[0,{\lambda}]$ having $v$ as initial velocity.
\item[(ii)] The set of $g$-geodesics on $[0,{\lambda}]$ with initial velocity in $K$ is compact in $C^1([0,{\lambda}])$. Same for the subset of those that are also future directed maximizers.
\item[(iii)]
The collection of maximizing $g$-geodesics $\sigma:[0,1]\longrightarrow M$ with endpoints $(\sigma_0,
\sigma_1) \in \tilde K$ is compact in $C^1([0,1],M)$.
\end{itemize}    
\end{lemma}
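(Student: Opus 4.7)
For part (i), I would reduce to local coordinates, where the geodesic equation \eqref{geodesic eqn} becomes a first-order ODE system in $(\gamma,\dot\gamma)$ with $C^0$ right-hand side (the Christoffel symbols \eqref{Christoffel}). Peano's theorem yields local existence. Covering the compact projection $\pi(K)\subset M$ by finitely many charts on which the $\Gamma^k_{ij}$ are bounded, and using the bound on $|v|$ inherited from compactness of $K$, standard a priori estimates produce a common lower bound $\lambda>0$ on the existence time.

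For part (ii), the geodesic equation $\ddot\gamma^i=-\Gamma^i_{jk}(\gamma)\,\dot\gamma^j\dot\gamma^k$, combined with the uniform bound on $\dot\gamma$ from compactness of $K$ and the uniform bound on $\Gamma$ over the compact image of the geodesic family, yields equi-$C^2$ estimates on $[0,\lambda]$. Arzel\`a--Ascoli produces a $C^1$-convergent subsequence, and the continuity of $\Gamma$ ensures the limit still solves \eqref{geodesic eqn}. For the future-directed maximizer subclass, both properties are closed under $C^1$-limits: $g(\dot\gamma,F)\geq 0$ is a closed condition, while the maximizer identity $\ell(\gamma_0,\gamma_t)=\int_0^t|\dot\gamma_r|\,dr$ survives by upper semicontinuity of $\ell$, continuity of $\ell_+$, and $C^1$-convergence of the integrand.

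For part (iii), the central ingredient is Lemma~\ref{le:regularityadapted}. Compactness of $\tilde K\subset\{\ell>0\}$ together with continuity of $\ell_+$ bounds the common $g$-speed $|\dot\sigma_n|_g=\ell(\sigma_n(0),\sigma_n(1))$ in some interval $[c_1,c_2]$ with $c_1>0$, while global hyperbolicity confines every $\sigma_n$ to the compact causal diamond $K^*:=J^+(\mathrm{pr}_1\tilde K)\cap J^-(\mathrm{pr}_2\tilde K)$. Standard Lorentzian causality arguments then provide a uniform $\tilde g$-Lipschitz estimate for the $\sigma_n$, so Arzel\`a--Ascoli extracts a $C^0$-convergent subsequence $\sigma_n\to\sigma$. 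Upper semicontinuity of $\ell$, closedness of causal relations in globally hyperbolic spacetimes, and the reverse triangle inequality applied to the maximizer identity $\ell(\sigma_n(s),\sigma_n(t))=(t-s)\ell(\sigma_n(0),\sigma_n(1))$ force the limit $\sigma$ to satisfy the $\ell$-path condition \eqref{l-path} with positive right-hand side. Lemma~\ref{le:regularityadapted} then promotes $\sigma$ to a $C^2$-smooth maximizing $g$-geodesic. The $C^1$-upgrade follows by bootstrapping: the uniform bound on $\dot\sigma_n$ together with the geodesic equation yields equi-$C^2$ estimates, and Arzel\`a--Ascoli applied chart-by-chart raises the convergence to $C^1$ on $[0,1]$.

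The hard part is the uniform $\tilde g$-Lipschitz bound for the $\sigma_n$. A $g$-timelike vector with fixed $g$-norm can approach the null cone and become unbounded in $\tilde g$-norm, so the two-sided bound $c_1\leq|\dot\sigma_n|_g\leq c_2$ does not by itself suffice. It is precisely the strict lower bound $c_1>0$ --- guaranteed by the hypothesis $\tilde K\subset\{\ell>0\}$ via continuity of $\ell_+$ on the compact set $\tilde K$ --- that prevents maximizers from degenerating toward null directions, and in combination with confinement to $K^*$ furnishes the required uniform Lipschitz estimate, thereby enabling the limit-curve extraction that distinguishes (iii) from the essentially routine statements (i) and (ii).
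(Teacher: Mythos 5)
Parts (i) and (ii) of your proposal follow the paper's argument essentially verbatim (Peano plus compactness for existence, equi-$C^2$ bounds plus Arzel\`a--Ascoli for compactness, closedness of the maximizer property under $C^1$ limits), and are fine.

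In part (iii) there is a genuine gap, and it sits exactly where you flag ``the hard part.'' You correctly observe that the two-sided bound $c_1\le|\dot\sigma_n|_g\le c_2$ with $c_1>0$ does not control $|\dot\sigma_n|_{\tilde g}$, because a vector of fixed positive $g$-norm can slide toward the null cone with $\tilde g$-norm tending to infinity --- and then you assert, without any mechanism, that this same bound ``in combination with confinement to $K^*$ furnishes the required uniform Lipschitz estimate.'' That is circular: confinement to a compact diamond bounds the \emph{positions}, not the velocities, and the constancy of the $g$-speed along each affinely parametrized geodesic says nothing about its $\tilde g$-speed. The standard causality toolkit (limit curve theorems, uniform bounds on $\tilde g$-arclength of causal curves in a compact non-imprisoning set) yields at best compactness after reparametrization by $\tilde g$-arclength, i.e.\ an $L^1$-type bound $\int_0^1|\dot\sigma_n|_{\tilde g}\,dt\le L$; converting this into the pointwise bound you need is not standard and, with only $C^0$ Christoffel symbols, does not follow from a Gronwall argument either. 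So the uniform Lipschitz estimate you use to launch Arzel\`a--Ascoli is unproven, and with it the whole $C^1$-upgrade.

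The paper resolves this in the opposite order. It first extracts a (merely a.e.\ pointwise, left-continuous) limit $\sigma$ via the limit curve theorem, verifies the $\ell$-affinity condition \eqref{l-path} for $\sigma$, and invokes Lemma~\ref{le:regularityadapted} to conclude that $\sigma$ is a \emph{continuous} timelike maximizer with everywhere pointwise convergence $\sigma^n_t\to\sigma_t$. Only then does it bound the initial velocities $v_n=(\sigma^n)'_0$, by ruling out two degenerations: $v_n\to0$ contradicts pointwise convergence to the timelike $\sigma$ via part (ii); and $|v_n|_{\tilde g}\to\infty$ is excluded by rescaling time by $\alpha_n=|v_n|_{\tilde g}$, obtaining a nontrivial causal limit geodesic $\eta$ emanating from $\sigma_0$ that the $\sigma^n$ traverse in parameter intervals shrinking to $0$, which forces $\sigma_t\ge\eta_1\ne\eta_0$ for all $t>0$ and contradicts the continuity of $\sigma$ at $t=0$. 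This rescaling-plus-continuity argument is the actual content of (iii); your proposal needs it (or an equivalent substitute) to close the Lipschitz gap, after which your bootstrapping to $C^1$ goes through.
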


\begin{proof} 
(i) By compactness we can easily reduce to the case in which  the projection $\pi^M( K)$ of $ K$ in $M$ is contained in a single coordinate chart. Then, read in coordinates the initial velocities have uniformly bounded components, thus the existence of ${\lambda}>0$ such that the geodesic equation \eqref{geodesic eqn} is solvable in $[0,{\lambda}]$ for any initial datum in $K$ is a direct consequence of Peano's theorem \cite[\S 6.10]{BirkhoffRota89}, the compactness of $K$ and continuity of the $\Gamma^i_{jk}$'s.

\noindent (ii) The same argument also ensures that $g$-geodesics $\gamma$ with $\gamma'_0\in K$ have  second derivative uniformly bounded in $[0,{\lambda}]$. Precompactness in $C^1([0,{\lambda}])$ follows. For compactness  notice that any $C^1$-limit of solutions of \eqref{geodesic eqn} is a weak solution of the same equation; moreover it is clear, by simple ODE regularity, that  weak solutions are actually strong solutions.

Compactness of maximizers also follows taking into account that  limits of maximizers are maximizers, as is clear from the characterization in \eqref{eq:eqmax}.

{
(iii) Let $\sigma^n:[0,1]\longrightarrow M$ 
be a sequence of maximizing $g$-geodesics with endpoints $(\sigma^n_0,\sigma^n_1) \in\tilde K$.  Compactness of $\tilde K$ provides a (nonrelabelled) subsequence such that
$(x,y) := \lim_n (\sigma^n_0,\sigma^n_1)$ converges. We claim a further subsequence of the $\sigma^n$ converge in $C^1([0,1])$. By the limit curve theorem (and without reparametrizing w.r.t.\ $\tilde g$-arclength, e.g.\ as in \cite[Thm.\ 2.30]{BeranOctet24+}), we know that there is a left continuous curve $\sigma$ such that ---
along a nonrelabelled subsequence ---  $\lim_{n \to\infty} \sigma^n_t = \sigma_t$ for a.e.\ $t \in [0,1]$. Since the $\sigma^n$'s are maximizers with $\lim_n\ell(\sigma^n_0,\sigma^n_1)=\ell(x,y)>0$, they are eventually timelike maximizers.

Thus for a.e.\ $s<t$ we have
\[
\ell(\sigma_s,\sigma_t)=\lim_n \ell(\sigma^n_s,\sigma^n_t)=(t-s)\lim_n\ell(\sigma^n_0,\sigma^n_1)=(t-s)\ell(x,y).
\]
Then, by left continuity we see that $\ell(\sigma_s,\sigma_t)=(t-s)\ell(x,y)$  holds for all $s<t$. Also, by the limit curve theorem we see that $\sigma_0=\lim_n\sigma_0^n$ and there is a sequence $t_n\nearrow 1$ in $[0,1]$ such that $\sigma_1=\lim_{n\to\infty}\sigma_{t_n}$. In particular, by closedness of the causal relation $\leq$, we obtain $\sigma_1\leq y$. Moreover, we have
\[
\ell(x,y)=\ell(\sigma_0,y)\geq\ell(\sigma_0,\sigma_1)+\ell(\sigma_1,y) = \ell(x,y)+\ell(\sigma_1,y).
\]
Finally, a cutting the corner argument shows $\sigma_1=y$. Applying Lemma~\ref{le:regularityadapted} yields that $\sigma$ is a $C^2$ maximizer from $x$ to $y$. In particular $\sigma$ is continuous and we have that $\sigma_t^n\to\sigma_t$ holds for all $t\in[0,1]$.

We need to improve this pointwise convergence to $C^1$ convergence. 
We shall prove $\sigma'_0 = \lim_{n\to\infty} (\sigma^n)'_0$.  After that part (ii) asserts $C^1$ convergence of $\sigma^n$  to $\sigma$ in a neighbourhood of $t=0$, and trivial adaptation to arbitrary initial (and final, using time reversal) times $t \in [0,1]$ together with compactness of the unit interval completes the proof of (iii).
Therefore, define $v_n:=(\sigma^n)'_0$.

To derive a contradiction suppose $v_n\to 0$. 
Then part (ii) of the lemma shows that for some $\lambda>0$ the curves $(\sigma^n)$ converge in $C^1([0,\lambda])$ to a curve $\eta$ with $\eta'_0=0$; but this cannot happen because we already established that the sequence converges pointwise to $\sigma$ with $\sigma'_0\neq 0$ (because $\sigma$ is timelike).

If, on the other hand $\alpha_n:=|v_n|_{\tilde g} \to \infty$, then $\tilde v_n:=\alpha_n^{-1}v_n$ converge along a (nonrelabelled) subsequence to a non-zero vector $\tilde v$.  Part (ii) of the lemma provides $\lambda>0$ such that
the rescaled curves $\tilde\sigma^n:[0,\alpha_n]\to M$ defined by $\tilde\sigma^n_t:=\sigma^n_{t/\alpha_n}$ must --- up to further subsequences --- converge in $C^1([0,\lambda])$ to some $\eta$. The limit $\eta$ must be a causal curve with $\eta'_0=\tilde v\neq 0$, thus $\eta_1\geq\eta_0$ with $\eta_1\neq\eta_0$; we can therefore find $z\in M$ such that $x=\eta_0\in I^-(z)$ and $\eta_1\notin I^-(z).$ Now fix $t\in (0,1]$ and notice that for $n$ sufficiently large we have $\alpha_nt\geq 1$ and thus $\sigma^n_t=\tilde\sigma^n_{\alpha_nt}\geq \tilde \sigma_1$. Passing to the limit in $n$ we get $\sigma_t\geq \eta_1$ so $\sigma_t\notin I^-(p)$. Since $t\in (0,1]$ was arbitrary, we have contradicted the continuity of $\sigma_t$ at $t=0$.

The preceding (two) paragraphs imply a subsequence of the $v_n$ must converge to a (nonzero) limit $v$.  For some $\lambda>0$ part (ii) of the lemma yields $C^1([0,\lambda))$ convergence 
of $\sigma^n$ along a further subsequence,  to a limit $\eta$.  But this limit must agree with the pointwise limit $\sigma$; hence $v=\eta'_0=\sigma'_0$.  Since this limit is independent of which subsequences have been chosen along the way,  we have convergence $\sigma'_0=\lim_n(\sigma^n)'_0$ along the original (pointwise convergent) sequence, to complete the proof of (iii).
}
\end{proof}

Given a future directed curve $\gamma:[0,a)\to M$ and $a\in(0,\infty]$, one consequence of global hyperbolicity is that subsequential convergence of $\lim_{t\uparrow a}\gamma_t$ implies convergence of the full limit. If no such limit exists, we say that $\gamma$ is {\bf inextendible}. If $\gamma$ is Lipschitz with $\int_0^a|\gamma'_t|_{\tilde g}\,dt=+\infty$, then we say that it is {\bf (future) complete}. 
Note that any complete f.d. curve is necessarily inextendible.

A future {\bf   ray}  $\gamma:[0,\infty) \to M$ is a future directed curve that is inextendible and maximizing. For such a ray, we define  $I^-(\gamma):=\cup_{t\geq 0}I^-(\gamma_t)$.

`Past' notions are defined analogously.  

We turn to the important concept of co-ray. Notice that in the statement below as well as through the rest of the text, when we assert precompactness in $C^1([0,a])$ or  $C^1_{loc}([0,a))$ of a collection of curves parametrized over intervals as long or longer than $[0,a)$,  we implicitly mean precompactness of their restrictions to the indicated domains.

\begin{lemma}[Co-ray existence and 
dichotomy]\label{le:corays} Let $(M,g)$ be a globally hyperbolic $C^1$-spacetime,   $\gamma:[0,\infty)\to M$  a future complete {timelike} ray and $\eta^n:[0,a_n]\to M$, $n\in\mathbf N$,  a  timelike, maximizing $g$-geodesic with endpoint $\eta^n_{a_n}=\gamma_{t_n}$ for some $t_n\uparrow+\infty$. Assume that $\lim_{n\to\infty} \eta^n_0 = z\in I^-(\gamma)$ and that $n\mapsto  (\eta^n)'_0\in TM$ admits a non-zero limit $v$.
Then $a_n\to\infty$ and $(\eta^n)$ is precompact in  $C^1_{loc}([0,a),M)$ for some $a>0$.

Moreover, there exist ${\lambda}\in(0,\infty]$ and a $C^1_{loc}([0,{\lambda}),M)$-limit $\eta$ of some subsequence so that for no ${\lambda}'>{\lambda}$ is there a  further subsequence that converges in $C^1_{loc}([0,{\lambda}'),M)$.  Then $\eta$ is a ray --- called a co-ray of $\gamma$ ---  and exactly one of the following holds:
\begin{itemize}
    \item[i)] $\eta'_t$ is timelike for every $0 \le t <\lambda$, or
    \item[ii)] $\ell(\eta_s,\eta_t)=0$ for every $0\leq s\le t<\lambda$.
\end{itemize}
\end{lemma}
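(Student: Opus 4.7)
The plan is to establish the four conclusions in order: divergence of $a_n$, extraction of a maximal $C^1_{loc}$-limit $\eta$ with cutoff $\lambda$, maximizer and ray status of $\eta$, and the dichotomy.

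\emph{Divergence of $a_n$.} Each $\eta^n$ is an affinely parametrized timelike maximizing $g$-geodesic, so $\ell(\eta^n_0,\gamma_{t_n}) = a_n\,|(\eta^n)'_0|_g$. Since $|(\eta^n)'_0|_g \to |v|_g < \infty$, it suffices to prove $\ell(\eta^n_0,\gamma_{t_n}) \to \infty$. Pick $T$ with $z \in I^-(\gamma_T)$; by openness of $I^-$ in $C^1$-spacetimes, $\eta^n_0 \in I^-(\gamma_T)$ for $n$ large, so the reverse triangle inequality for $\ell$ gives
\begin{equation*}
\ell(\eta^n_0,\gamma_{t_n}) \;\geq\; \ell(\eta^n_0,\gamma_T) + \ell(\gamma_T,\gamma_{t_n}) \;\geq\; c\,(t_n-T),
\end{equation*}
where $c = |\gamma'_0|_g > 0$ since $\gamma$ itself is an affinely parametrized timelike maximizer (and $t_n \uparrow \infty$). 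Hence $a_n \to \infty$.

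\emph{Extraction and maximality of $\lambda$.} Apply Lemma~\ref{le:C1precomp}(i)--(ii) to the compact set $K := \{v\}\cup\{(\eta^n)'_0 : n\in\mathbf N\}\subset TM$: this produces $\lambda_0 > 0$ on which the restrictions $\eta^n|_{[0,\lambda_0]}$ (all defined for $n$ large, by $a_n\to\infty$) are $C^1$-precompact, settling the ``for some $a > 0$'' clause. Define
\begin{equation*}
\lambda \;:=\; \sup\{a > 0 \,:\, \text{some subsequence of }(\eta^n)\text{ converges in }C^1([0,a],M)\} \;\in\; [\lambda_0,\infty].
\end{equation*}
A diagonal extraction yields a subsequence (not relabelled) converging in $C^1_{loc}([0,\lambda),M)$ to some $\eta$, and the supremum definition of $\lambda$ precludes any further subsequential extension past $\lambda$.

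\emph{Ray and maximizer status, plus dichotomy.} As a $C^1_{loc}$-limit of $g$-geodesics, $\eta$ is itself a $C^2$ $g$-geodesic with $|\eta'|_g \equiv |v|_g$ --- weak solutions of \eqref{geodesic eqn} are classical since $\Gamma^i_{jk}\in C^0$. On any $[s,t]\subset[0,\lambda)$, the identity $\ell(\eta^n_s,\eta^n_t) = (t-s)|(\eta^n)'_0|_g$ passes to the limit by continuity of $\ell_+$ (recall $\eta^n_s\to\eta_s$, $\eta^n_t\to\eta_t$ and $\ell \geq 0$ on causally related pairs), giving $\ell(\eta_s,\eta_t) = (t-s)|v|_g = \Len(\eta|_{[s,t]})$, so $\eta$ is a maximizer. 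Inextendibility at $\lambda$ follows from its maximality: any $C^1$-extension of $\eta$ past $\lambda$ would furnish, via Lemma~\ref{le:C1precomp}(ii) applied at $t = \lambda - \delta$ for small $\delta > 0$, an extension of the $C^1_{loc}$-convergence past $\lambda$, a contradiction. The dichotomy is then immediate from the constancy of $g(\eta'_t,\eta'_t) \equiv |v|_g^2$: case (i) is $|v|_g > 0$ (everywhere timelike), case (ii) is $|v|_g = 0$ where the identity above yields $\ell(\eta_s,\eta_t) = 0$.

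\emph{Main obstacle.} I expect the most delicate point to be the inextendibility assertion in the null sub-case $|v|_g = 0$: a null $g$-geodesic may a priori develop unbounded $\tilde g$-speed at finite affine parameter without its position leaving every compact subset of $M$, so the Peano-extension argument above needs a $\tilde g$-bound on $(\eta^n)'$ near $t = \lambda$ that is not automatic when the $\tilde g$-norm of a causal vector with $|\cdot|_g = 0$ can be arbitrarily large. Controlling this via the geodesic equation $\eta'' = -\Gamma(\eta)(\eta',\eta')$ and the resulting Riccati-type estimate on $|\eta'|_{\tilde g}$ while $\eta$ stays in a compact set of $M$ should suffice to close the argument.
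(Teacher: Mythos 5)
Your proposal follows essentially the same route as the paper's proof (divergence of $a_n$ via the reverse triangle inequality, $C^1$-precompactness from Lemma~\ref{le:C1precomp}, extraction of a maximal limit, constancy of $g(\eta',\eta')$ for the dichotomy), but two steps have genuine gaps.

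The first concerns the extraction of the maximal $\lambda$. You define $\lambda$ as the supremum over \emph{all} subsequences of the original sequence and then invoke ``a diagonal extraction.'' The diagonal requires the subsequences chosen for $a_1<a_2<\cdots\uparrow\lambda$ to be nested, and your definition does not supply that: if the original sequence interleaves two families whose velocities blow up at different parameter values, the subsequence you pick at stage one may admit no further subsequence converging on $[0,a_2]$, even though some other subsequence of the original sequence does. The paper avoids this by an iterative construction in which at each stage the supremum is taken only over \emph{further} subsequences of the current one (the quantities $c_i$ in its proof); note also that the lemma only asserts the existence of \emph{some} $\lambda$ with the stated non-extendability of convergence, not that $\lambda$ equals your global supremum. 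This gap is repairable, but the step fails as written.

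The second, more serious, gap is the inextendibility of $\eta$, which is what makes $\eta$ a ray. By the paper's definition, ``inextendible'' means that $\lim_{t\uparrow\lambda}\eta_t$ does not exist in $M$; your argument only excludes a $C^1$ extension of $\eta$, which is strictly weaker. You correctly isolate the residual scenario --- the position converges to some $y$ while $\vert\eta_t'\vert_{\tilde g}\to\infty$, which is a priori possible since causal vectors of fixed $g$-norm can have arbitrarily large $\tilde g$-norm --- but your proposed repair does not close it: on a compact set the geodesic equation only gives $\tfrac{d}{dt}\vert\eta_t'\vert_{\tilde g}\le C\vert\eta_t'\vert_{\tilde g}^2$, and a quadratic Riccati/Gronwall inequality is precisely the kind that permits finite-time blowup, so it cannot rule out the scenario you are worried about. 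In the timelike case one can argue instead via maximality: $\ell(z,\eta_t)=t\sqrt{g(v,v)}\le\ell(z,y)<\infty$ forces $\lambda<\infty$, and Lemma~\ref{le:C1precomp}(iii), applied to the maximizers from $z$ to $\eta_t$ with endpoints converging in $\{\ell>0\}$, bounds the velocities; then Lemma~\ref{le:C1precomp}(ii) near the limiting velocity extends the convergence of the $\eta^n$ past $\lambda$, contradicting maximality. The null case remains unresolved in your write-up; the paper's own argument at this point appeals to compactness of the set of $g$-geodesics near $y$ together with the hypothesis $v\neq 0$, rather than to a Gronwall estimate.
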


\begin{proof} We assumed $z\ll \gamma_T$ for some $T>0$, hence eventually $\eta^n_0\ll\gamma_T$ as well. By maximality we have $\ell(\eta^n_0,\gamma_{t_n})=a_n|(\eta^n)'_0|$ and by reverse triangle inequality $\ell(\eta^n_0,\gamma_{t_n})\to\infty$. Also, since $(\eta^n)'_0\to v$, we have $|(\eta^n)'_0|\to |v|<\infty$. It follows that $a_n\to\infty$.
The claimed precompactness now follows from Lemma \ref{le:C1precomp}(ii) above, which also asserts any limit curve is maximizing.

To deduce the existence of $\lambda$ and $\eta$,  we will consider $C^1_{loc}$-convergence of various subsequences on intervals of different lengths. Set $\lambda_1:=a$, so
there exists a subsequence $\{\eta^{n(k)}\}_{k \in \mathbf N}$ converging on $[0,\lambda_1)$. Starting from $i=1$,  given a subsequence converging on $[0,\lambda_i)$ we let
$c_i \geq \lambda_i$ be the supremum of those numbers $c$ such that some further subsequence converges on $[0,c)$ and set 
$\lambda_{i+1}:=\lambda_i+ \min\{1, (c_i-\lambda_i)/2\}$. Then there exists subsequence converging at least on $[0,\lambda_{i+1})$ so the process can be iterated
with $\lambda_i$ nondecreasing and 
$c_i$ nonincreasing to respective limits $\lambda,c \in (0,\infty]$.  Moreover $c>\lambda$ contradicts the construction so we must have $c=\lambda$.
A diagonal process now yields a subsequence of $\{\eta^{n(k)}\}_{k \in \mathbf N}$ which converges on $[0,\lambda)$,  with no further subsequence converging on any longer interval.

Having established the existence of $\lambda$ and $\eta$,
we argue inextendibility of $\eta$ by contradiction. Therefore
assume that $y=\lim_{t\uparrow{\lambda}}\eta_t$ exists. From the compactness of the set of solutions of the $g$-geodesic equation in a neighbourhood of $y$ we first see that ${\lambda}<\infty$ (here the assumption $v\neq 0$ matters) and then that the $(\eta^n)$'s are relatively compact in $C^1_{loc}(I,M)$ for some open interval $I$ around ${\lambda}$, contradicting the supremality of $\lambda$ in the definition of $\eta$. 

It remains to prove the dichotomy. It is clear that $\eta_t'$ is future causal for every $t$. Also, since $\eta$ is a $g$-geodesic we have that $g(\eta_t',\eta_t')$ is constant. Thus if $g(\eta_t',\eta_t')=0$ for some $t$ then $g(\eta_t',\eta_t')=0$ for every $t$ and $(ii)$ follows from maximality.
\end{proof}

We call {\bf co-ray} of $\gamma$ any curve $\eta$ built as limit as in the above statement. Notice that for every $p\in I^-(\gamma)$ a {simple compactness argument} together with global hyperbolicity shows that there exists a co-ray starting from $p$.

The study of co-rays is particularly relevant in connection with the notions of {\bf Busemann function} and of {\bf $\gamma$-adapted} curve, that we now introduce.  Ray-adapted curves are 
inspired by the {\em transport rays}~\cite{EvansGangbo99} used simultaneously and independently by three groups of authors \cite{Ambrosio03,CaffarelliFeldmanMcCann00,TrudingerWang01} 
to fix the original construction of optimal transportation maps \cite{Sudakov76}, and which have subsequently proved fruitful in 
smooth \cite{FeldmanMcCann02r,Klartag17} and nonsmooth geometry --- both Riemannian \cite{CavallettiMondino17i} and Lorentzian \cite{CavallettiMondino20+}.

Given a future complete timelike ray $\gamma:[0,\infty) \to M$, define the approximate Busemann functions $\b_t:I^-(\gamma_t)\to \R$ as
\begin{equation}\label{Busemann approximate}
\b_t(x):=\ell(\gamma_0,\gamma_t) - \ell(x,\gamma_t)
\end{equation}
and then the Busemann function $\b:I^-(\gamma)\to \R$ as 
\begin{equation}\label{Busemann limit}
\b(x):=\inf_{t\geq 0} \b_t(x)=\lim_{t\uparrow\infty}\b_t(x).
\end{equation}
An application of the reverse triangle inequality shows that $b_s\geq b_t$ for $s\leq t$, so that the limit in $t$ of these functions exist. It is then clear that $\b$ is finite-valued, upper semicontinuous and 1-steep on $I^-(\gamma)$, 
the latter meaning that
\begin{equation}\label{eq: 1steepnessbusemann}
\b(y)-\b(x)\geq \ell(x,y)\qquad\forall x,y\in 
I^-(\gamma),\quad x\leq y.
\end{equation}
Notice the Busemann function $\b$ is set in a different font from the endpoint $(a,b)$.

A curve $\sigma:(a,b)\to 
I^-(\gamma)\subset M$ will be called {\bf $\gamma$-adapted} provided
\begin{equation}
\label{eq:wellsuited}
    \b(\sigma_t)-\b(\sigma_s)=\ell(\sigma_s,\sigma_t)=t-s,\quad\forall s,t\in (a,b)\ \mbox{\rm with}
    \ s < t. 
\end{equation}
The rigidity provided by equality in \eqref{eq: 1steepnessbusemann} is the source of additional regularity we shall continue to exploit --- not only for $\sigma$ but also for $\b$. Recall Lemma \ref{le:regularityadapted} implies
a $\gamma$-adapted curve $\sigma$ is necessarily a maximizing $g$-geodesic.

\begin{remark}[{Gradient flow} characterization of $\gamma$-adapted curves]{\rm
    Although being $\gamma$-adapted is a {nonsmooth} 
    concept, it is worth noticing that if $\sigma$ takes values in an open set $U$ where $\b$ is $C^1$ with $|d\b|\equiv 1$ (as will happen later in our analysis), then $\sigma$ is $\gamma$-adapted if and only if $\sigma'_t=\nabla\b(\sigma_t)$ for every $t\in(a,b)$.  Indeed, if the latter holds we have $|\sigma'_t|=|d\b|(\sigma_t)=1$ and also $\frac{d}{dt}\b(\sigma_t)=|d\b|^2(\sigma_t)=1$. Thus
\[
t-s=\b(\sigma_t)-\b(\sigma_s)\geq\ell(\sigma_s,\sigma_t)\geq \int_s^t|\sigma'_r|\,d r = t-s,
\]
forcing the validity of \eqref{eq:wellsuited}. Vice versa, if \eqref{eq:wellsuited} holds then both  $|\sigma'_t|$ and $\frac{d}{dt}\b(\sigma_t)$ exist and are identically 1, hence
\[
1=\frac{d}{dt}\b(\sigma_t)=d\b_{\sigma_t}(\sigma'_t)\geq |d\b|(\sigma_t)|\sigma'_t|=1
\]
and the equality in the reverse Cauchy-Schwarz inequality forces $\nabla \b({\sigma_t})$ and $\sigma_t'$ to be parallel. Since they both have norm 1, they  agree. 
}\fr\end{remark}

The concept of co-rays and of $\gamma$-adapted curves are related via the following:

\begin{proposition}[Co-rays are tangent to $\gamma$-adapted curves]\label{Proposition: generalizedcoraysalongrayagreewithray}
    Let $(M,g)$ be a globally hyperbolic $C^1$-spacetime, $\gamma:~[0,\infty) \to~M$ a future complete timelike ray and $\sigma:(a,b)\to M$ a $\gamma$-adapted curve.
    Then for every $t\in(a,b)$ and co-ray $\eta$ starting from $\sigma_t$ we have $\eta'_0=k\sigma'_t$ for some $k\in(0,+\infty)$.
    \end{proposition}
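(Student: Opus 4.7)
The plan is to exploit the rigidity of the $\gamma$-adaptedness condition \eqref{eq:wellsuited} to upper bound $\b(\eta_s)$ through the co-ray approximation, transfer that bound to an upper bound on $\ell(\sigma_{t^*-\delta},\eta_s)$ via 1-steepness, and then derive the tangency conclusion from the strict cut-the-corner inequality implicit in Lemma~\ref{le:regularityadapted}. Fix $t^*\in(a,b)$, put $p:=\sigma_{t^*}$, and write $\eta:[0,\lambda)\to M$ as the $C^1_{loc}$-limit supplied by Lemma~\ref{le:corays} of timelike maximizers $\eta^n:[0,a_n]\to M$ with $\eta^n_0\to p$, $(\eta^n)'_0\to\eta'_0\neq 0$ and $\eta^n_{a_n}=\gamma_{t_n}$ for $t_n\uparrow\infty$. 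Because each $\eta^n$ is maximizing to $\gamma_{t_n}$,
\[
\b_{t_n}(\eta^n_s)-\b_{t_n}(\eta^n_0)=s\,|(\eta^n)'_0|\qquad\forall s\in[0,a_n].
\]

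The first step is to show $\b_{t_n}(\eta^n_0)\to\b(p)$. For small $\delta>0$, openness of the chronological relation yields $\sigma_{t^*-\delta}\ll\eta^n_0\ll\sigma_{t^*+\delta}\ll\gamma_{t_n}$ for all large $n$. Applying the reverse triangle inequality to these two chronological chains produces
\[
\b_{t_n}(\sigma_{t^*-\delta})+\ell(\sigma_{t^*-\delta},\eta^n_0)\ \leq\ \b_{t_n}(\eta^n_0)\ \leq\ \b_{t_n}(\sigma_{t^*+\delta})-\ell(\eta^n_0,\sigma_{t^*+\delta}).
\]
The outer terms tend to $\b(\sigma_{t^*\pm\delta})=\b(p)\pm\delta$ by the monotone limit \eqref{Busemann limit} together with \eqref{eq:wellsuited}, while both $\ell$-terms tend to $\delta$ by continuity of $\ell_+$ at the chronological pairs $(\sigma_{t^*-\delta},p)$ and $(p,\sigma_{t^*+\delta})$. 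Hence $\b_{t_n}(\eta^n_0)\to\b(p)$, and consequently $\b_{t_n}(\eta^n_s)\to\b(p)+s|\eta'_0|$. Since $\b\leq\b_{t_n}$ pointwise, $\limsup_n\b(\eta^n_s)\leq\b(p)+s|\eta'_0|$.

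Set $q:=\sigma_{t^*-\delta}$. Then $q\ll p\leq\eta_s$ gives $q\ll\eta_s$, and $q\ll\eta^n_s$ for large $n$. Applying 1-steepness \eqref{eq: 1steepnessbusemann} to $(q,\eta^n_s)$ and passing to the limit, using continuity of $\ell_+$ at the positive-separation pair $(q,\eta_s)$, yields
\[
\ell(q,\eta_s)\ \leq\ \limsup_n\b(\eta^n_s)-\b(q)\ \leq\ \delta+s|\eta'_0|.
\]
Now suppose for contradiction that $\eta'_0$ is not a positive multiple of $\sigma'_{t^*}$ (this is automatic when $\eta$ is null, since $\sigma'_{t^*}$ is timelike). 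The concatenation $\sigma|_{[t^*-\delta,t^*]}\ast\eta|_{[0,s]}$ is then a future-directed causal curve from $q$ to $\eta_s$ of Lorentzian length exactly $\delta+s|\eta'_0|$ whose tangent direction undergoes a projective jump at $p$. As $\ell(q,\eta_s)\geq\delta>0$, if this broken curve were a maximizer, it would admit, by Lemma~\ref{le:regularityadapted}, an $\ell$-affine $C^2$ reparametrization solving the geodesic equation, which is incompatible with the corner at $p$. Hence $\ell(q,\eta_s)>\delta+s|\eta'_0|$, contradicting the previous display. Thus $\eta'_0=k\,\sigma'_{t^*}$ for some $k>0$.

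The main technical obstacle is the convergence $\b_{t_n}(\eta^n_0)\to\b(p)$ at the \emph{moving} basepoint $\eta^n_0\to p$: since $\b$ is merely upper semicontinuous, this cannot be read off directly from $\b_{t_n}\downarrow\b$, but must be obtained by squeezing $\eta^n_0$ between the chronological predecessors and successors of $p$ supplied by $\sigma$, using the $\gamma$-adapted identities to compute both sides of the squeeze. Everything else is careful bookkeeping of 1-steepness, $\ell_+$-continuity, and the strict broken-corner inequality.
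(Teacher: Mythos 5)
Your proof is correct and follows essentially the same strategy as the paper's: derive a contradiction with the cut-the-corner principle by showing the concatenation of $\sigma$ with $\eta$ is maximizing, which both arguments obtain from the maximality of the approximating geodesics $\eta^n$, the $1$-steepness of $\b$, and the rigidity \eqref{eq:wellsuited} along $\sigma$. The only difference is organizational — you squeeze $\b_{t_n}(\eta^n_0)\to\b(p)$ between values at $\sigma_{t^*\pm\delta}$ and then apply $1$-steepness to $(q,\eta^n_s)$, whereas the paper works directly with differences of $\ell(\cdot,\gamma_{t_n})$ using a single point $\sigma_r$ with $r>t$ — but the key estimates are the same.
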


\begin{proof}
  To derive a contradiction,  suppose not.  Recall
  the co-ray $\eta:[0,\lambda) \longrightarrow M$ is inextendible
  beyond some $\lambda \in (0,\infty]$.
  By a standard `cutting the corner' argument, the curve obtained by following $\sigma$ up to time $t$ and then $\eta$, cannot be a maximizer. The contradiction will come by showing instead that 
\begin{equation}
\label{eq:aligned}
\ell(\sigma_s,\eta_c)\leq \ell(\sigma_{s},\sigma_t)+\ell(\sigma_t,\eta_c)
\qquad\forall  s\in (a,t),\ c\in(0,{\lambda}).
\end{equation}
By the definition of co-ray, there are maximizing $g$-geodesics $\eta^n:[0,a_n]\to M$, $n\in\mathbf N$, converging to $\eta$ in $C^1_{loc}([0,{\lambda}),M)$ with $\eta^n_{a_n}=\gamma_{t_n}$ for some $t_n\uparrow\infty$. Fixing 
$r\in (a,b)$ with $r>t$, notice that eventually we have $\eta^n_0\in I^-(\sigma_r)$ and that
\begin{equation}
\label{eq:brigid}
\begin{split}
\lims_{n\to\infty}\ell(\sigma_t,\gamma_{t_n})-\ell(\eta^n_0,\gamma_{t_n})&\leq 
\lims_{n\to\infty}\ell(\sigma_t,\gamma_{t_n})-\ell(\eta^n_0,\sigma_r)-\ell(\sigma_r,\gamma_{t_n})\\
&\stackrel{\eqref{Busemann limit}}=\b(\sigma_r)-\b(\sigma_t)-\ell(\sigma_t,\sigma_r)
\\&\stackrel{\eqref{eq:wellsuited}}= 0.
\end{split}
\end{equation}
Also, the definition \eqref{Busemann limit} of $\b$ gives
\begin{equation}
    \label{eq:fromdefb}
    \begin{split}
\lims_{n\to\infty}    \b(\gamma_{t_n})
-\ell(\sigma_t,\gamma_{t_n})&=\lims_{n\to\infty}    \ell(\gamma_0,\gamma_{t_n})
-\ell(\sigma_t,\gamma_{t_n})
\\&=\b(\sigma_t).
\end{split}
\end{equation}
Thus we have
\[
\begin{split}
\ell(\sigma_s,\eta^n_c)   &\leq \ell(\sigma_s,\gamma_{t_n})-\ell(\eta^n_c,\gamma_{t_n})\\
     \text{(by \eqref{eq: 1steepnessbusemann})}\qquad &\leq \b(\gamma_{t_n})-\b(\sigma_s)-\ell(\eta^n_c,\gamma_{t_n})\\
    \text{(by \eqref{eq:fromdefb})}\qquad&\leq \ell(\sigma_t,\gamma_{t_n})+\b(\sigma_t)-\b(\sigma_s)-\ell(\eta^n_c,\gamma_{t_n})+o(1)\\
    \text{(by \eqref{eq:brigid} and \eqref{eq:wellsuited})}\qquad&\leq \ell(\eta^n_0,\gamma_{t_n})+\ell(\sigma_s,\sigma_t)-\ell(\eta^n_c,\gamma_{t_n})+o(1)\\
    \text{($\eta^n$ maximizing)}\qquad&=\ell(\eta^n_0,\eta^n_c)+\ell(\sigma_s,\sigma_t)+o(1).
\end{split}
\]
Passing to the limit in $n$ we get \eqref{eq:aligned} and the conclusion.
\end{proof}
This last result is the basis on which the following compactness statement is built. It will be a starting point for our study of the regularity of the Busemann function $\b$.

\begin{proposition}[From $\gamma$-adapted curves to compactness in $C^1$
] \label{Prop: boundinitialtangents}
Let $(M,g)$ be a globally hyperbolic $C^1$-spacetime, $\gamma:~[0,\infty) \to~M$ a future complete timelike ray, $\sigma:(a,b)\to M$ a $\gamma$-adapted curve \eqref{eq:wellsuited}, $\bar t\in(a,b)$
{and $\T>0$ with $\ell(\sigma_{\bar t},\gamma_\T)>0$.}
    Then there is a neighbourhood $U$ of $\sigma_{\bar t}$ such that:  for small enough ${\lambda} >0$, the collection $\mathcal G$ of maximizing $g$-geodesics parametrized by  $g$-arclength  starting from $U$ and ending in  $\{\gamma_t:t\geq{\Lambda}\}$ is precompact in $C^1([0,{\lambda}],M)$. Also, the $C^1$-closure of $\mathcal G$ only contains timelike maximizers.
\end{proposition}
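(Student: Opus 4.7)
The plan is to reduce the claim to a uniform $\tilde g$-bound on the initial velocities of elements of $\mathcal G$ and then invoke Lemma~\ref{le:C1precomp}(i)--(ii). Since these curves are parametrized by $g$-arclength we have $g(\sigma'_0,\sigma'_0)=1$, but there is no a priori control on $|\sigma'_0|_{\tilde g}$, because $g$-lightcones look infinite in the Riemannian $\tilde g$. Ruling out $|\sigma'_0|_{\tilde g}\to\infty$ along a sequence will be the main obstacle, and it is exactly here that the $\gamma$-adaptedness of $\sigma$ must enter, through the tangency result of Proposition~\ref{Proposition: generalizedcoraysalongrayagreewithray}.

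Using the inequality $\ell(x,\gamma_t)\ge\ell(x,\gamma_\T)+(t-\T)$ for $t\ge\T$ together with the continuity of $\ell_+$ at $(\sigma_{\bar t},\gamma_\T)$, I would choose a relatively compact neighborhood $U\ni\sigma_{\bar t}$ and $c>0$ so that $\ell(x,\gamma_t)\ge c$ for every $x\in\bar U$ and $t\ge\T$. To get the $\tilde g$-bound, argue by contradiction: if every such $U$ admits counterexamples, shrink $U_k\searrow\{\sigma_{\bar t}\}$ and select $\sigma^k:[0,L_k]\to M$ in $\mathcal G_{U_k}$ ending at $\gamma_{t_k}$ with $t_k\ge\T$ and $\alpha_k:=|(\sigma^k)'_0|_{\tilde g}\to\infty$; then $\sigma^k_0\to\sigma_{\bar t}$. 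If $\{t_k\}$ is bounded, pass to a subsequence with $t_k\to t_\infty\ge\T$: the endpoints $(\sigma^k_0,\gamma_{t_k})$ lie in a compact subset of $\{\ell>0\}$, continuity of $\ell$ at $(\sigma_{\bar t},\gamma_{t_\infty})$ gives $L_k\to\ell(\sigma_{\bar t},\gamma_{t_\infty})\in(0,\infty)$, and Lemma~\ref{le:C1precomp}(iii) applied to the reparametrizations $\hat\sigma^k(s):=\sigma^k(sL_k)$ forces $L_k\alpha_k=|(\hat\sigma^k)'_0|_{\tilde g}$ to stay bounded, contradicting $\alpha_k\to\infty$ since $L_k$ is bounded below. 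Hence WLOG $t_k\to\infty$.

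For this remaining case I rescale: set $\tilde\sigma^k(t):=\sigma^k(t/\alpha_k)$, so $(\tilde\sigma^k)'_0=\alpha_k^{-1}(\sigma^k)'_0$ has $\tilde g$-norm $1$. Extracting a subsequence, $(\tilde\sigma^k)'_0\to\tilde v$ with $|\tilde v|_{\tilde g}=1$ and $g(\tilde v,\tilde v)=\lim\alpha_k^{-2}=0$, so $\tilde v$ is nonzero and $g$-null. Because the $\tilde\sigma^k$ are timelike maximizers ending at $\gamma_{t_k}$ with $t_k\to\infty$ and $\tilde\sigma^k_0\to\sigma_{\bar t}\in I^-(\gamma)$, Lemma~\ref{le:corays} yields (along a further subsequence) a co-ray $\eta$ of $\gamma$ starting at $\sigma_{\bar t}$ with $\eta'_0=\tilde v$. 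But then Proposition~\ref{Proposition: generalizedcoraysalongrayagreewithray} applied at $t=\bar t$ forces $\eta'_0=k\,\sigma'_{\bar t}$ for some $k>0$; since $\sigma'_{\bar t}$ is timelike with $g$-norm $1$ (as \eqref{eq:wellsuited} combined with Lemma~\ref{le:regularityadapted} makes $\sigma$ proper-time parametrized), this contradicts $\tilde v$ being $g$-null.

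The resulting uniform bound $|\sigma'_0|_{\tilde g}\le M$ places the initial $1$-jets of $\mathcal G$ in a compact subset of $TM$, so Lemma~\ref{le:C1precomp}(i)--(ii) supplies a uniform $\lambda>0$, the $C^1([0,\lambda],M)$-precompactness of $\mathcal G$, and the fact that $C^1$-limits are maximizing $g$-geodesics. Continuity of $g$ on the compact set of limiting initial velocities gives $g(\bar\sigma'_0,\bar\sigma'_0)=1$ for any limit $\bar\sigma$, and since the geodesic equation preserves the $g$-norm, $\bar\sigma$ stays timelike throughout $[0,\lambda]$, completing the claim.
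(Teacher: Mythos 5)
Your proposal is correct and follows essentially the same route as the paper's proof: the same case split on whether the endpoints' parameters $t_k$ stay bounded or tend to infinity, Lemma~\ref{le:C1precomp}(iii) after affine reparametrization in the bounded case, and in the unbounded case the rescaling by $|v_k|_{\tilde g}$ to produce a co-ray whose initial velocity must be timelike by Proposition~\ref{Proposition: generalizedcoraysalongrayagreewithray}, contradicting nullity of the rescaled limit. The only (cosmetic) difference is that you organize the argument as first establishing a uniform $\tilde g$-bound on initial velocities and then applying Lemma~\ref{le:C1precomp}(i)--(ii), whereas the paper directly extracts a convergent subsequence of initial velocities and shows the limit is timelike; the content is the same.
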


\begin{proof} 
Indirectly assume that no such neighbourhood exists. So, let $x_n\to\sigma_{\bar t}$ and for every $n\in\mathbf N$ let $\eta^n:[0,a_n]\to M$ be a maximizing $g$-geodesic parametrized by $g$-arclength from $x_n$ to $\gamma_{t_n}$ for some $t_n\geq{\Lambda}$. Put $v_n:=(\eta^n)'_0$. We shall prove --- after passing to a non-relabeled subsequence --- that the $v_n$'s converge to a timelike vector.  By the arbitrariness of $(x_n)$ and $(\eta^n)$, a diagonalization argument and  Lemma \ref{le:C1precomp}(ii) this  suffices to conclude.

Possibly passing to a  subsequence we  distinguish two cases.

{\sf CASE 1}: $t_n\leq {\Lambda}'$ for every $n$ and some ${\Lambda}'<\infty$. 

Let $K$ denote the closure of $\{x_n\}_{n \in \mathbf N}$.
In this case  the $\eta^n$'s  have image contained in the emerald $J^+(K)\cap J^-(\gamma_{{\Lambda}'})$, which is compact, by global hyperbolicity \cite[Cor.\ 3.4]{Saemann16}.  Also, on the  compact set $\{(x,\gamma_t):x\in K,\ t\in[{\Lambda},{\Lambda}']\}$ the time separation is positive and finite, hence contained in some interval  $[{\lambda},\beta]\subset(0,\infty)$, say. Thus $a_n\in [{\lambda},\beta]$ for every $n$ so that after uniformly biLipschitz affine reparametrization  we can assume that  all the curves are defined in $[0,1]$. Then Lemma \ref{le:C1precomp}(iii) ensures that the $\eta^n$'s are precompact in $C^1([0,1],M)$, hence $\{v_n\}_{n\in\mathbf N}$ is precompact in $TM$ and then clearly any limit $\eta$ is  a  maximizing $g$-geodesic with  $\eta_0\in K$ and $\eta_1\in\{\gamma_t:t\in[{\Lambda},{\Lambda}']\}$. Thus $\ell(\eta_0,\eta_1)>0$ and $\eta$ is timelike, as desired.

{\sf CASE 2}: $t_n\to+\infty$. 

Since $\eta^n_0\to\sigma_{\bar t}$, possibly passing to a subsequence we can find a sequence $(\alpha_n)\subset [0,\infty]$ admitting limit $\alpha\in[0,\infty]$ such that the sequence $n\mapsto \tilde v_n:=\alpha_n v_n$ has a non-zero limit $\tilde v$.  (For example,  one may introduce any Riemannian metric $\tilde g$ on $M$ and choose $\alpha_n = |v_n|^{-1}_{\tilde g}$.)

To conclude it is  sufficient to prove that $\alpha\in(0,\infty)$.

Notice that since $|v_n|:= \sqrt{g(v_n,v_n)}=1$, no subsequence of the $v_n$'s can converge to the zero vector. Hence $\alpha\neq\infty$.

Also, since $|\tilde v|=\lim_n|\tilde v_n|=\lim_n\alpha_n=\alpha$, to prove that $\alpha\neq 0$ it suffices to show that $\tilde v$ is timelike.

With this aim, let us  introduce the auxiliary curves $\tilde\eta^n:[0,\tfrac{a_n}{\alpha_n}]\to M$ as $\tilde\eta^n_t:=\eta^n_{\alpha_nt}$, and notice that $(\tilde\eta^n)'_0=\tilde v_n\to\tilde v\neq 0$. By Lemma \ref{le:C1precomp}(ii) in some right-neighbourhood $[0,{\lambda})$ of 0 we have $C^1_{loc}$ convergence of the $\tilde\eta^n$'s to a limit $\tilde\eta$: our goal is to prove that $\tilde\eta$ is timelike.

Possibly passing to a further subsequence and picking a bigger ${\lambda}$ we can assume that for no ${\lambda}'>{\lambda}$ are  the $\tilde\eta^n$'s convergent in $C^1_{loc}([0,{\lambda}'),M)$.

In this case the limit curve $\tilde\eta$ is by definition a co-ray starting from $\sigma_{\bar t}$, hence timelike by Proposition \ref{Proposition: generalizedcoraysalongrayagreewithray}.
\end{proof}

\section{\texorpdfstring{The Busemann function is superdifferentiable near $\sigma$}{The Busemann function is superdifferentiable near sigma}}\label{Section: EquisuperdiffBusemannlimits}

In this section we start making more quantitative regularity estimates on the Busemann function, the main result being Proposition \ref{P2:semiconcave}, where its superdifferentiability  is shown in the proximity of a $\gamma$-adapted segment $\sigma$. As before, we continue to fix an auxiliary complete Riemannian metric tensor $\tilde g$ on $M$. We shall denote by $\tilde{\sf d}$ the induced distance.

Given $U\subset M$ open, a function $u:U\to \R$ is called {\bf locally Lipschitz}, provided for any $K\subset U$ compact there is $C(K)>0$ such that
\[
|u(y)-u(x)|\leq C\tilde{\sf d}(x,y)\qquad\forall x,y\in K.
\]
A family of functions on $U$ is called {\bf locally  equi-Lipschitz} if the same constant $C(K)$ can be chosen in the above inequality for all of the functions.

We say that $u$ is {\bf superdifferentiable} at $x\in U$  if there exists $ v \in T^*_xM$ such that
\begin{equation}\label{superdifferentiable}
u(\exp^{\tilde g}_x w) - u(x) - (v,w) \leq o(|w|_{\tilde g}) 
\end{equation}
as $w \to 0$. Any such $v$ is  called a superdifferential of $u$ at $x$. Here $\exp^{\tilde g}$ denotes the Riemannian exponential map and $(v,w)$ the duality pairing of cotangent with tangent vector. Analogously, a family $\{u_i\}_{i \in \mathcal I}$ is said to be 
{\bf equi-superdifferentiable} on $U \subset M$, if for any $x\in U$ and $i\in\mathcal I$ there is a superdifferential $v_{i,x}$ of $u_i$ at $x$ such that  each compact set $K \subset U$ satisfies
\begin{equation}\label{equi-superdifferentiable}
\sup_{i\in\mathcal I,\  x \in K}    \big\{u_i(\exp^{\tilde g}_x w) - u_i(x) - (v_{i,x},w) \big\}\leq o(|w|_{\tilde g}).
\end{equation} 
In other words,  if the error term in \eqref{superdifferentiable} 
is uniform over $i\in \mathcal I$, $x\in K$. Notice that in this case \emph{any} choice of superdifferentials $v_{i,x}$ does the job in the above. Similarly, a family $\{u_i\}_{i \in \mathcal I}$ is said to be {\bf equi-subdifferentiable} if $\{-u_i\}_{i \in \mathcal I}$ is equi-superdifferentiable.
(Equi-superdifferentiability of a family is a local  differential topological concept:  it depends neither on any Lorentzian metric,  nor on the choice of Riemannian metric: equi-superdifferentiability for one Riemannian metric $\tilde g$ on $M$ implies equi-superdifferentiability for every other choice of Riemannian metric as well.)

Our first task is to show the approximate Busemann functions  $(\b^+_t)_{t\ge T}$ are locally equi-Lipschitz and equi-superdifferentiable   in a neighbourhood of the image of  a $\gamma$-adapted curve. {This essentially reduces to showing these regularities for negatives of Lorentz distance functions, as stated in Proposition \ref{Pr:Equisupdist}. For a fixed Lorentzian metric $g=g_0$ this is proved} in the spirit of Villani \cite[Proposition 10.15(iii)]{Villani09} and McCann \cite[Theorem 3.6]{McCann20}. 
For later use in Section \ref{Section: pdAlembcomp} however, we merge this with the observation that our equi-superdifferentiability and local equi-Lipschitz continuity statements hold with error terms that are independent of $\varepsilon$
across the good approximation $\{g_\varepsilon:\varepsilon > 0\}$ of $g$ from Lemma \ref{Le:Good approx}. We adopt the following notational convention. Whenever an object is tagged with an $\varepsilon$, it refers to the respective quantity induced by $g_\varepsilon$; otherwise, it is understood with respect to $g$.  We also write $g_0 := g$.

\begin{proposition}[Equi-superdifferentiable Lorentz distance functions]\label{Pr:Equisupdist}   Let $(M,g)$ be a globally hyperbolic $C^1$-spacetime. Let $\{g_\varepsilon : \varepsilon > 0\}$ be a good approximation of $g$ as in Lemma~\ref{Le:Good approx}. As above, we set $g_0:= g$. Let $K,H\subset M$ be  compact so that $K\times H\subset \{\ell>0\}$.
Then there is $\varepsilon_0>0$ such that the family $\{-\ell_\varepsilon(\cdot,o) : \varepsilon\in [0,\varepsilon_0),\ o\in H\}$ is equi-Lipschitz and equi-superdifferentiable on $K$. Also,  $\vert d\ell_\varepsilon(\cdot,o)\vert _{g_\eps}= 1$ a.e.~on $K$ for every $\varepsilon\in [0,\varepsilon_0)$ and $o\in H$.
\end{proposition}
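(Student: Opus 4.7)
The plan is to prove superdifferentiability by a first-variation argument, in the spirit of Villani \cite[Prop.~10.15(iii)]{Villani09} and McCann \cite[Thm.~3.6]{McCann20}, made uniform across the good approximation $\{g_\eps\}$. First, continuity of $\ell_+$ combined with $K\times H\subset\{\ell>0\}$ and compactness yields $\ell\ge\ell_0>0$ on $K\times H$. Using $g_\eps\to g$ in $C^1_{loc}$, I would choose $\eps_0>0$ small enough that every $g$-maximizer from $K$ to $H$ remains $g_\eps$-timelike, whence $\ell_\eps\ge\ell_0/2$ uniformly on $K\times H$ for $\eps\in[0,\eps_0)$. Each $g_\eps$ is globally hyperbolic (Remark~\ref{R:heredity of gh}), so Lemma~\ref{le:C1precomp}(iii) provides a $C^1$-compact family of $g_\eps$-unit-speed maximizers $\sigma_{\eps,x,o}\colon [0,L_\eps]\to M$ from $x\in K$ to $o\in H$ (after affine rescaling to $[0,1]$), with initial velocities $\sigma'_{\eps,x,o}(0)$ confined to a fixed compact subset of $TM$. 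The candidate superdifferential is then $v_{\eps,x,o}:=g_\eps(\sigma'_{\eps,x,o}(0),\cdot)\in T^*_xM$.

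In a $\tilde g$-normal chart around $x$ covering $\sigma_{\eps,x,o}([0,L_\eps])$, for small $w\in T_xM$ I would construct the admissible variation $\tilde\sigma^w_t:=\sigma_{\eps,x,o}(t)+W(t)$ with $W(t)=(1-t/L_\eps)w$; by construction $\tilde\sigma^w_0=\exp^{\tilde g}_xw$, $\tilde\sigma^w_{L_\eps}=o$, and $|W'|_{\tilde g}\lesssim|w|_{\tilde g}/\ell_0$. For $|w|_{\tilde g}$ uniformly small, $\tilde\sigma^w$ remains $g_\eps$-timelike, so $\ell_\eps(\exp^{\tilde g}_xw,o)\ge\Len_{g_\eps}(\tilde\sigma^w)$. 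Taylor-expanding $g_\eps(\tilde\sigma^w_t)(\dot{\tilde\sigma}^w_t,\dot{\tilde\sigma}^w_t)$ to first order in $W$, extracting the square root, integrating over $t$, and integrating by parts (using that $\sigma_{\eps,x,o}$ solves the $g_\eps$-geodesic equation) yields
\[
\Len_{g_\eps}(\tilde\sigma^w)=L_\eps-(v_{\eps,x,o},w)+E_\eps(w),
\]
which rearranges to the desired one-sided bound on $-\ell_\eps(\cdot,o)$ at $x$, provided $E_\eps(w)=o(|w|_{\tilde g})$ uniformly in $(\eps,x,o)$.

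The uniformity of $E_\eps$ is the crux. The error splits into purely quadratic terms (from $|W'|_{\tilde g}^2$ and $|W|_{\tilde g}|W'|_{\tilde g}$), bounded by $C|w|_{\tilde g}^2/\ell_0^2$, plus a remainder controlled by $\sup_{s\in[0,1]}|Dg_\eps(\sigma_{\eps,x,o}+sW)-Dg_\eps(\sigma_{\eps,x,o})|\cdot|w|_{\tilde g}$. A common modulus of continuity for $\{Dg_\eps:\eps\in[0,\eps_0)\}$ on the fixed compact containing all the $\sigma_{\eps,x,o}$ is obtained from the $C^0_{loc}$-convergence $Dg_\eps\to Dg$ via Arzela--Ascoli (the family is precompact, hence equi-continuous on compacts), giving $E_\eps(w)=o(|w|_{\tilde g})$ uniformly. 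Equi-Lipschitzness then follows from uniform boundedness of $v_{\eps,x,o}$ (compactness of initial velocities) together with the elementary fact that a continuous function with uniformly bounded superdifferentials at every point of a convex chart is Lipschitz (integrating the superdifferential bound along straight segments and symmetrizing the endpoints). Finally, Rademacher's theorem applied in charts with $\tilde g$ yields a.e.\ differentiability of $\ell_\eps(\cdot,o)$ on $K$; at any differentiability point $x$, combining the $1$-steepness of $\ell_\eps$ (reverse triangle inequality) with the reverse Cauchy--Schwarz inequality inside the $g_\eps$-timelike cone, tested against $\sigma'_{\eps,x,o}(0)$, forces $|d\ell_\eps(\cdot,o)|_{g_\eps}=1$.

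The main obstacle is the uniform control $E_\eps(w)=o(|w|_{\tilde g})$: this is precisely where mere $C^1$-regularity of $g$ delivers only equi-superdifferentiability rather than the equi-semiconcavity available in the $C^{1,1}$ setting of \cite{BGMOS24+}, and it hinges on extracting a common modulus of continuity for $\{Dg_\eps\}$ out of the $C^1_{loc}$-convergence $g_\eps\to g$. Every other ingredient --- the first variation, the reverse triangle and reverse Cauchy--Schwarz inequalities, and the superdifferential-to-Lipschitz passage --- is then routine.
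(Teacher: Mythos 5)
Your proposal follows essentially the same route as the paper: a first-variation argument around the $C^1$-compact family of maximizers from Lemma~\ref{le:C1precomp}(iii), with the uniformity of the $o(|w|_{\tilde g})$ error extracted from a common modulus of continuity for the first derivatives of $\{g_\eps\}$ coming from $C^1_{loc}$-convergence --- this is exactly the paper's mechanism (phrased there as $C^1$-compactness of the Lagrangians $L_\eps$ on a compact neighbourhood $\hat U$ of the velocity set). One step as written would fail: a single $\tilde g$-normal chart around $x$ covering all of $\sigma_{\eps,x,o}([0,L_\eps])$ need not exist, since $o\in H$ can be far from $K$; the fix is to localize the variation near $t=0$ (multiply $w$ by a cutoff supported in a uniformly short initial parameter interval, which compactness of the geodesic family makes possible), and this is precisely what the paper's construction does by using compactly supported vector fields $w_{(i)}$ that vanish on $H$. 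The other difference is inessential but worth noting: you integrate by parts against the geodesic equation to identify the superdifferential as the endpoint covector $g_\eps(\sigma'(0),\cdot)$, whereas the paper takes the superdifferential to be the abstract linear functional $Q_{\eps,\sigma}(\xi)=\int_0^1 DL_{\eps,\sigma'_s}(R_x^{\sigma'_s}(\xi))\,ds$ and never invokes the Euler--Lagrange equation --- superdifferentiability only requires \emph{some} linear upper bound, so the cancellation you compute is not needed (though it is valid here, the geodesics being $C^2$ even for $\eps=0$). Your treatments of the lower bound on $\ell_\eps$, the equi-Lipschitz consequence, and the eikonal identity $|d\ell_\eps(\cdot,o)|_{g_\eps}=1$ via $1$-steepness and reverse Cauchy--Schwarz coincide with the paper's.
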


\begin{proof} 
The idea underlying this proof is that we have enough compactness and smoothness to quantify the approximation provided by the first variation formula,
even if we lack sufficient smoothness to derive a second variation formula.  We exploit the fact that a continuous derivative on a compact set has a uniform
modulus of continuity.

Recall  that  the Lagrangian $L_\varepsilon:TM\to\R$ defined as $L_\eps(v): = -g_\varepsilon(v,v)^{1/2}$ (which is of regularity $C^1$ on the timelike future bundle $T^\eps_+M\subset T^0_+M$) induces the time  separation $\ell_\eps$ via 
\begin{equation}
    \label{2global time separation}
\ell_\varepsilon(x,y) := \sup_{\sigma(0) = x, \sigma(1)=y}\int_0^1 L_\eps(\sigma'_s)\,ds.
\end{equation}
Moreover $\ell_\varepsilon\to \ell$ uniformly as $\varepsilon\to 0$ on the compact subset $K\times H$ of $\{\ell > 0\}$, cf.~Braun--Calisti \cite[Cor.~3.5]{BraunCalisti2024}. Thus, there exists $\varepsilon_0> 0$ such that $\inf_{\varepsilon\in(0,\varepsilon_0)} \inf_{x\in K,\,o\in H} \ell_\varepsilon(x,o) > 0$. 

Hence  a simple variant of Lemma \ref{le:C1precomp}(iii) --- easily achievable by inspecting the proof --- asserts the collection $\mathcal G$ of maximizing $g_\eps$-geodesics on $[0,1]$,  $\eps\in[0,\eps_0]$, starting in $K$ and ending in $H$ is $C^1([0,1],M)$ compact and also  $\hat K:=\overline{\{\sigma_s':\sigma\in\mathcal G,\ s\in[0,1]\}}\subset  T^0_+M$. In particular there is $\hat U\supset \hat K$  open,  whose closure is compact and  contained in $T_+M$.

Since $K$ is compact, assume 
it can be covered by a single smooth coordinate chart $(x^1,\ldots,x^n)$ without loss of generality.
To construct suitable variations near $x$ of the geodesic curve attaining the maximum in \eqref{2global time separation},
multiply each coordinate vector field $\frac{\p}{\p x^i}$ with a smooth cutoff function to obtain a smooth compactly supported vector field $w_{(i)}$ which agrees with 
$\frac{\p}{\p x^i}$ on $K$ but vanishes outside the coordinate chart as well as on $H$. 
On $TK \times M$,  define 
$W((\xi,x),y)= \sum_{i=1}^n \xi^i w_{(i)}(y)$ where $(\xi^1,\ldots,\xi^n)$ denote the coordinates
of the tangent vector $\xi \in T_xM$ in the given coordinate system.\ Notice $W((\xi,x),x)=\xi$, meaning
the vector field $W((\xi,x),\cdot)$ represents a perturbation which displaces $x$ in direction $\xi$. For fixed $(\xi,x)\in TK$ the corresponding unit variation $M\ni y\mapsto\exp^{\tilde g}_y(W((\xi,x),y))\in M$ depends smoothly on all of its arguments; we shall denote by $S_x^v(\xi)\in T_{\exp^{\tilde g}_y(W_x(\xi)(y))}M$ the $y$-differential of this map evaluated at  $v\in T_yM$. Notice that $S_x^v(0)=v$ and let $R_x^v:T_xM\to T_v(TM)$ be the $\xi$-differential of $S_x^v(\cdot)$ at $0\in T_xM$. 
Compactness of $K$ and $\hat K$ imply the existence of a constant $C>0$ independent of $x \in K$ and of $v \in \hat K$ such that
\begin{equation}
\label{eq:normR}
\|R_x^v(\tilde \xi)\|_{\tilde g}\leq C|\tilde \xi|_{\tilde g},\qquad\forall v\in \hat K, \tilde\xi \in T_xM
\end{equation}
where $\|\cdot\|_{\tilde g}$ is the Sasaki metric  on $T_v(TM)\sim_{\tilde g}T_yM\times T_yM$.

The $C^1$-convergence of $L_\eps$ to $L_0$ as $\eps\downarrow0$ implies $C^1(\hat U)$-compactness of $\{L_\eps:\eps\in[0,\eps_0]\}$, which in turn gives equi-superdifferentiability of both $L_\eps$ and $-L_\eps$. Thus from \eqref{eq:normR} we get the uniform estimate
\begin{equation}
\label{eq:Ldiff}
L_\eps(S_x^v(\xi))-L_\eps(v)-DL_{\eps,v}(R_x^v(\xi))=o(|\xi|_{\tilde g}),
\end{equation}
where the error term is independent of $\eps\in[0,\eps_0]$, $x\in K$ and $v\in \hat K$; (note that a simple compactness argument shows that for $|\xi|_{\tilde g}$ small enough we have $S_x^v(\xi)\in \hat U$ for every $v\in \hat K$, thus in particular $L_\eps(S_x^v(\xi))$ is well defined). For $\sigma\in \mathcal G$, $x\in K$ and $\xi\in T_xM$ we define the curve  $[0,1]\ni s\mapsto\sigma_{\xi,s}:={\exp^{\tilde g}_{\sigma_s}}(W((x,\xi),\sigma_s))\in M$ and notice that $\sigma_{\xi,s}'=S_x^{\sigma_s'}(\xi)$. Defining the linear map $\xi\mapsto Q_{\eps,\sigma}(\xi):=\int_0^1 DL_{\eps,\sigma'_s}(R_x^{\sigma'_s}(\xi))\,ds$, from \eqref{eq:Ldiff} we get
\[
\int_0^1L_\eps(\sigma_{\xi,s}')\,ds-\int_0^1L_\eps(\sigma_s')\,ds-Q_{\eps,\sigma}(\xi)=o(|\xi|_{\tilde g}),
\]
with error term independent of  $\eps\in[0,\eps_0]$ and $\sigma\in\mathcal G$.

To conclude, pick $x\in K$, $o\in H$, $\eps\in[0,\eps]$ and let $\sigma=\sigma(x,o)\in \mathcal G$ optimal in \eqref{2global time separation} for the definition of $\ell_\eps(x,o)$. Then for every $\xi\in T_xM$  the defining properties of $W((x,\xi),\cdot)$ 
give that $\sigma_{\xi,0}=\exp^{\tilde g}_x(\xi)$ and $\sigma_{\xi,1}=o$, i.e.\ $\sigma_\xi$ is a competitor for the definition of $\ell_\eps (\exp^{\tilde g}_x(\xi),o)$ and thus
\[
\begin{split}
\ell_\eps (\exp^{\tilde g}_x(\xi),o)-\ell_\eps (x,o)&\geq  \int_0^1L_\eps(\sigma_{\xi,s}')\,ds-\int_0^1L_\eps(\sigma_s')\,ds\\
&=Q_{\eps,\sigma}(\xi)+o(|\xi|_{\tilde g}),
\end{split}
\]
with error term independent of $x,o$ and $\eps\in[0,\eps_0]$. This establishes the desired equi-superdifferentiability of $-\ell_\eps(\cdot,o)$.

The equi-Lipschitz property is now a standard consequence of equi-superdifferentiability, as is easily verified in charts.

Finally, Rademacher's theorem implies differentiability of $-\ell_\varepsilon(\cdot,o)$ a.e. The fact that the differential has $g_\eps$-norm 1 follows from 1-steepness (w.r.t.\ $g_\eps$)  of $-\ell_\eps(\cdot,o)$ and the fact that along $g_\eps$-geodesics toward $o$ it increases with speed exactly 1. \end{proof}

\begin{proposition}[Equi-superdifferentiable  Busemann limits]
\label{P2:semiconcave}

Let $(M,g)$ be a globally hyperbolic $C^1$-spacetime, $\gamma:~[0,\infty) \to~M$ a future complete timelike ray and $\bar \sigma:(a,b)\to M$ a $\gamma$-adapted curve \eqref{eq:wellsuited}.
Then there is a neighbourhood $U$ of the image of $\bar \sigma$ on which 
the approximate Busemann functions $\{\b_t\}_{t\geq T}$ of $\gamma$ are all locally equi-Lipschitz and equi-superdifferentiable.  In particular:
\begin{itemize}
    \item[(i)] The  Busemann function $\b$ is 
    superdifferentiable and locally Lipschitz on $U$,
    \item[(ii)] $d\b_t\to d\b$ a.e.\ on $U$, and
    \item[(iii)] $|d\b|=1$ a.e.\ on $U$.
\end{itemize}
\end{proposition}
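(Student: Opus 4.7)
\emph{Plan.} I plan to take $U$ as a union over $\bar t \in (a,b)$ of the neighbourhoods $U_{\bar t}$ of $\bar\sigma_{\bar t}$ supplied by Proposition~\ref{Prop: boundinitialtangents}, and to reduce the equi-regularity of $\{\b_t\}_{t \ge T}$ to that of Lorentzian distance functions via Proposition~\ref{Pr:Equisupdist}, by truncating each maximizing geodesic from $x \in U$ to $\gamma_t$ at a small fixed $g$-arclength $\lambda$.

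Fix any compact $K \subset U$. A finite subcover argument will let me assume $K \subset U_{\bar t}$ for a single $\bar t$, with uniform parameters $\lambda > 0$ and $T > 0$ furnished by Proposition~\ref{Prop: boundinitialtangents}. For $x \in K$ and $t \ge T$, let $\sigma^{x,t}$ be a $g$-arclength parametrized maximizing geodesic from $x$ to $\gamma_t$, and set $y^{x,t} := \sigma^{x,t}_\lambda$. Maximality gives $\ell(x,\gamma_t) = \lambda + \ell(y^{x,t},\gamma_t)$, while for $x'$ close enough to $x$ that $x' \ll y^{x,t}$ the reverse triangle inequality yields
\begin{equation*}
\b_t(x') - \b_t(x) \;=\; \ell(x,\gamma_t) - \ell(x',\gamma_t) \;\le\; \ell(x, y^{x,t}) - \ell(x', y^{x,t}).
\end{equation*}
Thus any superdifferential of $-\ell(\cdot, y^{x,t})$ at $x$ is also a superdifferential of $\b_t$ at $x$, with the \emph{same} error term.

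The hard part will be invoking Proposition~\ref{Pr:Equisupdist} along these variable endpoints $y^{x,t}$: although the $C^1$-precompactness from Proposition~\ref{Prop: boundinitialtangents} makes the closure $H_0$ of $\{y^{x,t} : x \in K,\,t \ge T\}$ compact, the required timelike relation $K \times H_0 \subset \{\ell > 0\}$ need not hold on all of $K$, since $y^{x,t}$ might only lie in the chronological future of $x'$ when $x'$ is near $x$. To remedy this I localize: since the initial tangents $(\sigma^{x,t})'_0$ live in a compact subset of the future timelike cone, bounded uniformly away from the null cone, each $q \in K$ admits a small open neighbourhood $V_q$ together with a compact $H_q \supset \{y^{x,t} : x \in V_q,\,t \ge T\}$ satisfying $V_q \times H_q \subset \{\ell > 0\}$. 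Proposition~\ref{Pr:Equisupdist} (at $\varepsilon = 0$) then yields equi-Lipschitz and equi-superdifferentiability of $\{-\ell(\cdot, y) : y \in H_q\}$ on $V_q$, which the displayed inequality transfers to $\{\b_t : t \ge T\}$. A finite cover of $K$ by such $V_q$ completes the argument, and since $K \subset U$ was arbitrary, the desired local equi-regularity holds on all of $U$.

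The three consequences then follow by soft arguments from the equi-regularity. For (i), $\b = \inf_t \b_t$ is a monotone limit of equi-Lipschitz functions, hence Lipschitz, and any $C^0$-cluster point of the uniformly bounded superdifferentials $v_{t,x}$ as $t \to \infty$ is readily seen to be a superdifferential of $\b$ at $x$. For (ii), at any point where $\b$ and every $\b_{t_n}$ along some countable cofinal sequence $t_n \nearrow \infty$ are all differentiable (a full-measure condition by Rademacher's theorem applied countably), every subsequential limit of $d\b_{t_n}(x)$ is a superdifferential of $\b$ at $x$ and must therefore equal $d\b(x)$ by uniqueness at a differentiability point, forcing $d\b_t(x) \to d\b(x)$. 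Finally (iii) follows from $|d\b_t|_g = |d\ell(\cdot, \gamma_t)|_g = 1$ a.e.\ (Proposition~\ref{Pr:Equisupdist} at $\varepsilon = 0$) combined with the a.e.\ convergence of~(ii).
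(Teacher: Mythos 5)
Your proposal is correct and follows essentially the same route as the paper's proof: truncate the maximizing geodesics from $x$ to $\gamma_t$ at $g$-arclength $\lambda$ (using Proposition~\ref{Prop: boundinitialtangents}), use the reverse triangle inequality to show $-\ell(\cdot,y^{x,t})$ supports $\b_t$ from above at $x$ up to a constant, invoke Proposition~\ref{Pr:Equisupdist} after a compactness/continuity localization to guarantee the needed chronological relation between base points and truncation points, and deduce (i)--(iii) by the same soft limiting arguments. The only cosmetic difference is that you write the support functions as differences rather than the paper's explicit $u^{x,t}(\cdot)=\ell(\gamma_0,\gamma_t)-\ell(\cdot,\sigma^{x,t}_\lambda)-\ell(\sigma^{x,t}_\lambda,\gamma_t)$, which are the same objects.
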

\begin{proof} Let $\bar t\in(a,b)$ and apply Proposition \ref{Prop: boundinitialtangents} to find  a compact neighbourhood  $K$  of $\bar \sigma_{\bar t}$ and  ${\lambda}>0$ so that the collection $ \mathcal G$  of maximizing $g$-geodesics parametrized with $g$-arclength from points in $K$ with endpoints in $\{\gamma_t,t\geq T\}$ is precompact in $C^1([0,{\lambda}],M)$ and moreover, such that each limit $\sigma$ of curves in $\mathcal G$ is timelike. We shall prove the conclusions (i),(ii),(iii) with $K$ in place of $U$: by the arbitrariness of $\bar t$ this suffices to conclude.

Notice that the set $H:=\overline{\{\sigma_{\lambda}:\sigma\in \mathcal G\}}$ is compact and assume for the moment that $K\times H\subset \{\ell>0\}$.

Let $x\in K$ and for $t\geq T$ pick $\sigma^{x,t}\in\mathcal G$ be starting from $x$ and ending in $\gamma_t$. 
Setting $u^{x,t}(\cdot):=\ell(\gamma_0,\gamma_t)-\ell(\cdot,\sigma^{x,t}_{\lambda})-\ell(\sigma^{x,t}_{\lambda},\gamma_t)$, the reverse triangle inequality yields
\begin{equation}
    \label{eq:usupport}
    \begin{split}
    \b_t(x')&\leq u^{x,t}(x')\qquad \forall x'\in K,\\
    \b_t(x)&= u^{x,t}(x)
    \end{split}
\end{equation}
and thus also
\begin{equation}
    \label{eq:btuxt}
    \b_t(x')=\inf_{x\in K}u^{x,t}(x')\qquad\forall x'\in K.
\end{equation}
Since $\sigma^{x,t}_{\lambda}\in H$, Proposition \ref{Pr:Equisupdist} ensures that the family $\{u^{x,t}\}_{x \in K,t \ge T}$ of functions on $K$ is equi-Lipschitz and equi-superdifferentiable. Hence from \eqref{eq:btuxt} we see that the approximate Busemann functions $(\b_t)_{t\ge T}$ are equi-Lipschitz on $K$ and from \eqref{eq:usupport} that they are equi-superdifferentiable.

To reduce to the case $K\times H\subset \{\ell>0\}$ we argue as follows. Pick $x\in K$ and $\sigma^{x,t}$ as above and recall that our choice of $K$ ensures 
 $H_x:=\overline{\{\sigma^{x,t}_{\lambda}:t\geq T\}}$ is contained in $I^+(x)$. By continuity of $\ell$ there is a compact neighbourhood $K_x$ of $x$ such that $K_x\times H_x\subset\{\ell>0\}$ and by compactness of $K$ a finite collection $x_1,\ldots,x_n\in K$ such that $K\subset \cup_iK_{x_i}$. Then the above arguments ensures that the $\b_t$'s have the desired uniform properties in each of the $K_{x_i}$'s and hence on their finite union.

Now the fact that $\b$ is locally Lipschitz follows, as in the above, from the fact that the infimum of a family of equi-Lipschitz functions is Lipschitz. For superdifferentiability we shall verify below the claims that: any family $(v_t)\subset T^*_xM$ for which  $v_t$ is a  superdifferential of $\b_t$ at $x$  must be precompact and that any limit of subsequences as $t\uparrow\infty$ is a superdifferential of $\b$ (so that, in particular, $\b$ is superdifferentiable on $K$). 

The precompactness claim follows noticing that if $v_t$ is a superdifferential of $\b_t$ at $x\in I^-(\gamma)$, then $|v_t|_{\tilde g^*}$ is  bounded by the Lipschitz constant of $\b_t$. Now let $t_n\uparrow\infty$ be so that the superdifferentials $v_{t_n}\in T_xM$ have a limit $v$.  Equi-superdifferentiability of the $\b_t$'s  in a neighbourhood of $x$ tells that
\[
\sup_{t\geq t_0}\big\{\b_t(\exp^{\tilde g}_x(w))- \b_t(x)-(v_t,w)\big\}\leq o(|w|_{\tilde g}).
\]
Passing to the limit in this we see that $v$ is a superdifferential of $\b$ at $x$. Thus (i) is proved.

(ii) follows along the same lines taking into account Rademacher's theorem and the fact that if a function is differentiable at a point, then the differential is the only superdifferential at that point.

(iii) is a direct consequence of (ii) and $|d \b_t|=1$ a.e., which in turn follows from the final assertion of Proposition \ref{Pr:Equisupdist}. 
\end{proof}

\begin{remark}[Differentials of time separations converge a.e.]\label{Re:Semico}{\rm An analogous  argument yields the following. Let $\{g_\varepsilon: \varepsilon > 0\}$ be a good approximation of a globally hyperbolic $C^1$-Lorentzian metric $g$ on $M$. Then for every $o\in M$ and every compact subset $X\subset I^-(o)$, we have $d\ell_\varepsilon(\cdot,o) \to d\ell(\cdot,o)$ a.e.~on $X$.}\fr
\end{remark}

\section{The Busemann function is \texorpdfstring{$p$-superharmonic near $\sigma$}{p-superharmonic near sigma}}\label{Section: pdAlembcomp}

In this section we start investigating the effects of the lower Ricci bound on the Busemann function.  The goal of this part is to establish $p$-superharmonicity of $\b$ on a neighbourhood $U$ of each $\gamma$-adapted curve $\sigma$, 
cf.~Corollary \ref{L2:harmonicity} below.   As is customary in the proof of splitting theorems across different signatures, such result is obtained as limit of estimates in place for the approximate Busemann functions $\b_t$, given here in  Proposition \ref{P2:dalembert comparison}. 
In principle, the proof of this latter result {might} follow by combining (i) the nonsmooth d'Alembert comparison theorem of Beran--Braun--Calisti--Gigli--McCann--Ohanyan--Rott--S\"amann \cite{BeranOctet24+} with (ii) the main result of Braun and Calisti \cite{BraunCalisti2024}, which implies weighted spacetimes with $C^1$-metrics and distributionally nonnegative timelike Ricci curvature satisfy both the timelike measure contraction property of Cavalletti--Mondino \cite{CavallettiMondino20+} and its variant by Braun \cite{Braun22.6+}. In favor of a simpler and more streamlined presentation, we give a self-contained proof based on the approximation of  the metric tensor as in Lemma \ref{Le:Good approx}. This is naturally based on the smooth d'Alembert comparison theorem valid outside the cut locus, as in our prequel 
\cite{BGMOS24+}. It was Case \cite{Case10} who extended Eschenburg's result~\cite{Eschenburg88} to smooth Bakry--\smash{\'Emery} spacetimes. Since approximation worsens the distributional timelike Ricci bound $K=0$ (recall Lemma \ref{Le:Good approx}), we will need to extend \cite[Lem.~5.4]{Case10} to nonzero $K$. (In the unweighted case, such a result goes back to 
Treude~\cite{Treude11} and Treude--Grant \cite{TreudeGrant13}.) The proof is standard, yet we include some details.

We shortly fix some notation. Given $\kappa\in\R$ let $\sin_\kappa$ be the unique function obeying the Jacobi equation $s'' + \kappa s = 0$ with $s(0) = 0$ and $s'(0) =1$. Explicitly,
\begin{align*}
\sin_\kappa(t) = \begin{cases} 
{\kappa}^{-1/2}\sin({\kappa}^{1/2}t) & \textnormal{if }\kappa > 0,\\
t & \textnormal{if }\kappa = 0,\\ 
{|\kappa|}^{-1/2}\sinh(|\kappa|^{1/2}t 
) & \textnormal{otherwise}.
\end{cases}
\end{align*}
Let $\pi_\kappa$ denote the first positive root of $\sin_\kappa$, i.e.~$\pi_\kappa = \pi/\sqrt{\kappa}$ if $\kappa > 0$, and conventionally $\pi_\kappa = \infty$ otherwise. We also consider  the generalized cotangent function $\cot_\kappa \colon [0,\pi_\kappa)\to \R$ 
given by
\begin{align*}
\cot_\kappa(t) := \frac{\sin_\kappa'(t)}{\sin_\kappa(t)}.
\end{align*}

In the sequel, given $0\neq p< 1$ and $V\in C^1(M)$ we define
\begin{align}\label{Eq:Ops}
\begin{split}
\Box^{(g,V)}u &:= -\textnormal{div}\,\nabla u + g^*(dV,d u),\\
\Box^{(g,V)}_p u &:= -\textnormal{div}\Big(\frac{\nabla u}{\vert du\vert^{2-p}}\Big) + g^*\Big(dV, \frac{d u}{\vert du\vert^{2-p}}\Big)
\end{split}
\end{align}
at every point where these expressions make sense pointwise for the given function $u$. Here  $g^*$ is the bilinear form induced by duality by $g$ on the cotangent space, and $\smash{\textnormal{div}\,X = 
{\vert\!\det g\vert}^{-1/2}\,\partial (X^i\,
{\vert\!\det g\vert^{1/2}})/\partial x^i}$ is the usual divergence operator induced by the $C^1$-metric $g$.  Observe the operators in \eqref{Eq:Ops} simply correspond to the divergence $\nabla^{(g,V)} \cdot X := \textnormal{div}\, X - X(dV)$ induced by the weighted volume measure
\begin{align*}
\vol^{(g,V)} := \mathrm{e}^{-V}\,\vol
\end{align*}
applied to the gradient and the $p$-gradient of $u$, respectively.

\begin{lemma}[D'Alembert comparison theorem outside cut locus]
\label{Le:Strong dalembert} 
Let $(M,g)$ be a globally hyperbolic $n$-dimensional $C^\infty$-spacetime and $V\in {C}^\infty(M)$. Let $K\in\R$, $N\in[n,\infty)$  and assume that   $\smash{\Rc^{(g,N,V)}\geq K}$  holds in all timelike directions. 

Then for every $0\neq p<1$ and $o\in M$, the inequality
\begin{align*}
\square_p^{(g,V)} (-\ell(\cdot,o)) = \square^{(g,V)} (-\ell(\cdot,o)) \leq (N-1)\cot_{K/(N-1)}(\ell(\cdot,o))
\end{align*}
holds at every point in $I^-(o)$ outside the past timelike cut locus of $o$.
\end{lemma}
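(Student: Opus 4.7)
The plan is to combine (i) the observation that $-\ell(\cdot,o)$ is smooth with unit-norm gradient outside the cut locus, which collapses $\square_p^{(g,V)}$ to $\square^{(g,V)}$, with (ii) the classical shape-operator Riccati comparison along the unique maximizer from $x$ to $o$, upgraded to its Bakry--\'Emery form via a simple weighted Cauchy--Schwarz. First I would verify that at any $x$ outside the past timelike cut locus of $o$ the distance $r := \ell(\cdot,o)$ is $C^\infty$ near $x$ with $g(\nabla r,\nabla r) = 1$, so $|du|_g = 1$ for $u := -r$. The factor $|du|^{2-p}$ in the definition of $\square_p^{(g,V)} u$ is then identically $1$ at $x$, giving the asserted equality $\square_p^{(g,V)} u(x) = \square^{(g,V)} u(x)$; this leaves only the upper bound to establish.

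\textbf{Riccati along the maximizer.} Next I would reduce the upper bound to an ODE comparison along the unique future-directed $g$-arclength parametrized maximizing geodesic $\gamma:[0,L]\to M$ from $x$ to $o$, where $L = \ell(x,o)$ and $\gamma' = \nabla u\circ\gamma$. Write $A(s)$ for the shape operator of the level set $\{r = L-s\}$ at $\gamma(s)$ (self-adjoint with respect to the induced Riemannian metric $-g$ on the $(n-1)$-dimensional spacelike leaf), and set $H(s) := \mathrm{tr}\,A(s) = \mathrm{div}(\nabla u)(\gamma(s))$. The hypersurface-orthogonal Raychaudhuri equation $A' + A^2 + R(\cdot,\gamma')\gamma' = 0$, traced and combined with Cauchy--Schwarz $\mathrm{tr}(A^2) \geq H^2/(n-1)$, produces
\[
H'(s) \leq -\frac{H(s)^2}{n-1} - \Rc(\gamma'(s),\gamma'(s)).
\]

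\textbf{Bakry--\'Emery upgrade.} Introducing the weighted expansion $H_V(s) := H(s) - \gamma'(V)(\gamma(s))$, the chain rule (using $\nabla_{\gamma'}\gamma' = 0$) gives $H_V' = H' - \Hess V(\gamma',\gamma')$. Rewriting $\Rc = \Rc^{(g,N,V)} - \Hess V + \frac{dV\otimes dV}{N-n}$ (with the last term understood as $0$ when $N = n$) and applying the elementary inequality $\frac{a^2}{p} + \frac{b^2}{q} \geq \frac{(a+b)^2}{p+q}$ with $(a,b,p,q) = (H,-\gamma'(V),n-1,N-n)$ (saturating trivially when $N=n$, $V\equiv 0$) yields the scalar Riccati inequality
\[
H_V'(s) \leq -\frac{H_V(s)^2}{N-1} - \Rc^{(g,N,V)}(\gamma',\gamma') \leq -\frac{H_V(s)^2}{N-1} - K.
\]

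\textbf{Comparison and conclusion.} Finally, I would compare $H_V$ with the model $\bar H_V(s) := -(N-1)\cot_{K/(N-1)}(L-s)$, which satisfies the corresponding Riccati relation with equality (direct differentiation of $\cot_\kappa$ via $\cot_\kappa' = -\kappa - \cot_\kappa^2$). The asymptotics $H_V(s) = -(n-1)/(L-s) + O(1)$ as $s\to L^-$ (Taylor expansion of $r$ in normal coordinates at $o$, plus boundedness of $\gamma'(V)$) and $\bar H_V(s) = -(N-1)/(L-s) + O(L-s)$ give $\liminf_{s\to L^-}(H_V - \bar H_V) \geq 0$. Setting $\delta := H_V - \bar H_V$ and subtracting, the linear differential inequality $\delta' + \frac{H_V + \bar H_V}{N-1}\,\delta \leq 0$ emerges; its integrating factor shows $\delta \cdot \exp\!\int \frac{H_V + \bar H_V}{N-1}\,dr$ is non-increasing, and tracking the $1/(L-s)$ blow-up rates shows this product tends to $0$ as $s\to L^-$, forcing $\delta \geq 0$ on $[0,L)$. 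Evaluating at $s = 0$ delivers $\square^{(g,V)} u(x) = -H_V(0) \leq -\bar H_V(0) = (N-1)\cot_{K/(N-1)}(\ell(x,o))$, the claimed bound. The only genuinely delicate point is the asymptotic behaviour of $H_V$ at the focal point $o$; once in hand, the remainder is a routine Riccati comparison of the flavor of Case \cite{Case10} extended from $K=0$ to general $K$.
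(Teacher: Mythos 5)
Your proposal is correct and follows essentially the same route as the paper: the paper likewise reduces to the scalar Riccati inequality $H_V' \leq -K - H_V^2/(N-1)$ along the unique maximizer (citing Case's Lemma 5.4 for the weighted trace/Cauchy--Schwarz step you re-derive) and concludes by a standard Riccati comparison with $(N-1)\cot_{K/(N-1)}$. Your version merely fills in details the paper leaves implicit --- the Raychaudhuri derivation, the $\tfrac{a^2}{p}+\tfrac{b^2}{q}\geq\tfrac{(a+b)^2}{p+q}$ upgrade, and the focal-point asymptotics closing the comparison --- all of which check out.
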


\begin{proof} The case $K=0$ is due to Case \cite[Lem.~5.4]{Case10}.

Let $x\in I^-(o)$ not belong to the timelike cut locus of $o$ and let $\gamma$ denote the unique past-directed  proper-time parametrized timelike geodesic from $o$ to $x$. Set $u = (-\square_p^{(g,V)} \ell(\cdot,o))\circ\gamma$. As in \cite[Lem.~5.4]{Case10}\footnote{The parameter $m$ in  \cite{Case10} corresponds to $N-n$ in our case. Moreover, Case uses the opposite signature convention on $g_{ij}$.}, on the half-open interval $(0,l(x,o)]$ we estimate
\begin{align*}
u' \leq -\Rc^{(g,N,V)}(\dot{\gamma},\dot\gamma) - \frac{1}{N-1}\,u^2 \leq -K - \frac{1}{N-1}\,u^2.
\end{align*}
On the same interval, the assignment $v(t) := (N-1)\cot_{K/(N-1)}(t)$ solves the Riccati-type ODE 
$v' = -K - \frac{1}{N-1}\,v^2$, thus by a standard Riccati comparison argument it follows that $u\leq v$ holds in every interval $[0,a)$ on which $v$ is smooth. This suffices to conclude.

We point out that  for $K>0$ the maximal such interval is $[0,\pi_{K/(N-1)})$, a fact that can be used to prove the timelike Bonnet--Myers theorem of Cavalletti--Mondino \cite[Prop.~5.10]{CavallettiMondino20+}.
\end{proof}

Let $C^1_0(U)$ denote the continuously differentiable functions with compact support in $U\subset M$.

\begin{proposition}[Weak d'Alembert comparison for Lorentz distances]\label{P2:dalembert comparison} 
Let $(M,g)$ be a globally hyperbolic $C^1$-spacetime, $V\in C^1(M)$, $K\in \R$ and $N\in[n,\infty)$. Assume that the   Bakry--\'Emery energy condition $\smash{\Rc^{(g,N,V)}\geq 0}$ holds timelike distributionally \eqref{B-E Ric}. Put $f^o:=-\ell(\cdot,o)$ and let $0\neq p<1$.

Then for every  $0 \le \phi\in C_0^1(I^-(o))$ we have:
\begin{align*}
&\int \Big[\frac{(N-1)\phi}{f^o} 
+ g^*\Big(d\phi + \phi d V, \frac{d f^o}{\vert d f^o\vert^{2-p}}\Big)\Big] d\vol^{(g,V)} \le 0.
\end{align*}
\end{proposition}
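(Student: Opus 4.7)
The plan is to apply the smooth pointwise $p$-d'Alembert comparison of Lemma~\ref{Le:Strong dalembert} to the good approximations $(g_\varepsilon, V_\varepsilon)$ from Lemma~\ref{Le:Good approx}, upgrade the result to a distributional inequality via the semi-concavity of $-\ell_\varepsilon(\cdot, o)$, and pass to the limits $\varepsilon \downarrow 0$ and then $\delta \downarrow 0$ (where $\delta$ is the Ricci deficit introduced by the approximation). Fix $0 \leq \phi \in C^1_0(I^-(o))$ with compact support $C$. For each $\delta > 0$, Lemma~\ref{Le:Good approx} supplies $\varepsilon_0 > 0$ such that whenever $\varepsilon \in (0, \varepsilon_0)$, the weighted Ricci bound $\Rc^{(g_\varepsilon, N, V_\varepsilon)}(v,v) \geq -\delta\, g_\varepsilon(v,v)$ holds for every $v \in TM|_C$ in a bounded range of $g$-timelike directions wide enough to contain the velocities of all $g_\varepsilon$-maximizers from $C$ to $o$; and $C \subset I^-_\varepsilon(o)$ for such $\varepsilon$ since $g_\varepsilon \prec g$ and $\ell_\varepsilon \to \ell$ uniformly on $C \times \{o\}$ by Braun--Calisti.

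For each such $\varepsilon$, Lemma~\ref{Le:Strong dalembert} applied with $K = -\delta$ gives
\[
\Box^{(g_\varepsilon, V_\varepsilon)}_p f^o_\varepsilon \leq (N-1) \cot_{-\delta/(N-1)}(\ell_\varepsilon(\cdot, o))
\]
at each point of $I^-_\varepsilon(o)$ outside the $g_\varepsilon$-timelike past cut locus $\Cut_\varepsilon(o)$. I would then promote this pointwise bound to a distributional one using Proposition~\ref{Pr:Equisupdist}: $f^o_\varepsilon$ is locally semi-concave with $|df^o_\varepsilon|_{g_\varepsilon} = 1$ a.e., so by a classical Calabi-type argument the distributional $p$-d'Alembertian of $f^o_\varepsilon$ coincides with its pointwise expression on the regular set of full $\vol_{g_\varepsilon}$-measure and picks up a non-positive singular part on $\Cut_\varepsilon(o)$. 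Integrating against $\phi$ and using the weighted integration-by-parts identity yields
\[
\int \Big[\frac{(N-1)\phi}{f^o_\varepsilon} + g_\varepsilon^*\Big(d\phi + \phi\, dV_\varepsilon, \frac{df^o_\varepsilon}{|df^o_\varepsilon|^{2-p}_{g_\varepsilon}}\Big)\Big] d\vol^{(g_\varepsilon, V_\varepsilon)} \leq R_\varepsilon(\delta),
\]
where the remainder $R_\varepsilon(\delta)$ absorbs the discrepancy between $\cot_{-\delta/(N-1)}(\ell_\varepsilon)$ and $1/\ell_\varepsilon$ and tends to $0$ as $\delta \to 0$.

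The passage to the limit then reduces to dominated convergence: the $C^1_{loc}$-convergences $g_\varepsilon \to g$ and $V_\varepsilon \to V$ yield $d\vol^{(g_\varepsilon, V_\varepsilon)} \to d\vol^{(g,V)}$ together with locally uniform convergence of $dV_\varepsilon$; uniform convergence $\ell_\varepsilon \to \ell$ on $C \times \{o\}$ handles the $1/f^o_\varepsilon$ term; and --- crucially --- $df^o_\varepsilon \to df^o$ a.e.\ on $C$ by Remark~\ref{Re:Semico}, with the uniform bound $|df^o_\varepsilon|_{g_\varepsilon} = 1$ a.e.\ keeping the $p$-gradient terms dominated on $\supp\phi$. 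Taking $\varepsilon \downarrow 0$ with $\delta$ fixed and then $\delta \downarrow 0$ produces the claimed inequality.

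The hard part will be the distributional lift in the second paragraph: the pointwise $p$-d'Alembert bound breaks down at the $g_\varepsilon$-cut locus, where $f^o_\varepsilon$ is only semi-concave. The standard resolution is Calabi's decomposition of the distributional d'Alembertian of a semi-concave function into a regular a.e.\ part (matching the pointwise bound) plus a non-positive singular measure on the cut locus, so that integration against $\phi \ge 0$ preserves the inequality; the hypothesis $|df^o_\varepsilon|_{g_\varepsilon} = 1$ a.e.\ is what ensures the $p$-gradient coincides with the gradient a.e.\ and thereby reduces the $p$-d'Alembertian in this analysis to the more familiar weighted d'Alembertian. Alternatively one may import the smooth-case weak $p$-d'Alembert comparison from the authors' prequel \cite{BGMOS24+} as a black box.
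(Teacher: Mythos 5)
Your proposal is correct and follows essentially the same route as the paper: apply Lemma~\ref{Le:Strong dalembert} with $K=-\delta$ to the good approximations $(g_\varepsilon,V_\varepsilon)$, upgrade the pointwise bound to a weak one across the cut locus (the paper does this by running the argument of \cite[Prop.~7]{BGMOS24+} verbatim, which is exactly your ``black box'' alternative), and then pass to the limit $\varepsilon\downarrow 0$, $\delta\downarrow 0$ using Remark~\ref{Re:Semico}, the uniform convergence $\ell_\varepsilon\to\ell$, and dominated convergence. The only cosmetic difference is that you sketch the Calabi-type singular-part argument explicitly where the paper simply cites the prequel.
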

Note: the term $|df^o|$ in the statement as well we the analogous $|df_\eps^o|$ occurring in the proof is a.e.\ equal to 1. Hence in principle we could omit them in favor of simpler looking formulas. However, doing so would hide the fact that it is the ellipticity of the $p$-d'Alembertian that makes possible the proof of the global estimate (and the whole proof plan we are pursuing here), thus we prefer to keep it.

\begin{proof} Let $\{g_\varepsilon : \varepsilon > 0\}$ be the good approximation of $g$ from Lemma \ref{Le:Good approx}; since $g_\varepsilon$ is smooth and again globally hyperbolic for every $\varepsilon > 0$, the hypotheses on the timelike cut locus from our prequel  \cite[Prop.~7]{BGMOS24+} are trivially satisfied. We  apply Lemma \ref{Le:Good approx} to the choices $C := J(\supp\phi,o)$ and $\smash{\kappa := \inf_{x\in \supp\phi} \ell(x,o)^2}>0$ to deduce that for any $\delta > 0$ there exists $\varepsilon_0 > 0$ such that for every $\varepsilon\in(0,\varepsilon_0)$ and every $\smash{v\in TM\big\vert_C}$ with $g(v,v) \geq \kappa$ and $\tilde{g}(v,v)\leq c$, we have $\smash{\Rc^{(g_\varepsilon,N,V_\varepsilon)}(v,v) \geq -\delta\,g_\varepsilon(v,v)}$. Let $\ell_\varepsilon$ denote the time separation function induced by $g_\varepsilon$, and recall $\ell_\varepsilon\to \ell$ uniformly as $\varepsilon \to 0$ on the compact subset $\supp\phi\times \{o\}$ of the chronological relation $\{\ell\geq 0\}$ as in Braun and Calisti \cite[Cor.~3.5]{BraunCalisti2024}. Employing  Lemma \ref{Le:Strong dalembert} in place of Eschenburg's d'Alembert comparison theorem in our prior proof of \cite[Prop.~7]{BGMOS24+}, for $\eps\in(0,\eps_0)$ we get
\begin{align*}
\int \Big[(N-1)\phi\cot_{\frac{-\delta}{N-1}}(f_\eps^o)
+ 
\frac{g^*_\varepsilon(d\phi + \phi d V_\varepsilon, df_\eps^o)}{\vert df_\eps^o\vert_\varepsilon^{2-p}}\Big]\,d\vol^{(g_\varepsilon,V_\varepsilon)}\leq0,
\end{align*}
where $f^0_\eps:=-\ell_\eps(\cdot,o)$. Now we let $\varepsilon \to 0$ in this expression. Since $g_\varepsilon \to g$ and
$V_\varepsilon \to V$ locally uniformly,  the $\vol$-density of $\smash{\vol^{(g_\varepsilon,V_\varepsilon)}}$ converges locally uniformly to the $\vol$-density of $\smash{\vol^{(g,V)}}$. Combined with the previously mentioned locally uniform convergence of $\ell_\varepsilon$ and Lebesgue's dominated convergence theorem,
\begin{align*}
\lim_{\varepsilon\to 0}\int \cot_{\frac{-\delta}{N-1}}(f^o_\eps)\,\phi\,d\vol^{(g_\varepsilon,V_\varepsilon)} 
= \int \cot_{\frac{-\delta}{N-1}}(f^o)\,\phi\,d\vol^{(g,V)}.
\end{align*}
Moreover --- recalling from Proposition \ref{Pr:Equisupdist} that the Lipschitz constant $\textnormal{Lip}\,\ell_\varepsilon$ of $\ell_\varepsilon$ is uniformly bounded for sufficiently small $\varepsilon$ --- Remark \ref{Re:Semico} yields $d\ell_\varepsilon(\cdot,o) \to d\ell(\cdot,o)$  a.e.\ on $\supp\phi$ as $\varepsilon \to 0$. Since $g_\varepsilon \to g$ locally uniformly and $g$ is nonsingular, we see that $g^*_\eps\to g^*$ locally uniformly as well as $\varepsilon \to 0$. Hence Lebesgue's theorem again yields
\begin{align*}
\lim_{\varepsilon\to 0}\int  \frac{g^*_\varepsilon(d\phi,df^o_\eps)}{\vert df^o_\eps\vert_\varepsilon^{2-p}}\,d\vol^{(g_\varepsilon,V_\varepsilon)} 
= \int \frac{g^*(d\phi,df^o)}{\vert df^o\vert^{2-p}}\,d\vol^{(g,V)}.
\end{align*} 
With a similar argument using $d V_\varepsilon \to d V$ locally uniformly (as $V\in C^1$ is regularized by convolution in charts), we get
\begin{align*}
&\lim_{\varepsilon\to 0}\int \frac{g^*_\varepsilon(dV_\varepsilon,d f^o_\eps)}{\vert df^o_\eps\vert_\varepsilon^{2-p}}\,d\vol^{(g_\varepsilon,V_\varepsilon)}
= \int  \frac{g^*(dV,d f^o)}{\vert d f^o\vert^{2-p}}\,d\vol^{(g,V)}.
\end{align*}
This results in the inequality
\begin{align*}
&\int \Big[(N-1)\phi \cot_{\frac{-\delta}{N-1}}(f^o) 
+  \frac{g^*(d\phi + \phi dV,d f^o)}{\vert d f^o\vert^{2-p}}\Big]\,d\vol^{(g,V)} \leq 0
\end{align*}
and letting $\delta \to 0$ gives the claim.
\end{proof}

The following is now argued as in the proof of \cite[Cor.~8]{BGMOS24+},
using Proposition \ref{P2:semiconcave} and its corollaries. 

\begin{corollary}[Weak d'Alembert comparison for Busemann limits] 
\label{L2:harmonicity}
With the same assumptions and notation of Proposition~\ref{P2:dalembert comparison}, let $\gamma:[0,\infty)\longrightarrow M$ be a future complete timelike ray with Busemann function \eqref{Busemann limit} and let $\sigma:(a,b)\to M$ be $\gamma$-adapted \eqref{eq:wellsuited}.
Then there is a neighbourhood $U$ of the image of $\sigma$ such that for any $\phi\in C^1_0(U)$ nonnegative we have
\begin{equation}
    \label{eq:bsuperp}
\int g^*\Big(d\phi+\phi dV,\frac{d\b}{|d\b|^{2-p}}\Big)\,d\vol^{(g,V)}\leq 0
\end{equation}
\end{corollary}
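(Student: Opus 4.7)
The plan is to apply Proposition \ref{P2:dalembert comparison} to the approximate Busemann functions and pass to the limit $t\to\infty$, very much in the spirit of \cite[Cor.~8]{BGMOS24+}. The key identification is that, writing $f^o := -\ell(\cdot,o)$, we have
\[
\b_t(x) = \ell(\gamma_0,\gamma_t) + f^{\gamma_t}(x),
\]
so $d\b_t = df^{\gamma_t}$ on $I^-(\gamma_t)$.

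First I would fix the neighbourhood $U$ of the image of $\sigma$ from Proposition \ref{P2:semiconcave}. On $U$ the family $\{\b_t\}_{t\geq T}$ is locally equi-Lipschitz and equi-superdifferentiable, $d\b_t\to d\b$ almost everywhere as $t\uparrow\infty$, and $|d\b_t|=|d\b|=1$ a.e. Given a nonnegative $\phi\in C^1_0(U)$, compactness of $\spt\phi\subset I^-(\gamma) = \bigcup_{t\geq 0} I^-(\gamma_t)$ furnishes $T_0\geq T$ such that $\spt\phi\subset I^-(\gamma_t)$ for every $t\geq T_0$. For each such $t$, Proposition \ref{P2:dalembert comparison} with $o=\gamma_t$ reads
\[
\int \Big[\frac{(N-1)\phi}{f^{\gamma_t}} + g^*\Big(d\phi + \phi\,dV, \frac{d\b_t}{\vert d\b_t\vert^{2-p}}\Big)\Big]\,d\vol^{(g,V)} \leq 0.
\]

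Next I would take $t\to\infty$ term by term. For the first term, the reverse triangle inequality together with compactness of $\spt\phi$ in $I^-(\gamma)$ yields $\ell(x,\gamma_t)\to\infty$ uniformly for $x\in\spt\phi$; hence $(f^{\gamma_t})^{-1}\to 0$ uniformly on $\spt\phi$ and the first integral vanishes in the limit. For the second term, the a.e.\ identity $|d\b_t|=1$ on $U$ (from Proposition \ref{Pr:Equisupdist}, transferred to $\b_t$ through the above identification) makes the vector $d\b_t/|d\b_t|^{2-p}$ coincide a.e.\ with $d\b_t$ and uniformly $\tilde g$-bounded by the equi-Lipschitz constant of $\{\b_t\}_{t\geq T}$ on $U$. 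Together with the pointwise a.e.\ convergence $d\b_t\to d\b$ on $U$, Lebesgue's dominated convergence theorem gives
\[
\lim_{t\to\infty}\int g^*\Big(d\phi+\phi\,dV, \frac{d\b_t}{\vert d\b_t\vert^{2-p}}\Big)\,d\vol^{(g,V)} = \int g^*(d\phi+\phi\,dV, d\b)\,d\vol^{(g,V)},
\]
and the final a.e.\ identity $|d\b|=1$ on $U$ from Proposition \ref{P2:semiconcave}(iii) lets me rewrite the right-hand side as the desired expression with $d\b/\vert d\b\vert^{2-p}$.

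The main obstacle in this plan is the passage to the limit in the $p$-gradient term, because for $p<0$ the exponent $2-p$ in the denominator is large and a nontrivial zero set of $|d\b_t|$ would ruin the domination. This is exactly where the hypothesis that $\sigma$ is $\gamma$-adapted is decisive: it is what delivers the neighbourhood $U$ on which the \emph{uniform} a.e.\ unit-norm identity $|d\b_t|\equiv 1$ and the local equi-Lipschitz property hold, making the integrands harmless and the dominated convergence argument routine.
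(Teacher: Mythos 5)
Your proposal is correct and follows essentially the same route as the paper: choose $U$ from Proposition \ref{P2:semiconcave}, apply Proposition \ref{P2:dalembert comparison} with $o=\gamma_t$ (noting $d\b_t=df^{\gamma_t}$), send the $(N-1)/\ell(\cdot,\gamma_t)$ term to zero, and pass to the limit in the $p$-gradient term using the a.e.\ convergence $d\b_t\to d\b$, the a.e.\ identity $|d\b_t|=1$, and the equi-Lipschitz bounds from Proposition \ref{P2:semiconcave}. Your closing remark correctly identifies why the $\gamma$-adaptedness of $\sigma$ is what makes the dominated convergence step harmless.
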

\begin{proof} We pick as $U$ the neighbourhood of the image of $\sigma$ given by Proposition \ref{P2:semiconcave}. Fix $0 \le \phi\in C^1_0(I^-(\gamma))$ and notice that for $t$ sufficiently big its support is contained in $I^-(\gamma_t)$. Then Proposition \ref{P2:dalembert comparison} gives
\[
\int g^*\Big(d\phi+\phi dV,\frac{d\b_t}{|d\b_t|^{2-p}}\Big)\,d\vol^{(g,V)}\leq \int \phi\frac{N-1}{\ell(\cdot,\gamma_t)}\,d\vol^{(g,V)}.
\]
Letting $t\uparrow\infty$ we see that the right hand side goes to 0, while the left one (recalling the estimates and convergences in Proposition \ref{P2:semiconcave}, that can be applied thanks to our assumptions on $\phi$) goes to the left hand side of \eqref{eq:bsuperp}.    
\end{proof}

\begin{remark}[On the timelike cutlocus] {\rm In the d'Alembert comparison result of Proposition \ref{P2:dalembert comparison}, we do not presume that the cut locus of $o$ is closed or has zero measure, as we did in the smooth case \cite[Prop.\ 7]{BGMOS24+}. In fact, to our knowledge, no notion of cut locus appears in the literature of low regularity spacetime metrics. Because we now assume global hyperbolicity (which we did not in \cite[Prop.\ 7]{BGMOS24+}), these requirements on the cut locus are trivially satisfied by the globally hyperbolic good approximations from Lemma \ref{Le:Good approx}. Since Proposition \ref{P2:dalembert comparison} is proved using these approximations, this suffices to get the desired conclusion.}\fr
\end{remark}

\section{The Busemann function is \texorpdfstring{$p$-harmonic near $\sigma$}{p-harmonic near sigma}}\label{Section: pharmonicity}

In this section we start working with a complete timelike line, rather than just with a future complete timelike ray as before.

Recall that a complete timelike line in $M$ --- hereafter simply a {\bf line} --- is a curve $\gamma:\R\to M$ such that
\[
\ell(\gamma_s,\gamma_t)=t-s\qquad\forall s,t\in\R,\ s<t.
\]
Lemma \ref{le:regularityadapted} shows such a line to be a maximizing $g$-geodesic both future and past complete.

To such $\gamma$ we associate two Busemann functions  (one forward, one backward) as follows. We first define for $t\geq 0$ the functions 
\begin{equation}\label{approximate bi-Busemann}
\begin{split}
    \b_t^+(x)&:=\ell(\gamma_0,\gamma_t)-\ell(x,\gamma_t)=t-\ell(x,\gamma_t),\\
    \b_t^-(x)&:=\ell(\gamma_{-t},x)-\ell(\gamma_0,\gamma_t)=\ell(\gamma_{-t},x)-t.
\end{split}
\end{equation}
Then we put 
\begin{equation}\label{bi-Busemann limit}
\begin{split}
\b^+(x)&:=\lim_{t\to+\infty}\b_t^+(x)=\inf_{t\geq 0}\b_t^+(x),\\
\b^-(x)&:=\lim_{t\to+\infty}\b_t^-(x)=\sup_{t\geq 0}\b_t^-(x).
\end{split}
\end{equation}
The reverse triangle inequality shows that the $\inf$ and $\sup$ in these formulas coincide with the limits, making  clear that the functions $\b^\pm$ are well defined and finite on $I(\gamma):=I^-(\gamma)\cap I^+(\gamma)=\cup_{t\geq 0}(I^-(\gamma_t)\cap I^+(\gamma_{-t}))$. Another consequence of the reverse triangle inequality is that
\[
\b^+(x)\geq \b^-(x)\qquad\forall x\in I(\gamma)
\]
and a direct computation shows that
\[
\b^+(\gamma_t)=\b^-(\gamma_t)=t\qquad\forall t\in\R.
\]
The results established in the previous sections apply directly to $\b^+$, it being the Busemann function associated to the restriction of the complete timelike line to $[0,+\infty)$. Analogous statements are in place for $\b^-$ --- with similar proofs (or by time reversal) ---  provided we deal with the `past' version of $\gamma$-adapted segment.

In fact we shall be even more restrictive and introduce the notion of segment adapted to the complete timelike line $\gamma$ (as opposed to the future/past complete ray): we say that a 
curve $\sigma:(a,b)\to M$ is {\bf $\gamma$-adapted} to the line $\gamma$ provided \eqref{eq:wellsuited} holds and moreover we have the additional rigidity
\begin{equation}
\label{eq:bpbm}
\b^+(\sigma_t)=\b^-(\sigma_t)\qquad\forall t\in(a,b).
\end{equation}
When dealing with a complete timelike line $\gamma$ instead of a ray, we shall always refer to this stricter notion, so we hope no confusion can occur. 
Notice that, trivially from the above considerations, $\gamma$ itself is $\gamma$-adapted.

It is then clear that in the proximity of a $\gamma$-adapted segment   the function $\b^-$ is locally Lipschitz, subdifferentiable and $p$-subharmonic (meaning $-b^-$ is locally Lipschitz, superdifferentiable
and $p$-superharmonic, provided we disregard the fact that
increasing and decreasing functions cannot both belong to the same 
domain of ellipticity for the $p$-d'Alembert operator $\square_p$).

\bigskip

Our next goal is to prove that $\b^+=\b^-$ in the proximity of a $\gamma$-adapted segment: this will follow from a strong tangency principle related to the ellipticity of the $p$-d'Alembertian. Note from \eqref{Eq:Ops} that the additional weight $V$ only contributes (locally bounded) lower order terms to the  weighted $p$-d'Alembertian it induces, so the argument can be given in a similar vein as in the unweighted case \cite[Prop.\ 9]{BGMOS24+}.
\begin{proposition}[Strong tangency principle]
\label{Proposition: Strongtangencyprinciple}
Under the assumptions and notation of Theorem \ref{T2:weighted splitting}, let $\sigma$ be $\gamma$-adapted.

Then  there is a neighbourhood of the image of $\sigma$   on which  $\b^+ = \b^-$.
\end{proposition}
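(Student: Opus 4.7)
The plan is to exploit the strong maximum principle for linear uniformly elliptic equations with bounded measurable coefficients, applied to the difference $w := \b^+ - \b^-$ after linearizing the $p$-d'Alembertian about the unit covector tangent to $\sigma$. First I would collect regularity facts. Proposition \ref{P2:semiconcave} and Corollary \ref{L2:harmonicity} applied to the future-complete ray $\gamma\vert_{[0,\infty)}$ produce a neighbourhood $U^+$ of the image of $\sigma$ on which $\b^+$ is locally Lipschitz, superdifferentiable, satisfies $|d\b^+|=1$ a.e.\ and is weakly $p$-superharmonic. Applying their time-reversed analogues to $\gamma\vert_{(-\infty,0]}$ provides a neighbourhood $U^-$ on which $\b^-$ is locally Lipschitz, subdifferentiable, has $|d\b^-|=1$ a.e.\ and is weakly $p$-subharmonic. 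Setting $U := U^+ \cap U^-$, the reverse triangle inequality gives $w \geq 0$ on $U$, while \eqref{eq:bpbm} gives $w = 0$ along the image of $\sigma$.

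Next, for any nonnegative test function $\phi \in C^1_0(U)$, subtracting the two inequalities produced by Corollary \ref{L2:harmonicity} (and its past analogue) yields
\begin{equation*}
\int g^*\big(d\phi + \phi\, dV,\, \Phi(d\b^+) - \Phi(d\b^-)\big)\, d\vol^{(g,V)} \leq 0,
\end{equation*}
where $\Phi(\xi) := \xi/|\xi|_{g^*}^{2-p}$ is of class $C^1$ on the open cone of future-directed timelike covectors of $g^*$-norm near $1$. Writing
\begin{equation*}
\Phi(d\b^+) - \Phi(d\b^-) = A(x)\, dw(x), \qquad A(x) := \int_0^1 D\Phi\big(s\, d\b^+(x) + (1-s)\, d\b^-(x)\big)\, ds,
\end{equation*}
we recast this as a weak divergence-form inequality for $w$ with symmetric measurable coefficient matrix $A$ and bounded $dV$-drift. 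Because the future timelike cone in $T^*M$ is convex and the reverse Cauchy--Schwarz inequality forces $|s\, d\b^+ + (1-s)\, d\b^-|_{g^*} \geq 1$ a.e., the integrand defining $A(x)$ stays in the regime where $D\Phi$ is smooth.

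The decisive step is uniform ellipticity of $A$ on a possibly smaller neighbourhood $U' \subset U$. Along $\sigma$ both $d\b^+$ and $d\b^-$ must equal the unit future-directed covector dual to $\sigma'$: this follows from $\frac{d}{dt}(\b^\pm \circ \sigma_t) = 1$, the a.e.\ identities $|d\b^\pm|_{g^*}=1$, $|\sigma'|_g =1$, and the equality case in the reverse Cauchy--Schwarz inequality. The negative-homogeneity choice $p<1$ renders $D\Phi$ a Riemannian-positive bilinear form on the open cone of unit future-directed timelike covectors --- the same ellipticity observation that drives \cite{BGMOS24+}. Combining this with the equi-Lipschitz and equi-superdifferentiability properties of $\{\b_t^\pm\}_t$ from Proposition \ref{P2:semiconcave}, together with the $C^1$-compactness of geodesics approaching $\sigma$ provided by Proposition \ref{Prop: boundinitialtangents}, yields a neighbourhood $U'$ on which $A$ is uniformly elliptic with bounded measurable entries. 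The De Giorgi--Nash--Moser strong maximum principle for nonnegative weak supersolutions of linear uniformly elliptic equations in divergence form (e.g.\ Gilbarg--Trudinger) then forces $w \equiv 0$ on the connected component of $U'$ containing the image of $\sigma$, which is the desired conclusion.

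The main obstacle I anticipate is precisely the uniform ellipticity verification: the a.e.\ identities $|d\b^\pm|_{g^*} = 1$ do not directly pin down $d\b^\pm(x)$ pointwise, so the argument must leverage the rigidity along $\sigma$ and the equi-superdifferentiability to control the convex hull of $\{d\b^+(x), d\b^-(x)\}$ in the timelike cone. The template of \cite[Prop.\ 9]{BGMOS24+} should transfer to the present weighted, low-regularity setting, since $V \in C^1$ contributes only bounded zeroth-order terms that do not affect the applicability of the strong maximum principle.
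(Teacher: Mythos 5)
Your proposal is correct and follows essentially the same route as the paper's proof: collect the regularity and weak super-/sub-$p$-harmonicity of $\b^\pm$ from Proposition \ref{P2:semiconcave} and Corollary \ref{L2:harmonicity} (and their past analogues), linearize the $p$-d'Alembertian by integrating $D\Phi$ along the straight segment from $d\b^-$ to $d\b^+$ to obtain a divergence-form supersolution inequality for $w=\b^+-\b^-$ with measurable coefficients, verify uniform ellipticity near $\sigma$ from $p<1$ and the rigidity $d\b^\pm=(\sigma')^\flat$ along $\sigma$, and conclude with the strong maximum principle. The obstacle you flag — upgrading the a.e.\ identity $|d\b^\pm|=1$ to uniform ellipticity off $\sigma$ — is exactly the point the paper also handles by deferring to the discussion in \cite[Rem.~10]{BGMOS24+}, so your treatment matches theirs in both substance and level of detail.
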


\begin{proof}
We proceed along the lines of \cite[Prop.\ 9]{BGMOS24+}. By Proposition~\ref{P2:semiconcave}, Corollary \ref{L2:harmonicity} and their `past analogs' we know that there is a neighbourhood $W$ of the image of $\sigma$ where $\b^+,\b^-$ are both Lipschitz, with $|d \b^\pm|=1$ a.e., $\b^+$ is weakly $p$-superharmonic (i.e. \eqref{eq:bsuperp} holds) and $\b^-$ is weakly $p$-subharmonic
(meaning the opposite inequalities hold).

    Set $u:=\b^+ - \b^-$, and $\b(t):= \b^- + tu$ (so that $\b(0) = \b^-$ and $\b(1) = \b^+$) and let  $U \subset W$ be a coordinate chart that is diffeomorphic to $\Omega \subset \R^n$. Then  for all $0 \leq \phi \in C^1_0(U)$ we have
    \begin{align*}
        0 \leq& - \int_U d\vol^{(g,V)} \int_0^1 \frac{d}{dt} g^*(d\phi + \phi dV, |d\b|^{p-2} d\b) dt\\
        =& - \int_U d\vol^{(g,V)} \int_0^1 |d\b|^{p-2}g^*(d\phi + \phi dV, ((p-2) \frac{d\b \otimes \nabla \b}{|d\b|^2} + I)du) dt\\
        =& \int_\Omega dx\, e^{-V}\sqrt{|g|} \partial_i \phi \partial_j u \int_0^1 |d\b|^{p-2} \left((2-p) \frac{\partial^i \b \partial^j \b}{|d\b|^2} - g^{ij}\right) dt\\
        &+ \int_\Omega dx\, e^{-V} \phi \partial_i V \partial_j u \int_0^1 |d\b|^{p-2} \left((2-p) \frac{\partial^i \b \partial^j \b}{|d\b|^2} - g^{ij}\right)dt.
    \end{align*}
    Viewing the coefficients as frozen, $u$ is thus a supersolution $Lu \geq 0$ of a linear operator $L$ in divergence form, i.e.,
    \begin{align*}
        Lu = - \partial_i (a^{ij} \partial_j u) +  c^j \partial_j u,
    \end{align*}
    with $L^\infty_{loc}$-coefficients
    \begin{align*}
        a^{ij}(x) &:= e^{-V} \sqrt{|g|} \int_0^1 |d\b|^{p-2} \left((2-p) \frac{\partial^i \b \partial^j \b}{|d\b|^2} - g^{ij}\right)dt,\\
        c^j(x) &:= 
        e^{-V} \sqrt{|g|} \partial_i V \int_0^1 |d\b|^{p-2} \left((2-p) \frac{\partial^i \b \partial^j \b}{|d\b|^2} - g^{ij}\right)dt=a^{ij} \p_i V.    \end{align*}
    Note that the expression in the brackets of $a^{ij}$ is the matrix $\mathrm{diag}(1-p,1,\dots,1)$ at $\sigma(t_0)$ in any orthonormal coordinate system with $\partial_1|_{\sigma(t_0)} = \sigma'(t_0)$, which is positive definite precisely because $p < 1$. This, together with local Lipschitz continuity of $\b^\pm$  can now be used to derive ellipticity of $L$ (after shrinking $W$ if necessary), see the discussion in \cite[Rem.\ 10]{BGMOS24+}. At this point, since $u \geq 0$, $Lu=0$, and $u\circ \sigma = 0$, the strong tangency principle (see \cite[Thm.\ 8.19]{GilbargTrudinger98}) yields $u=0$, i.e., $\b^+ = \b^-$ on $W$. 
\end{proof}

To deduce higher regularity of $\b^\pm$, we need the following generalization of Evans \cite[Sec.\ 8.3.1, Thm.\ 1]{EvansPDE2ndEd}, whose proof is straightforward but included for convenience. In this lemma and its proof, we denote coordinate derivatives by subscripts having a further super- or subscript of their own.

\begin{lemma}[Using ellipticity to gain a second Sobolev derivative]\label{le:evans}
    Let $U, \tilde{U} \subset \R^n$ be open and bounded, with $\tilde{U}$ convex. Let $H:U \times \tilde{U} \to \R$ be $C^1$, as well as smooth in the second variable.  
     Denote $(x,v) = (x^1,\dots,x^n,v_1,\dots,v_n) \in U \times \tilde U$ and assume there exist $C \ge \theta >0$ such that all $(x,v,\xi)\in U \times \tilde{U} \times \R^n$  
     satisfy
    \begin{align*}
        |\xi|^2 \theta \leq \sum_{i,j} H_{v_i,v_j}(x,v) \xi_i \xi_j \leq C |\xi|^2.
    \end{align*}
    Suppose that $u \in C^1(U)$ is a weak solution of
    \begin{align*}
        \sum_{i=1}^n (H_{v_i}(x,du(x)))_{x^i} = 0,
    \end{align*}
    meaning all $\phi \in W^{1,2}_0(U)$ satisfy
\begin{equation}
\label{eq:weakform}
        \int_U \sum_{i=1}^n H_{v_i}(x, du(x)) \phi_{x^i} dx = 0.
\end{equation}
    Then $u \in W^{2,2}_{loc}(U)$.
\end{lemma}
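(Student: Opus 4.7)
The plan is to run the classical Nirenberg--Moser difference-quotient method for nonlinear divergence-form equations, in the spirit of Evans \cite[Sec.~8.3.1, Thm.~1]{EvansPDE2ndEd}. Write $D^h_k w(x):=h^{-1}(w(x+he_k)-w(x))$ for $h\neq 0$ small and a coordinate direction $e_k$. By the standard Sobolev difference-quotient characterization, it suffices to establish
\begin{align*}
\sup_{0<|h|<h_0}\int_V |D^h_k du|^2\,dx<\infty \qquad\text{for every }V\Subset U,
\end{align*}
since this gives $(du)_{x^k}\in L^2_{loc}(U)$, i.e.\ $u\in W^{2,2}_{loc}(U)$.

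To produce such a bound, first I would difference the equation: testing \eqref{eq:weakform} with the shifted test function $\phi(\cdot - he_k)$ (admissible provided $|h|<\mathrm{dist}(\mathrm{supp}\,\phi,\partial U)$), a change of variables and subtraction yield
$\int_U D^h_k[H_{v_i}(\cdot,du(\cdot))]\,\phi_{x^i}\,dx=0$
for every $\phi\in C^1_c(U)$. The fundamental theorem of calculus along the straight segment $t\mapsto(x+the_k,\,du(x)+t(du(x+he_k)-du(x)))$---here convexity of $\tilde U$ enters exactly once, to keep the $v$-argument inside the domain of $H$---rewrites this as the linearized identity
\begin{align*}
\int_U\bigl[A^h_{ij}(x)\,D^h_k u_{x^j}(x)+B^h_i(x)\bigr]\,\phi_{x^i}(x)\,dx=0,
\end{align*}
with
\begin{align*}
A^h_{ij}(x)&:=\int_0^1 H_{v_iv_j}\bigl(x+the_k,\,du(x)+t(du(x+he_k)-du(x))\bigr)\,dt,\\
B^h_i(x)&:=\int_0^1 H_{v_ix^k}\bigl(x+the_k,\,du(x)+t(du(x+he_k)-du(x))\bigr)\,dt.
\end{align*}
The ellipticity hypothesis makes $\theta|\xi|^2\le A^h_{ij}\xi_i\xi_j\le C|\xi|^2$ pointwise and uniformly in $h$, while continuity of the relevant derivatives of $H$ together with $u\in C^1(U)$ makes $B^h_i$ uniformly bounded on any compact subset of $U$.

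The remaining step is a Caccioppoli-type estimate. I would fix $V\Subset U$ and a cutoff $\eta\in C_c^\infty(U)$ with $\eta\equiv 1$ on $V$, and test with $\phi:=\eta^2\,D^h_k u\in C^1_c(U)$, which is admissible for all sufficiently small $h$. Expanding $\phi_{x^i}=2\eta\eta_{x^i}D^h_k u+\eta^2 D^h_k u_{x^i}$ and using the ellipticity lower bound
\begin{align*}
\int_U \eta^2 A^h_{ij}\,D^h_k u_{x^i}\,D^h_k u_{x^j}\,dx\ge \theta\int_U\eta^2|D^h_k du|^2\,dx
\end{align*}
to dominate the principal term, while Young's inequality absorbs the cross-terms $\eta\,\nabla\eta\cdot A^h D^h_k du\cdot D^h_k u$ and $\eta^2 B^h\cdot D^h_k du$ back into the left-hand side with a small constant, yields a Caccioppoli estimate
\begin{align*}
\tfrac{\theta}{2}\int_U\eta^2|D^h_k du|^2\,dx\le C(u,\eta,H)
\end{align*}
with $C$ independent of $h$ (using $u\in C^1$ and the uniform bounds on $A^h, B^h$), which is the required bound. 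The main obstacle is entirely organizational: sorting the four terms produced by the product rule on $\phi_{x^i}$ so that every factor of $\eta\,D^h_k du$ can be Young-absorbed without leaving an $h$-dependent remainder, and confirming that all other factors remain $L^\infty$-bounded on the relevant compact sets.
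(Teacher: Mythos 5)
Your proposal is correct and follows essentially the same route as the paper: both run Evans's difference-quotient argument, linearize $D^h_k[H_{v_i}(\cdot,du)]$ by the fundamental theorem of calculus along the straight segment in $U\times\tilde U$ (producing the same coefficients $A^h_{ij},B^h_i$ as the paper's $a^{ij,h},c^{i,h}$), and test with $\eta^2 D^h_k u$ to get an $h$-uniform Caccioppoli bound via ellipticity and Young's inequality. The only cosmetic difference is that you difference the equation first and then insert the test function, whereas the paper inserts $\phi=-\Delta^{-h}_k(\zeta^2\Delta^h_k u)$ and then performs the discrete integration by parts; these are the same computation.
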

\begin{proof}
    We follow the proof of \cite[Sec.\ 8.3.1, Thm.\ 1]{EvansPDE2ndEd}. Given $W \subset U$ open, we suppose it is precompact in $\tilde{W}$ which is open and precompact in $U$. We fix a smooth cutoff $\zeta$ which is $1$ on $W$, vanishes outside of $\tilde{W}$, and satisfies $0 \leq \zeta \leq 1$. Given $|h| > 0$ small, we choose as test function $\phi = -\Delta^{-h}_k(\zeta^2 \Delta^h_k u)$, where $\Delta^h_k u(x):=\tfrac{u(x+he_{(k)})-u(x)}h$ is the difference quotient of $u$ of parameter size $h$ in the $k$-th coordinate direction $e_{(k)}$. Inserting $\phi$ in \eqref{eq:weakform} and using `integration by parts' on the difference quotient yields
    \begin{align}\label{eq: differencequotform}
        \sum_{i=1}^n \int_U \Delta^h_k(H_{v_i}(x,du(x))(\zeta^2 \Delta^h_k u)_{x^i} = 0.
    \end{align}
    We calculate now as follows:
    \begin{align*}
        &\Delta^h_k H_{v_i}(x,du(x)) 
        \\&= \frac{H_{v_i}(x + he_{(k)}, du(x+he_{(k)}) - H_{v_i}(x, du(x))}{h} \\
        &= \frac{1}{h} \int_0^1 \frac{d}{ds} H_{v_i}(x+ s he_{(k)}, 
        (1-s) du(x)+ s(x + he_{(k)}) ) ds\\
        &= \frac{1}{h} \int_0^1 \sum_{j=1}^n H_{v_i,v_j} (u_{x^j}(x+he_{(k)}) - u_{x^j}(x))ds 
        +\int_0^1 \partial_{x^k} H_{v_i} ds\\
        &=: \sum_{j=1}^n a^{ij,h} \Delta^h_k u_{x^j}(x) + c^{i,h},
    \end{align*}
    where
    \begin{align*}
        a^{ij,h} := \int_0^1 H_{v_i,v_j}(x+s he_{(k)}, (1-s) du(x)+sdu(x+he_{(k)})  ) ds,\\
        c^{i,h} := \int_0^1 \partial_{x^k} H_{v_i}(x+she_{(k)}, (1-s) du(x)+sdu(x+he_{(k)}) ) ds.
    \end{align*}
    Substituting this into \eqref{eq: differencequotform} and using the product rule yields
    \begin{align*}
        0 = A_1 + A_2 + A_3 + A_4,
    \end{align*}
    where
    \begin{align*}
        A_1 &= \sum_{i,j} \int_U \zeta^2 a^{ij,h} (\Delta^h_k u_{x^j}) (\Delta^h_k u_{x^i}) dx,\\
        A_2 &= \sum_{i,j} \int_U a^{ij,h} (\Delta^h_k u_{x^j}) (\Delta^h_k u) 2 \zeta \zeta_{x^i} dx,\\
        A_3 &= \sum_i \int_U \zeta^2 c^{i,h} (\Delta^h_k u_{x^i}) dx,\\
        A_4 &= \sum_i \int_U c^{i,h} (\Delta^h_k u) 2\zeta \zeta_{x^i} dx.
    \end{align*}
    The terms $A_1$ and $A_2$ are estimated just as in \cite[Sec.\ 8.3.1, Thm.\ 1]{EvansPDE2ndEd}, using the uniform convexity assumptions on $H$. 
    Since $c^{i,h}$ is continuous and $u$ is $C^1$, we can estimate
    \begin{align*}
        |A_3| &\leq \sum_i \int_U \zeta^2 |c^{i,h}| |\Delta^h_k du| dx\\
        &\leq \overline{C} \int_U \zeta^2 |\Delta^h_k du| dx \\
        &\leq \overline{C} \bar{\varepsilon} \int_U \zeta^2 |\Delta_k^h du|^2 dx + \frac{\overline{C}}{\overline{\varepsilon}} \int_U \zeta^2 dx
        \\ \mbox{\rm 
        and} 
        \quad |A_4| & \le \overline{C} 
        \int_U \zeta^2 |\Delta_k^h u|^2 dx + 
        {\overline{C}}
        \int_U (\zeta_{x^i})^2 dx
            \end{align*}
similarly.    
If we choose $\bar{\varepsilon}$ sufficiently small (depending on $\overline{C}$ and $\theta$), then these estimates, along with the ones for $A_1$ and $A_2$, give an estimate on $\int_W |\Delta^h_k du|^2 dx$ in terms of $\int_W |du|^2 dx$ plus a constant. Since this estimate is independent of $h$, by letting $h\downarrow0$ we conclude that  $u \in W^{2,2}(W)$, as desired.
\end{proof}
The next corollary collects the regularity information we know so far on $\b^\pm$.

\begin{corollary}[$p$-harmonicity of the Busemann function]
\label{Corollary: W22loc}
Under the assumptions and notation of Theorem \ref{T2:weighted splitting}, let $\sigma$ be $\gamma$-adapted.
Then there is an open neighbourhood $W$ of the image of $\sigma$ such that $\b^+=\b^-$ in $W$, these functions are in  $C^1(W) \cap W^{2,2}_{loc}(W)$ and $p$-harmonic, i.e.\ equality holds in \eqref{eq:bsuperp} for all $\phi\in C^1_0(W)$.
\end{corollary}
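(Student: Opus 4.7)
The plan is to proceed in three linked steps, all hinged on the neighbourhood $W$ of the image of $\sigma$ supplied by Proposition \ref{Proposition: Strongtangencyprinciple}, where $\b^+=\b^-=:\b$. First, for the $C^1$ regularity I would combine Proposition \ref{P2:semiconcave} (local equi-Lipschitzness and uniform superdifferentiability of $\b^+$ on a neighbourhood of $\sigma$, extracted from the equi-properties of the approximate Busemanns $\b_t^+$) with its past analog, obtained by applying the same proposition to the future Busemann function $-\b^-$ of the time-reversed line $s\mapsto\gamma_{-s}$. Shrinking $W$ if necessary, both a uniform superdifferentiability estimate for $\b=\b^+$ and a uniform subdifferentiability estimate for $\b=\b^-$ hold on $W$; the standard argument that super- and sub-differentials agreeing at a point imply differentiability yields a unique $d\b(x)$ at each $x\in W$. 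Because the $o(|w|_{\tilde g})$-moduli are uniform in $x\in\overline U$ for any compact $\overline U\subset W$, passing to the limit along any sequence $x_n\to x$ in
\begin{align*}
|\b(\exp^{\tilde g}_{x_n}w)-\b(x_n)-(d\b(x_n),w)|=o(|w|_{\tilde g})
\end{align*}
identifies every subsequential limit of $d\b(x_n)$ with $d\b(x)$, giving $\b\in C^1(W)$.

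Second, for $p$-harmonicity Corollary \ref{L2:harmonicity} yields the weak $p$-superharmonic inequality \eqref{eq:bsuperp} for $\b^+$ on a neighbourhood of $\sigma$. Applying the same corollary to the time-reversed line $s\mapsto\gamma_{-s}$ (whose future Busemann is $-\b^-$, and for which $g$, $V$, and the Bakry--\'Emery condition are unchanged), and tracking the two minus signs in $|d(-\b^-)|^{p-2}d(-\b^-)=-|d\b^-|^{p-2}d\b^-$, yields the matching $p$-subharmonic inequality---the reverse of \eqref{eq:bsuperp}---for $\b^-$. After a further shrink of $W$ both inequalities hold for $\b$ on $W$ for every nonnegative $\phi\in C^1_0(W)$. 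As $\phi\mapsto\int g^*(d\phi+\phi\,dV,|d\b|^{p-2}d\b)\,d\vol^{(g,V)}$ is linear in $\phi$, adding a sufficiently large multiple of a fixed nonnegative cut-off in $C^1_0(W)$ extends the equality in \eqref{eq:bsuperp} from nonnegative to arbitrary $\phi\in C^1_0(W)$.

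Third, for the $W^{2,2}_{loc}$ regularity I would invoke Lemma \ref{le:evans}. In any coordinate chart $U\Subset W$, the substitution $\psi:=\phi e^V$ (a bijection of $C^1_0(U)$) recasts the equality in \eqref{eq:bsuperp} as the pure divergence-form identity $\int_U\psi_{x^i}\,H_{v_i}(x,d\b(x))\,dx=0$ for all $\psi\in C^1_0(U)$, with Lagrangian
\begin{align*}
H(x,v):=-\tfrac{1}{p}\bigl(g^{ij}(x)v_iv_j\bigr)^{p/2}\,e^{-2V(x)}\sqrt{|g(x)|}.
\end{align*}
A direct calculation yields
\begin{align*}
H_{v_iv_j}(x,v)=-|v|_g^{p-2}\,e^{-2V}\sqrt{|g|}\,\bigl[g^{ij}+(p-2)\,n^in^j\bigr],\qquad n^i:=\frac{g^{ij}v_j}{|v|_g},
\end{align*}
which in a $g$-orthonormal basis aligned with $v$ diagonalises to $|v|_g^{p-2}e^{-2V}\sqrt{|g|}\cdot\mathrm{diag}(1-p,1,\dots,1)$, uniformly positive definite since $p<1$. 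The $C^1$-continuity of $d\b$ and the pointwise identity $|d\b|_g=1$ (a.e.\ from Proposition \ref{P2:semiconcave}(iii), extended everywhere on $W$ by continuity of $d\b$) make $d\b(\overline U)$ a compact subset of the open timelike cone; after a further shrink of $U$, the ellipticity bounds of Lemma \ref{le:evans} hold uniformly on $\overline U\times\tilde U$ for some bounded convex open $\tilde U$ enclosing $d\b(\overline U)$ and avoiding $v=0$. Lemma \ref{le:evans} then gives $\b\in W^{2,2}(U)$, and covering yields $\b\in W^{2,2}_{loc}(W)$. The principal technical obstacle is the bookkeeping that reshapes the weighted equation \eqref{eq:bsuperp} into pure divergence form and the Lorentzian-signature-induced sign flip in $H$; both become routine once set up correctly.
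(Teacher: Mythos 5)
Your proposal is correct and follows essentially the same route as the paper's (very terse) proof: $\b^+=\b^-$ from the strong tangency principle, $C^1$ from matching super/subdifferentials, $p$-harmonicity from the two one-sided weak inequalities, and $W^{2,2}_{loc}$ from Lemma \ref{le:evans}. Your substitution $\psi=\phi e^V$ and the inclusion of the factor $e^{-2V}\sqrt{|g|}$ in $H$ just make explicit the bookkeeping the paper leaves implicit when it quotes Lemma \ref{le:evans} with $H(x,v)=-\tfrac1p(g^{ij}v_iv_j)^{p/2}$.
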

\begin{proof}
    We already know that on some neighbourhood $W$ of the image of $\sigma$ we have $\b^+=\b^-$. Then $p$-harmonicity follows from super/sub $p$-harmonicity of $\b^+$ and $\b^-$. Super/sub differentiability of these functions together with closure of super/sub differentials yield $C^1$ regularity. Finally, for $W^{2,2}_{loc}$ regularity we apply Lemma \ref{le:evans} above with $H(x,v) =- \frac1p (g^{ij}(x) v_i v_j)^{p/2}$ and $u = \b^\pm$.
\end{proof}

\section{The Busemann function has vanishing Hessian near \texorpdfstring{$\sigma$}{sigma}}
\label{Section: HigherregBochnerOhta}

Recall that on a smooth spacetime $(M,g)$, 
whose cotangent bundle is equipped with the Hamiltonian $H(v)=f(|v|)$ with $f\in C^\infty((0,\infty))$ for $v$ future timelike, we have the following version \cite[Lemma 12]{BGMOS24+} of the Bochner--Ohta identity \cite{Ohta14h} for functions $u \in C^3(\bar{M})$ with $|du| \geq \delta > 0$:
\begin{align}
    &\nabla \cdot [D^2 H|_{du} d(H|_{du})] - (DH)d[\nabla \cdot (DH|_{du})] \nonumber \\
    &= Tr[(D^2 H) (\nabla^2 u) (D^2 H) (\nabla^2 u)] + \Rc(DH,DH).
\end{align}
Although $H(v,x)$ is defined on the cotangent bundle $T^*M$, the operator $D$ denotes differentiation with respect to $v$ only, whereas $d$ (and $\nabla$) denote exterior (and covariant) differentiation 
with respect to $x \in M$.
In the specific case $H(v) = -|v|^p/p$ ($0 \neq p < 1$) of interest to us, this can be rewritten as
\begin{align}\nonumber
& g(|du|^{p-2} du , d (\square_p u)) 
-\nabla \cdot ((D^2 H) d (|du|^p/p)) 
\\ \label{Bochner3} 
&=\mathrm{Tr}\left[ \sqrt{D^2H} (\nabla^2 u) D^2 H (\nabla^2 u) \sqrt{D^2H}\right] + |du|^{2p-4} \Rc(du, du).
\end{align}
The incorporation of an additional weight $V\in C^\infty(M)$ in this formula is straightforward. Setting $\nabla^{(g,V)} \cdot Y := \nabla \cdot Y - Y(dV)$ and adding the expression
\begin{align*}
 X^i \nabla_i(X^j\nabla_j V) - (\nabla_j V)X^i\nabla_i X^j = X^i X^j \nabla_i \nabla_j V 
\end{align*}
for the Hessian of $V$ 
applied to the vector field $X:= DH$ yields
\begin{align*}
&g(\vert du\vert^{p-2}\,du, d(\square_p^{(g,V)} u)) - \nabla^{(g,V)}\cdot ((D^2H)d(\vert du\vert^p/p))\\
&= \mathrm{Tr}[(D^2 H) (\nabla^2 u) (D^2 H) (\nabla^2 u)] + \Rc(DH,DH) +  \Hess V(DH,DH).
\end{align*}
We note that the weighted Hilbert--Schmidt norm of the Hessian of $u$ is unaffected by the weight. When integrated against a test function $\phi \in C^1_0(U)$, we get
\begin{align}
\nonumber
\int_U \bigg(\mathrm{Tr}\left[ 
\left(\sqrt{D^2H} (\nabla^2 u) 
\sqrt{D^2H}\right)^2\right] 
+ \frac{(\Rc + \nabla^2 V)(du,du)}{|du|^{4-2p}}\bigg) \phi \,e^{-V} d\vol
\\ 
\label{weighted B-O} 
=\int_U [ (\square_p^{(g,V)} u)\nabla^{(g,V)} \cdot (\phi DH )
- D^2 H (d\phi,d(H)) ] e^{-V} \, d\vol. 
\end{align}
We can use this formula and the previous information gathered about Busemann functions to  obtain the following crucial local structural result:

\begin{proposition}[Local structure]\label{Proposition: higherregularity, weightconstant}
Adopt the assumptions and notation of Theorem \ref{T2:weighted splitting}.
If $\sigma$ is $\gamma$-adapted then on a neighbourhood of its image: $\b^\pm\in C^2$ and has zero Hessian, $|d\b^\pm|\equiv 1$ and $dV(\nabla\b^\pm)=0$.
\end{proposition}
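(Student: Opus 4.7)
The plan is to apply the weighted Bochner--Ohta identity \eqref{weighted B-O} to $u = \b := \b^+ = \b^-$ on the neighbourhood $W$ from Corollary~\ref{Corollary: W22loc}, where $\b \in C^1(W)\cap W^{2,2}_{loc}(W)$ is $p$-harmonic with $|d\b|=1$ a.e. For this choice of $u$, both terms on the right-hand side of \eqref{weighted B-O} will vanish: the first because $\square_p^{(g,V)}\b=0$ weakly, and the second because $|d\b|^p/p$ is a.e.\ constant, so its distributional differential is zero. Hence the left-hand side integrates to zero against every nonnegative test function $\phi\in C^1_0(W)$.

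Next I would use the algebraic identity $\Rc+\Hess V = \Rc^{(g,N,V)} + \frac{1}{N-n}\,dV\otimes dV$ to rewrite the integrand of the left-hand side as
\[
\mathrm{Tr}\bigl[\bigl(\sqrt{D^2H}\,\nabla^2\b\,\sqrt{D^2H}\bigr)^2\bigr] + \frac{(dV(\nabla\b))^2}{(N-n)\,|d\b|^{4-2p}} + \frac{\Rc^{(g,N,V)}(d\b,d\b)}{|d\b|^{4-2p}}.
\]
Each of the three summands contributes nonnegatively when integrated against $\phi\,e^{-V}$: the first as a pointwise Hilbert--Schmidt norm squared, the second manifestly, and the third because $\nabla\b$ is $g$-timelike (since $|d\b|=1$) and $\Rc^{(g,N,V)}\ge 0$ timelike distributionally by hypothesis. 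Since the total integrates to zero, each summand must vanish: the second forces $dV(\nabla\b)=0$ a.e.\ on $W$, and the first forces $\sqrt{D^2H}\,\nabla^2\b\,\sqrt{D^2H}=0$ a.e.; since $D^2H$ is nondegenerate for $p<1$, this yields $\nabla^2\b=0$ a.e.

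The delicate step is to justify \eqref{weighted B-O} for $\b\in C^1\cap W^{2,2}_{loc}$ on a merely $C^1$-spacetime, since \eqref{Bochner3} was derived pointwise for $C^3$ functions on $C^\infty$ backgrounds. I would proceed via the good approximation of Lemma~\ref{Le:Good approx}: smooth $g$ to $g_\eps$, $V$ to $V_\eps$ and mollify $\b$ chartwise to $\b_{\eps,\delta}\in C^\infty$; apply the smooth pointwise Bochner--Ohta identity to $\b_{\eps,\delta}$ with respect to $(g_\eps,V_\eps)$; integrate against $\phi\,e^{-V_\eps}\,d\vol_{g_\eps}$; and pass first $\delta\to 0$, then $\eps\to 0$. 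The $\square_p^{(g_\eps,V_\eps)}\b_{\eps,\delta}$ piece on the right-hand side passes to zero using weak $p$-harmonicity of $\b$ together with the $C^1_{loc}$-convergence of $(g_\eps,V_\eps)$; the $d(|d\b|^p/p)$ piece vanishes using $|d\b|=1$ a.e.\ and $W^{2,2}_{loc}$-convergence $\b_{\eps,\delta}\to\b$; on the left, lower semicontinuity of the weighted Hessian $L^2$-norm together with the uniform-on-compacta approximate timelike lower bound on $\Rc^{(g_\eps,N,V_\eps)}$ from Lemma~\ref{Le:Good approx} yields the required integrated inequality.

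Finally, from $\nabla^2\b=0$ a.e.\ and $\b\in C^1$, in any chart the coordinate relation $\partial_i\partial_j\b = \Gamma^k_{ij}\,\partial_k\b$ has continuous right-hand side, so $\b\in C^2(W)$ and $\nabla^2\b\equiv 0$ pointwise; consequently $\nabla\b$ is $g$-parallel, forcing $|d\b|\equiv 1$, while $dV(\nabla\b)\equiv 0$ extends from a.e.\ to everywhere by continuity. The principal obstacle is the regularization argument: one must simultaneously balance the weak $L^2$-convergence of $\nabla^2\b_{\eps,\delta}$, the $C^0_{loc}$-convergence of $g^{ij}_\eps$ and $\Gamma^k_{\eps,ij}$, the $\delta$-slack in Lemma~\ref{Le:Good approx}, and the loss of pointwise $p$-harmonicity under mollification, controlling all error terms so that the clean three-term decomposition above survives in the limit.
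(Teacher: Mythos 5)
Your proposal is correct and follows the same core strategy as the paper: pass the weighted Bochner--Ohta identity \eqref{weighted B-O} through the good approximation of Lemma~\ref{Le:Good approx} (with a mollification of $\b$ in $C^1\cap W^{2,2}_{loc}$), use lower semicontinuity of the Hessian term and the vanishing of the right-hand side (weak $p$-harmonicity plus $|d\b|\equiv 1$) to get $\Hess\b=0$ a.e., and then bootstrap to $\b\in C^2$ via the coordinate formula \eqref{Eq:HESS}. The one genuine difference is how $dV(\nabla\b)=0$ is extracted. You keep the term $\tfrac{1}{N-n}\bigl(dV_\eps(\nabla_\eps \b_{\eps,\delta})\bigr)^2$ explicitly through the limit, which works because it is a continuous function of $(x,d\b_{\eps,\delta})$ and so converges under the $C^1_{loc}$ convergences in play; this yields $\Hess\b=0$ and $dV(\nabla\b)=0$ in a single pass. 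The paper instead first proves $\Hess\b=0$ and $\b\in C^2$ using only the $N=\infty$ lower bound, and only afterwards --- once $d\b\in C^1$ --- pairs the distributional (Geroch--Traschen) tensor $\Rc+\Hess V$ with $d\b\otimes d\b$ to read off $0=(\Rc+\Hess V)(d\b,d\b)\ge\tfrac{1}{N-n}g^*(dV,d\b)^2$. Your route avoids that second, distributional step, which is a mild simplification. One caveat: your intermediate ``three nonnegative summands'' decomposition is only formal at the limit level, since $\Rc^{(g,N,V)}$ is an order-one tensor distribution and $d\b$ is at that stage merely continuous, so $\Rc^{(g,N,V)}(d\b,d\b)$ is not a priori defined and the timelike distributional nonnegativity (stated for $C^1$ fields) cannot be invoked directly; the decomposition must be performed, as you indeed indicate in your regularization paragraph, on the smooth approximants, where Lemma~\ref{Le:Good approx} gives the pointwise bound $\Rc^{(g_\eps,N,V_\eps)}(v,v)\ge-\delta\,g_\eps(v,v)$ and the Ricci term is discarded as an $O(\delta)$ error rather than retained as a nonnegative summand. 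You should also arrange, as the paper does, a uniform lower bound $|d\b_{\eps,\delta}|_\eps\ge 1-\eps$ so that $D^2H_\eps$ stays uniformly elliptic on the approximants.
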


\begin{proof}
We write $u:=\b^+$ in the following. Fix any relatively compact open coordinate neighbourhood $\tilde{U} \subset W$ of the neighbourhood $W$ from Corollary \ref{Corollary: W22loc}
and $\phi \in C^1_0(\tilde{U})$. Approximate $g$ by the good approximation $g_\varepsilon$ of Lemma \ref{Le:Good approx}, $u$ by smooth functions $u_\varepsilon$ in $(C^1 \cap W^{2,2})(\tilde{U})$ such that $|du_\varepsilon|_\varepsilon \geq 1 - \varepsilon$ (since $|du| = 1$) and $V$ by the smooth weights $V_\varepsilon:=V \star \rho_\varepsilon$. In particular, we also have that $\nabla_\varepsilon^2 u_\varepsilon \to \nabla^2 u$ in $L^2(\tilde{U})$ and, as a consequence, $\square_{p}^{(g_\varepsilon,V_\varepsilon)} u_\varepsilon \to \square_p^{(g,V)} u$ in $L^2(\tilde{U})$.

Note that the Bakry--\'Emery energy condition $\Rc^{(g,N,V)}(X,X) \geq 0$ for $C^1$ timelike vector fields $X$ yields the corresponding condition for $N= \infty$, and also yields an approximate energy condition for $g_\varepsilon$ by Lemma \ref{Le:Good approx}. Since we have fixed the coordinate neighbourhood 
$\tilde{U}$ and the (Euclidean) norms $|du_\varepsilon|_{e}$ 
are all uniformly bounded on $\tilde{U}$, for any $\delta > 0$ there exists $\varepsilon_0 > 0$ such that  $\Rc^{(g_\varepsilon, \infty, V_\varepsilon)}(du_\varepsilon,du_\varepsilon) \geq - \delta $ for all $0 < \varepsilon < \varepsilon_0$. Rewriting $\vol = \sqrt{|g|} \mathfrak{L}^n$, where $\mathfrak{L}^n$ is the Lebesgue measure on $\tilde{U}$ (here identified with its image in $\R^n$), as well as using that $g_\varepsilon \to g $ in $C^1(\tilde{U})$, we see that ultimately we may pass to the limit in the following weighted Bochner--Ohta identity for the approximate metrics:
\begin{align*}
&\int_{\tilde{U}} [\phi \mathrm{Tr}\left[ 
\left(\sqrt{D^2H_\varepsilon} (\nabla^2_\varepsilon u_\varepsilon) 
\sqrt{D^2H_\varepsilon}\right)^2\right] 
+ \phi |du_\varepsilon|^{2p-4} \Rc^{(g_\varepsilon,\infty,V_\varepsilon)}] 
d\vol^{(g_\varepsilon,V_\varepsilon)}
\\&=\int_{\tilde{U}} [ (\square_{p}^{(g_\varepsilon, V_\varepsilon)} u_\varepsilon)\nabla^{(g_{\varepsilon}, V_{\varepsilon})} \cdot (\phi DH_\varepsilon )
- D^2 H_\varepsilon (d\phi,d(H_\varepsilon(d u_\eps))) ] e^{-V_\varepsilon}
d\vol_{\varepsilon},
\end{align*}
where the Ricci curvature is evaluated in $(du_\eps,d u_\eps)$. Here, $H_\varepsilon(v):=-|v|_\varepsilon^p/p$ and its
derivatives $DH_\varepsilon$ and $D^2H_\varepsilon$ with respect to $v$ are understood to be evaluated at $v=du_\varepsilon$.
Due to the convergences discussed above, passing to the limit here and noting the fact that $u$ is weighted $p$-harmonic and $|du| = 1$ we conclude 
\begin{align}\label{pre Bochner}
&\int_{\tilde{U}} \phi \mathrm{Tr}\left[ 
\left(\sqrt{D^2H} (\nabla^2 u) 
\sqrt{D^2H}\right)^2\right] \le 0
\end{align}
for $\phi \ge 0$.  Now positive-definiteness of $D^2 H$ and arbitrariness of $\phi \ge 0$ imply 
$\Hess u = 0$ a.e.\ on $\tilde{U}$. Since the metric $g$ is $C^1$, we get from the coordinate representation of the Hessian (recall \eqref{Eq:HESS}) that $u \in C^2(\tilde{U})$, hence $C^2(U)$ by arbitrariness of $\tilde{U}$. As a consequence, $\Hess u = 0$ everywhere.

Moreover,  this improved regularity $du \in C^1$ allows us to upgrade the inequality \eqref{pre Bochner} obtained in the limit $\varepsilon \to 0$ above
to a nonsmooth
weighted Bochner--Ohta identity \eqref{weighted B-O} in which the Bakry--\'Emery Ricci tensor $\Rc^{(g,\infty,V)}$ is understood in the distributional sense of Geroch and Traschen~\cite{GerochTraschen87} and all the other integrals vanish.
Arbitrariness of $\phi$ then yields
\begin{align*}
    0 = (\Rc+ \Hess V)(du,du) \geq \frac{1}{N-n} g^*(dV,du)^2 \geq 0,
\end{align*}
where the second inequality holds since $(M,g)$ satisfies the Bakry--\'Emery energy condition of Definition \ref{D:BEC}.
Hence $dV(\nabla u) = 0$, as desired.
\end{proof}

\section{The spacetime splits}\label{Section: Proof}
In this section we globalize our local structural result from Proposition~\ref{Proposition: higherregularity, weightconstant}  to obtain the full splitting theorem.

\begin{theorem}[Product structure]\label{T:productstructure}
    Adopt the assumptions and notation from Theorem \ref{T2:weighted splitting}.
Then there is a $C^2$ complete Riemannian manifold $(S,h)$ of dimension $n-1$ and a $C^1$ isometric {embedding} ${\sf Fl}:\R\times S\to M$ such that all $y \in S$ satisfy:
\begin{itemize}
    \item[-] the (open) image of $\R\times S$ via ${\sf Fl}$ is contained in $I(\gamma)$ and on it
     $\b$ is $C^2$ with vanishing Hessian and $|d\b|\equiv 1$;
     \item[-] ${\sf Fl}_t(s,y)={\sf Fl}_{t+s}(0,y)$ for  
     all $t,s\in\R$;
    \item[-] $\R\ni t\mapsto{\sf Fl}_t(y)$ is $\gamma$-adapted with  ${\sf Fl}_0(y)=y$; 
    \item[-] ${\sf Fl}_t(z)=\gamma_t$ for every $t\in\R$ and some $z\in S$;
    \item[-] $(S,h)$ has Bakry-\'Emery tensor $\Rc^{(h,N-1,V)} \ge 0$ distributionally;
    \item[-] 
    the function $V({\sf Fl}_t(y))$ is independent of $t$.
\end{itemize}
Here $\R\times S$ is equipped with the Lorentzian tensor $\hat h:=dt^2-h$ and {`isometric embedding'} means ${\sf Fl}^*g=\hat h$.
 \end{theorem}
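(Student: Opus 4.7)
The natural starting point is to apply Proposition~\ref{Proposition: higherregularity, weightconstant} to the $\gamma$-adapted curve $\sigma=\gamma$ itself. This yields an open neighbourhood $U_0$ of the image of $\gamma$ on which $\b:=\b^+=\b^-$ is $C^2$, with $|d\b|\equiv 1$, $\Hess\b\equiv 0$, and $dV(\nabla\b)\equiv 0$. Because $\nabla\b$ is unit timelike and parallel, the level set $\{\b=0\}\cap U_0$ is a $C^2$-smooth spacelike hypersurface and inherits a $C^1$-Riemannian metric by restriction of $-g$. Vanishing Hessian forces the integral curves $t\mapsto{\sf Fl}_t(y)$ of $\nabla\b$ to be unit-speed maximizing $g$-geodesics along which $\b$ increases at unit rate; by \eqref{eq:wellsuited}--\eqref{eq:bpbm} they are $\gamma$-adapted, and $\gamma$ itself is the integral curve starting at $z:=\gamma_0$.

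I would then define ${\sf Fl}$ as the flow of $\nabla\b$ on its maximal open domain. Parallelism of $\nabla\b$ provides, wherever the flow is defined, both the semigroup identity ${\sf Fl}_{t+s}={\sf Fl}_t\circ{\sf Fl}_s$ and the pull-back identity ${\sf Fl}^*g=dt^2-h$ (since $\nabla\b$ is $g$-orthogonal to each level set and Killing), and constancy of $V$ along flow lines follows from $dV(\nabla\b)\equiv 0$. The Bakry--\'Emery Ricci splitting $\Rc^{(h,N-1,V)}\geq 0$ is a routine distributional Gauss-type computation in an orthonormal frame adapted to $\nabla\b$: the tangential components of $\Rc^g$ and of $\Hess V$ agree with those on a level set because $\Hess\b\equiv 0$, while the $dV\otimes dV$ contribution loses one dimension worth of trace since $dV\perp\nabla\b$, which is exactly why the synthetic dimension drops from $N$ to $N-1$.

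The heart of the proof --- and what I expect to be the main obstacle --- is extending ${\sf Fl}$ to all of $\R\times S$. I would argue by contradiction: suppose $t\mapsto{\sf Fl}_t(y)$ is a maximal integral curve only defined on $(a,b)\subset\R$ with $b<\infty$. It is a timelike $g$-unit-speed maximizer along which $\b$ increases linearly, hence $\gamma$-adapted. The $C^1$-compactness of Lemma~\ref{le:C1precomp}(iii), in tandem with Proposition~\ref{Prop: boundinitialtangents}, extracts a limit point $p\in M$ with a nonzero timelike tangent. Upper semicontinuity of $\b^+$, lower semicontinuity of $\b^-$, and their rigid agreement on the incoming curve combine with Proposition~\ref{Proposition: generalizedcoraysalongrayagreewithray} to show that the extended curve $\bar\sigma:(a,b+\varepsilon)\to M$ passing through $p$ is again $\gamma$-adapted. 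Invoking Proposition~\ref{Proposition: higherregularity, weightconstant} on $\bar\sigma$ places $p$ in a neighbourhood where $\b\in C^2$ with vanishing Hessian, so ODE continuation extends the integral curve of $\nabla\b$ strictly beyond $b$, contradicting the maximality of $(a,b)$; the backward-in-time analog rules out $a>-\infty$.

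Once the flow is defined on $\R\times S_0$ for some open connected $S_0\subset\{\b=0\}$ containing $z$, I would take $S$ to be the maximal such connected hypersurface through $z$ on which the construction works globally in $t$: the previous paragraph establishes openness of this condition, and closedness (hence completeness of $(S,h)$) follows because any $h$-Cauchy sequence in $S$ lifts, via ${\sf Fl}$, to a sequence in a compact slab $[-T,T]\times S$ whose $\tilde g$-limit in $M$ carries Busemann data forcing it to lie in $S$ by the same continuation argument. The remaining bullets --- the semigroup law, $\gamma$-adaptedness of each $t\mapsto{\sf Fl}_t(y)$, the isometric identity ${\sf Fl}^*g=dt^2-h$, the identification ${\sf Fl}_t(z)=\gamma_t$, the distributional lower Bakry--\'Emery Ricci bound on $(S,h)$, and the $t$-independence of $V\circ{\sf Fl}_t$ --- are then immediate from the construction and the pointwise identities $\Hess\b\equiv 0$, $|d\b|\equiv 1$, $dV(\nabla\b)\equiv 0$ inherited throughout the image.
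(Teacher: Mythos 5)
Your overall architecture coincides with the paper's: flow the $C^1$ vector field $\nabla\b$ off the level set $S=U\cap\{\b=0\}$, use Proposition~\ref{Proposition: higherregularity, weightconstant} for the local product structure, and run an open--closed (maximality/connectedness) argument to show the flow is complete in $t$ and $S$ is metrically complete; the identities ${\sf Fl}^*g=dt^2-h$, the semigroup law, $dV(\nabla\b)=0$, and the dimensional reduction to $\Rc^{(h,N-1,V)}\ge 0$ all come out as you describe.

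The genuine gap sits exactly where you locate the main obstacle: showing that the limit point $p$ of a maximal flow line (and likewise the $C^1_{loc}$-limit of a lateral sequence of flow lines) lies in the \emph{interior} of a $\gamma$-adapted curve, so that Proposition~\ref{Proposition: higherregularity, weightconstant} and ODE continuation apply. Your proposed mechanism --- upper semicontinuity of $\b^+$ and lower semicontinuity of $\b^-$ --- yields inequalities in the wrong direction: since $\b^+=\inf_t\b^+_t$ is USC and $\b^-=\sup_t\b^-_t$ is LSC, along ${\sf Fl}_{t_n}(y)\to p$ one only obtains $\b^-(p)\le b+\b(y)\le \b^+(p)$, which merely restates $\b^-\le\b^+$ and does not give the required equality $\b^+(p)=\b^-(p)=b+\b(y)$ (1-steepness also only produces the lower bound on $\b^+(p)$). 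Moreover the ``extended curve $\bar\sigma:(a,b+\varepsilon)\to M$ through $p$'' is asserted rather than constructed: Proposition~\ref{Proposition: generalizedcoraysalongrayagreewithray} only gives tangency of co-rays at \emph{interior} points of the flow line, and for a $C^1$ metric tangency of two geodesics does not force them to coincide, let alone continue past $p$. The paper closes this gap with two quantitative arguments absent from your sketch: for the vertical extension, it takes a co-ray $\eta$ from an interior point $x$ of the flow line, proves $\eta$ is $\gamma$-adapted via the limit of $\b^+_{t_n}(\eta_s)-\b^+_{t_n}(x)\le \ell(x,\eta^n_t)-\ell(\eta_s,\eta^n_t)$ along the approximating maximizers, and then uses inextendibility of co-rays (Lemma~\ref{le:corays}) plus uniqueness of integral curves of the $C^1$ field $\nabla\b$ to conclude that $\eta$ coincides with the flow line and continues strictly beyond $p$; for the lateral extension, it uses the already-established product structure to get $\ell({\sf Fl}_T(x_n),\gamma_t)\ge\sqrt{(t-T)^2-R^2}$ and hence $\b^+(\sigma_T)\le T$ at the limit line. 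Without one of these, the open--closed argument does not close. (A smaller omission: before extracting $p$ you must confine the flow line to a compact set; bounded proper time alone does not do this, and the paper derives the confinement from the causal structure of the product metric on the already-split region.)
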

 
\begin{proof}
{\sc Set up:}
Let $U\subset  I(\gamma)\subset M$ be the maximal connected open set containing $\gamma_0$ such that $\b^+=\b^-\in C^2(U)$ with zero Hessian and $\d V(\nabla\b^\pm)=0$: the fact that $\gamma$ is $\gamma$-adapted and Proposition \ref{Proposition: higherregularity, weightconstant} yield that one such open set exists; $U$ is therefore well defined,
since the union of all such open sets has the same properties. 
Set $\b:=\b^+=\b^-$ on $U$. Since $\nabla \b(\gamma_0)=\gamma_0'$, throughout $U$ we see that $\nabla \b$ is timelike  on $U$ with $|d \b|\equiv 1$.

The implicit function theorem ensures that $S:=U\cap\{\b=0\}$ is a $C^2$ hypersurface; equip it with the $C^1$ Riemannian metric tensor $h:=-g|_S$, call ${\sf d}$ the distance it induces, and set
\[
\bar B(r):=\{x\in S\ :\ {\sf d}(x,\gamma_0)\leq r\}
\]

For $x\in U$ let ${\Lambda}(x)>0$ be the supremum of the $t$'s such that there is a (necessarily unique) solution $(-t,t)\ni s\mapsto{\sf Fl}_s(x)\in U$ of the ordinary differential equation $\frac{d}{ds}{\sf Fl}_s(x)=\nabla \b({\sf Fl}_s(x))$ with initial condition ${\sf Fl}_0(x)=x$. 
Local solvability of this Cauchy problem implies lower semicontinuity of ${\Lambda}(\cdot)$.

Equip $\R\times S$ with the Lorentzian metric tensor $\hat h:= dt^2-h$ as in the statement and let $W\subset\R\times S$ be the set of $(t,x)$ such that $|t|<{\Lambda}(x)$. It is clear that $W$ is open and, since $\b$ has zero Hessian its (unit) gradient is a Killing vector field on $U$, so ${\sf Fl}:W\to U\subset I(\gamma)\subset M$ is an isometric embedding. The construction also ensures that all the claims  in the statement will be proved once we show that $W=\R\times S$ and that $S$ is complete.  The fact that $d V(\nabla\b)=0$ follows from the analogous claim in Proposition \ref{Proposition: higherregularity, weightconstant}.  It implies the Bakry-\'Emery lower bound $\Rc^{(h,N-1,V)} \ge 0$ on $S$ distributionally from the timelike distributional
lower bound $\Rc^{(g,N,V)} \ge 0$ on $M$,  the tensorization of Ricci curvature \cite[Corollary 7.43]{O'Neill83}, and the calculation $N-n = (N-1)-(n-1)$.

Let ${\RR}\subset [0,\infty)$ be the collection of radii $R$ for which the closed $\sf d$ ball $\bar B(R)\subset S$ centered at $\gamma_0$ is compact and $\Lambda(x)=+\infty$ for every $x\in \bar B(R)\subset S$. If we prove that ${\RR}=[0,\infty)$ we are done.  

Since $\gamma$ is $\gamma$-adapted, by Proposition \ref{Proposition: higherregularity, weightconstant} we know that  $0\in{\RR}$. We shall now show that   ${\RR}$  is both open and closed; this will suffice to conclude.

{\sc Claim 1: ${\RR}$ is open (via `vertical extension').} Let $R\in {\RR}$. By  the local compactness of $S$ and the compactness of $\bar B(R)\subset S$ we see that there is $R'>R$ such that $\bar B(R')\subset S\subset U$ is still compact. For such $R'$, by lower semicontinuity and positivity we have $\eps:=\inf_{\bar B(R')}{\Lambda}(\cdot)>0$, hence possibly picking a smaller $R'>R$ (a choice that increases $\eps$) we can assume that  $R'-R<\eps$. 

For $T\geq 0$ consider the compact subset $K_T$ of $\R\times S$ defined as
\[
K_T:=\{(t,x)\in\R\times S: x\in \bar B(R'),\ |t|+{\sf d}(x,\gamma_0)\leq T+R'\}
\]
and notice that the choice of $R'$ ensures that $K_0\subset W$. To prove our claim it suffices to check that $K_T\subset W$ for any $T\geq 0$.

Let  $\leq_W$ be the causal relation in $(W,\hat h)$ (this might be different from the restriction to $W$ of the causal relation in $\R\times S$ as we might have fewer causal curves), and  notice that if $K_T\subset W$ for some $T\geq 0$  then 
\[
(-T-R',\gamma_0)\leq_W(t,x)\leq_W(T+R',\gamma_0)\qquad \forall (t,x)\in K_T.
\]
Also, since  ${\sf Fl}:W\to M$ is an isometric embedding we have  that $(t,x)\leq_W(s,y)$ implies ${\sf Fl}_t(x)\leq{\sf Fl}_s(y)$. Thus $K_T\subset W$ implies  ${\sf Fl}(K_T)\subset D_T$,  the diamond $D_T:=J^+(\gamma_{-T-R'})\cap J^-(\gamma_{T+R'})$ being compact by global hyperbolicity.

The set $\mathcal T$ of times $T$ such that $K_T\subset W$ is (i) non-empty (since $0 \in\mathcal T$) and (ii) open (because $W$ is open and $K_T$ is compact); 
to conclude $\mathcal T =[0,\infty)$ it suffices to show that $\mathcal T$ is (iii) closed. We claim that if $K_{T_n}\subset W$ for $T_n\uparrow T$ then in fact $K_T\subset W$.

For $x\in\bar B(R')$ let $\hat T(x):=T+R'-{\sf d}(x,\gamma_0)$ and notice that the causal curve $(-\hat T(x),\hat T(x))\ni t\mapsto {\sf Fl}_t(x)$ is well defined, $\gamma$-adapted (obvious from the definition, i.e. \eqref{eq:wellsuited}, \eqref{eq:bpbm}) and  takes values in $D_{T}$. Hence it admits limits $x^+$ and $x^-$ as $t\uparrow \hat T(x)$ and $t\downarrow-\hat T(x)$, respectively. By the local structural result Proposition \ref{Proposition: higherregularity, weightconstant} and  the arbitrariness of $x\in \bar B(R')$, to conclude it suffices to show that $x^\pm$ are intermediate points of $\gamma$-adapted segments, as then it is clear that $(\pm\hat T(x),x)\in W$.

We show this for $x^+\in U$ by proving that there is $\gamma$-adapted curve $\eta:[0,{\lambda})\to M$ so that $x^+=\eta_t$ for some $t\in(0,{\lambda})$: by the local structural result Proposition \ref{Proposition: higherregularity, weightconstant} and the maximality of $U$, this will suffices to conclude. The argument for $x^-$ is analogous. Let  $\eta^n:[0,a_n]\to M$ be maximizing $g$-geodesics from $x$ to $\gamma_{t_n}$ for some $t_n\uparrow\infty$. Possibly up to subsequences and affine reparametrizations we can assume that $(\eta^n)'_0$ have a non-zero limit. Hence up to a further subsequence  we know that $(\eta^n)$ converges in $C^1_{loc}([0,{\lambda}),M)$ to some limit co-ray $\eta$ which is inextendible at $\lambda \in (0,\infty]$ by Lemma \ref{le:corays}. 
Since $(-\hat T(x),\hat T(x))\ni t\mapsto {\sf Fl}_t(x)$ is $\gamma$-adapted, by Proposition \ref{Proposition: generalizedcoraysalongrayagreewithray} we know that $\eta$ is timelike, hence up to a further affine reparametrization ({of the converging sequence and limit by proper time,} to be able to pass to the limit in the expressions below) we can assume  that $\ell(\eta_s,\eta_t)=t-s$ for any $s,t\in[0,{\lambda})$, $s<t$. 

 We now claim that $\eta$ is $\gamma$-adapted. Given what was just proved, the 1-steepness of $\b^\pm$ and the global inequality $\b^+\geq\b^-$, to check this it suffices to prove that $\b^+(\eta_s)-\b^+(x)\leq\ell(x,\eta_s)$ for any $s\in[0,{\lambda})$. Fix such $s$, pick $t\in(s,{\lambda})$ and notice that $\eta_s\ll\eta_t$ as $\eta$ is timelike. Hence for  $n$ sufficiently large we have $\eta_s\ll\eta^n_t$, thus $\ell(\eta_s,\gamma_{t_n})\geq \ell(\eta_s,\eta^n_t)+\ell(\eta^n_t,\gamma_{t_n})$ and therefore
 \[
\b^+_{t_n}(\eta_s)-\b^+_{t_n}(x)=\ell(x,\gamma_{t_n})-\ell(\eta_s,\gamma_{t_n}) \leq \ell(x,\eta^n_t)-\ell(\eta_s,\eta^n_t).
\]
Letting $n\to\infty$ we obtain $\b^+(\eta_s)-\b^+(x)\leq\ell(x,\eta_t)-\ell(\eta_s,\eta_t)=\ell(x,\eta_s)$, as desired.

We thus see that both $[0,\hat T(x))\ni t\mapsto{\sf Fl}_t(x)$ and $[0,{\lambda})\ni t\mapsto \eta_t$ are solutions of the Cauchy problem $\xi'_t=\nabla\b(\xi_t)$ with $\xi_0=0$ and since $\b\in C^2(U)$ this problem admits a unique maximal solution. It follows that $\eta_t={\sf Fl}_t(x)$ for any $t\in[0,\min\{\hat T(x),{\lambda}\})$ and since $\eta$ is inextendible at $\lambda$ 
while $[0,\hat T(x))\ni t\mapsto{\sf Fl}_t(x)$  is not (as $x^+$ exists), we see that  ${\lambda}>\hat T(x)$ and $\eta_{\hat T(x)}=x^+$, concluding Claim 1.

{\sc Claim 2: ${\RR}$ is closed (via `lateral extension').} 

Let $(R_n)\subset{\RR}$ satisfy $R_n\uparrow R$. Our goal is to prove that $R\in{\RR}$. 

By definition of geodesic distance in $S$ it is clear that any point in $\bar B(R)$ is limit of points in $B(R):=\cup_N\bar B(R_n)$, hence to prove that $\bar B(R)$ is compact it suffices to prove that any ${\sf d}$-Cauchy sequence $(x_n)$ in $B(R)$ has a limit in $\bar B(R)$.

The assumption $(x_n)\subset B(R)\subset U$ together with the previous step yield the existence of $\gamma$-adapted lines $\sigma^n:\R\to M$ with $\sigma^n_0=x_n$ (via the explicit formula $\sigma^n_t:={\sf Fl}_t(x_n)$). 

Let $T>0$ and notice that   the same arguments used in the previous step ensure that $\{\sigma^n_t:n\in\mathbf N,\ |t|\leq T\}$ is contained in the (compact) diamond $ J^-(\gamma_{R+T})\cap J^+(\gamma_{-R-T})$.  From $\ell(\sigma^n_{-T},\sigma^n_T)=2T$
it follows that the closure $\tilde K$ of
$\{(\sigma^n_{- T},\sigma^n_T)\}_{n\in\mathbf N}$ is a compact 
subset of $\{\ell>0\}$. 
Thus an application of Lemma \ref{le:C1precomp}(iii)
and a diagonalization argument show that --- after passing to a non-relabeled subsequence --- the lines $\sigma^n$ converge in $C^1_{loc}(\R,M)$ to a  complete timelike limit line $\sigma$. If we prove that $\sigma$ is $\gamma$-adapted we are done: the local structural result Proposition \ref{Proposition: higherregularity, weightconstant} then implies that $\sigma_0$ belongs to $S$  and is the ${\sf d}$-limit of the $x_n$'s.

The continuity of $\ell$ on $\{\ell\geq 0\}$ and the analogous property of the $\sigma^n$'s give that $\ell(\sigma_s,\sigma_t)=t-s$ for any $s,t\in\R$, $s<t$, thus recalling that $\b^+\geq \b^-$ on $I(\gamma)$, to conclude it suffices to prove that $\b^+(\sigma_T)\leq T\leq \b^-(\sigma_T)$ for every $T\in \R$. Fix $T$, recall that for $t\geq R+T$ we have $\sigma_T,\sigma^n_T\leq \gamma_t$, $n\in\mathbf N$, and that $\sigma^n_T\to \sigma_T$ in $M$. Thus the continuity of $\ell$ in $\{\ell\geq 0\}$ yields
\[
\b_t(\sigma_T)=t-\ell(\sigma_T,\gamma_t)=\lim_{n\to\infty}t-\ell(\sigma^n_T,\gamma_t).
\]
The identities $\sigma^n_T={\sf Fl}_T(x_n)$ and $\gamma_t={\sf Fl}_t(\gamma_0)$ together with the product structure established in the previous step give the bound $\ell(\sigma^n_T,\gamma_t)\geq \sqrt{(t-T)^2-{\sf d}^2(x_n,\gamma_0)}\geq  \sqrt{(t-T)^2-R^2}$, hence
\[
\b^+(\sigma_T)=\lim_{t\uparrow+\infty}\b_t(\sigma_T)\leq \lim_{t\uparrow+\infty}t-\sqrt{(t-T)^2-R^2}=T.
\]
An analogous argument holds for $\b^-$, hence the proof is complete.
\end{proof}

It is in principle not yet clear that the image of ${\sf Fl}$ in the last statement is the whole manifold $M$. {Inspired by 
Eschenburg's \cite[Proof of Lem. 7.3]{Eschenburg88}, we prove this using the following general lemma.}

\begin{lemma}[Reaching the boundary of any open set with a geodesic]
\label{le:reaching}
    Let $M$ be a connected, smooth, second countable manifold
    equipped with a  $C^1$ semi-Riemannian metric tensor $g$  and $U\subsetneq M$ open and nonempty.
    Then there is a $g$-geodesic $\sigma:[0,1]\to M$ such that $\sigma(s)\in U$ for every $s\in[0,1)$ and $\sigma(1)\notin U$. 
\end{lemma}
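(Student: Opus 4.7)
Since $U\subsetneq M$ is open and $M$ is connected, $\partial U = \overline{U}\setminus U$ is nonempty; fix $q\in\partial U$. The plan is to produce a $g$-geodesic joining two points of $M$ close to $q$, one in $U$ and one outside $U$, and then restrict it to its first-exit time from $U$.

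The technical heart is a local surjectivity property for the $g$-exponential map in $C^1$-regularity: there exist a neighbourhood $V$ of $q$ and $\delta>0$ such that for every $y_1\in V$ and every $y_2\in M$ with $\tilde{\sf d}(y_1,y_2)<\delta$, one can find a $g$-geodesic $\sigma:[0,1]\to M$ with $\sigma(0)=y_1$ and $\sigma(1)=y_2$. Working in a coordinate chart around $q$, any $g$-geodesic $\sigma_v$ starting at $y_1$ with initial velocity $v$ (whose existence is ensured by Peano, cf.\ Lemma~\ref{le:C1precomp}(i)) admits the coordinate expansion $\sigma_v(1)=y_1+v+O(|v|^2)$, with error uniform as $y_1$ varies in a compact neighbourhood of $q$ by continuity of the Christoffel symbols. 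Although $\sigma_v$ need not be unique (since $\Gamma\in C^0$ only), one can either invoke a topological-degree argument applied to the upper semicontinuous compact-valued map $v\mapsto\{\sigma_v(1)\}$ (using Lemma~\ref{le:C1precomp}(ii)) or approximate $g$ by smooth metrics $g_\eps$, apply the smooth inverse function theorem to obtain $v_\eps$ with $\sigma^\eps_{v_\eps}(1)=y_2$, and pass to a $C^1$-limit to obtain the desired $g$-geodesic.

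Because $q\in\partial U$, every neighbourhood of $q$ meets both $U$ and $M\setminus U$; hence we may pick $y_1\in V\cap U$ and $y_2\in V\cap(M\setminus U)$ with $\tilde{\sf d}(y_1,y_2)<\delta$. The geodesic $\sigma:[0,1]\to M$ furnished by the previous step satisfies $\sigma(0)=y_1\in U$ and $\sigma(1)=y_2\notin U$. Setting $\tau:=\sup\{t\in[0,1]:\sigma([0,t])\subset U\}$, openness of $U$ gives $\tau>0$ while $\sigma(1)\notin U$ gives $\tau<1$. Continuity of $\sigma$ together with closedness of $M\setminus U$ yields $\sigma(\tau)\in\overline{U}\cap(M\setminus U)=\partial U$. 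The affine reparametrization $\tilde\sigma(s):=\sigma(\tau s)$ is still a $g$-geodesic (affine rescalings preserve the geodesic equation), with $\tilde\sigma([0,1))\subset U$ and $\tilde\sigma(1)\notin U$, as required. I expect the main obstacle to be the local surjectivity step: in $C^1$-regularity the exponential map is not a diffeomorphism because geodesics need not be unique, so the standard inverse function theorem proof must be replaced by a degree or approximation argument as outlined. The remaining reductions --- choosing $y_1$, $y_2$ near $q$, the first-exit-time argument, and the affine reparametrization --- are routine once this step is in hand.
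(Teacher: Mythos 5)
Your proposal is correct in substance but follows a genuinely different route from the paper. The paper avoids the two-point boundary value problem entirely: it fixes a point $y\in U$ near $\partial U$, takes the $\tilde{\sf d}$-nearest point $x\in M\setminus U$ to $y$ with respect to the auxiliary smooth complete Riemannian metric $\tilde g$, and observes that the $\tilde g$-geodesic from $x$ to $y$ meets $\partial B^{\tilde g}_r(y)$ transversally at $x$; hence \emph{any} $C^1$ curve leaving $x$ with that initial velocity --- in particular a $g$-geodesic supplied by Peano's theorem, i.e.\ Lemma~\ref{le:C1precomp}(i) --- immediately enters $B^{\tilde g}_r(y)\subset U$, and one concludes by reversing time and rescaling. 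This needs only forward existence from initial data and sidesteps the non-uniqueness of $C^1$-geodesics altogether. Your argument instead establishes local surjectivity of the $g$-exponential map, which is the harder step precisely because the inverse function theorem is unavailable; your fallback of smoothing $g$, solving the BVP for $g_\eps$ (by the expansion $\sigma_v(1)=y_1+v+O(|v|^2)$ with constants uniform in $\eps$ and $y_1$, plus a degree argument), and extracting a $C^1$-limit geodesic via the compactness of Lemma~\ref{le:C1precomp}(ii) does work, and yields the stronger intermediate fact of local geodesic connectedness --- at the price of degree theory (note that the solution funnel $v\mapsto\{\sigma_v(1)\}$ is compact and connected but not convex, so the set-valued degree route needs the $R_\delta$-structure of Peano solution sets; the smoothing route is cleaner). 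Two harmless slips: $\sigma(1)\notin U$ only gives $\tau\le 1$, not $\tau<1$ (if $\tau=1$ the unreparametrized curve already works), and the exit-time argument should first note $\sigma(\tau)\notin U$ by maximality of $\tau$ before concluding $\sigma(\tau)\in\partial U$.
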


\begin{proof}
Let $\tilde g$ be the auxiliary smooth complete Riemannian tensor on $M$ provided using second countability by Nomizu and Ozeki \cite{NomizuOzeki61}.
Pick $z\in \partial U$ and let $R>0$ be such that for any $y\in B^{\tilde g}_R(z)$ the $\tilde g$-exponential map is a smooth diffeomorphism from  $B_R^{\tilde g}(0)\subset T_yM$ to its image in $M$. Pick $y\in B^{{\tilde g}}_R(z)\cap U$, let $r:=\min_{x\in M\setminus U}{\sf d}_{\tilde g}(y,x)>0$ and pick $x$ achieving the minimum. Then the (only) $\tilde g$-geodesic from $x$ to $y$ meets the smooth boundary of $B^{{\tilde g}}_r(y)$ transversally. Let $v\in T_{x}M$ be the speed at 0 of such geodesic. Transversality implies that if  $\eta:[0,1]\to M$ is a  $C^1$ curve with $\eta(0)=x$ and $\eta'(0)=v$, then for some $\varepsilon>0$ we have $\eta((0,\varepsilon))\subset B^{{\tilde g}}_r(y)\subset U$. In particular, this applies to the solution of the $g$-geodesic equation with such initial data provided by Lemma \ref{le:C1precomp}: reversing time and rescaling we get the desired curve.
\end{proof}

We can now conclude:
\begin{proof}[Proof of Theorem \ref{T2:weighted splitting}]
With the notation of Theorem \ref{T:productstructure}, let $U\subset M$ be the open image of $\R\times S$ in $M$ via ${\sf Fl}$. We want to prove that  $U=M$. If not, we  use Lemma \ref{le:reaching} above to find a $g$-geodesic  $\sigma:[0,1]\to M$ with $\sigma_t\in U$ for $t<1$ and $\sigma_1\notin U$. For $t<1$ put $(\sigma^\R_t,\sigma^S_t):={\sf Fl}^{-1}(\sigma_t)\in \R\times S$ and notice that since ${\sf Fl}:\R\times S\to U\subset M$ is an isometric embedding, the  curves $\sigma^\R,\sigma^S$ are  geodesics  on $[0,1)$ in $\R,S$ respectively. Since $\R$ and $S$ are complete, these geodesics have limits $\sigma^\R_1,\sigma^S_1$ and by the continuity of ${\sf Fl}$ we have $U\ni {\sf Fl}_{\sigma^\R_1}(\sigma^S_1)=\sigma_1$, contradicting the assumption $\sigma_1\notin U$.

It remains to discuss the regularity of ${\sf Fl}$. The construction and $\b\in C^2(M)$ yield $\sf Fl \in C^1$.
The improvement to ${\sf Fl} \in C^2$ follows from  Proposition \ref{Prop: semiRiemannisometriesregularity}.
Alternately,  it follows from the Riemannian isometry results of Calabi and Hartman \cite{CalabiHartman1970} or Taylor \cite{taylor2006existence} as in Remark \ref{R:weighted splitting}.
\end{proof}

\section{Outlook}\label{Section: Conclusion}

In the research of low regularity spacetimes, 
it is natural to ask whether such splitting theorems remain true 
for (continuous) Geroch--Traschen metrics \cite{GerochTraschen87} (see also \cite{steinbauer2009geroch}), which are the lowest regularity class of metrics for which curvature tensors can be stably defined distributionally. However, as far as splitting theorems are concerned, that regularity seems to be outside of the scope of current methods. More reasonably, one could investigate locally Lipschitz continuous metrics, for which promising approximation results were recently developed \cite{calisti2025hawking}.

In analogy with splitting theorems for low regularity metrics, there is the problem of 
establishing a splitting theorem for infinitesimally Minkowskian metric measure spacetimes satisfying timelike curvature-dimension conditions first mentioned with the octet \cite{BeranOctet24+} and more explicitly stated as an open problem in Braun's survey \cite{braun2025new}. Such a result in the metric spacetime setting can be expected to give rise to a structure theory for spacetimes with lower timelike Ricci curvature bounds (and upper dimension bounds), as 
{Gigli's nonsmooth extension~\cite{Gigli13+, Gigli14} of the Cheeger-Gromoll splitting theorem \cite{CheegerGromoll71} to $\mathsf{RCD}(0,N)$ spaces 
did} in Riemannian signature \cite{MondinoNaber19}.

\appendix

\section{Regularity of semi-Riemannian isometries}

In this appendix we prove the following proposition concerning the regularity of isometries between semi-Riemannian manifolds equipped with $C^1$ metric tensors.
In a corollary we combine it with a result of Hartman~\cite{Hartman58L} to yield the corresponding improvement when $g \in C^k$ for $1<k \in \mathbf N$. 
When the metric has Riemannian signature the same results were among those obtained by Calabi--Hartman~\cite{CalabiHartman1970}, and extended to metrics of $C^{k,\alpha}$ regularity
for $k \in \mathbf N$ and $0<\alpha<1$ by Taylor \cite{taylor2006existence} using different methods.

\begin{proposition}[Regularity of semi-Riemannian isometries]\label{Prop: semiRiemannisometriesregularity}
Let $(M,g)$ and $(\bar{M},\bar{g})$ be semi-Riemannian manifolds (of equal signature and dimension) such that $g$ and $\bar{g}$ are of regularity $C^1$. Suppose $\Phi:M \to \bar{M}$ is a $C^1$-diffeomorphism satisfying $\Phi^*\bar{g} = g$. Then $\Phi$ is of regularity $C^{2}$.
\end{proposition}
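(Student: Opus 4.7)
My plan is the classical two-step strategy: first show that $\Phi$ carries $g$-geodesics to $\bar g$-geodesics, then exploit the geodesic equation in local coordinates to bootstrap $\Phi$ from $C^1$ to $C^2$.

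For the first step, I would argue variationally. Since $\Phi^*\bar g = g$ and $\Phi$ is a $C^1$-diffeomorphism, the energy functional is preserved: every $C^1$ curve $\sigma$ in $M$ satisfies $E_M(\sigma) = E_{\bar M}(\Phi \circ \sigma)$, and compactly supported variations of $\sigma$ correspond bijectively, via $\Phi$, to those of $\Phi\circ\sigma$. Hence $\sigma$ is a critical point of $E_M$ iff $\Phi\circ\sigma$ is a critical point of $E_{\bar M}$. Critical points of $E$ satisfy the geodesic equation in the distributional sense, and by \cite{LangeLytchakSaemann} (already invoked elsewhere in this paper for $C^1$-metrics) such weak solutions of \eqref{geodesic eqn} are automatically $C^2$.

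For the second step, fix $p\in M$ and coordinate charts $(x^i)$ around $p$ and $(y^k)$ around $\Phi(p)$. For each $v\in T_pM$, Lemma~\ref{le:C1precomp}(i) provides a $g$-geodesic $\sigma_v$ with $\sigma_v(0)=p$ and $\dot\sigma_v(0)=v$; by Step~1, $\bar\sigma_v := \Phi\circ\sigma_v$ is then a $C^2$ $\bar g$-geodesic with $\dot{\bar\sigma}_v(0) = d\Phi_p(v)$. The two geodesic equations at $t=0$ give
\begin{align*}
\ddot\sigma_v^i(0) &= -\Gamma^i_{mn}(p)\,v^m v^n, \\
\ddot{\bar\sigma}_v^k(0) &= -\bar\Gamma^k_{ij}(\Phi(p))\,(d\Phi_p v)^i(d\Phi_p v)^j,
\end{align*}
while differentiating $\dot{\bar\sigma}_v^k(t) = D_i\Phi^k(\sigma_v(t))\,\dot\sigma_v^i(t)$ at $t=0$ yields, at least formally, $\ddot{\bar\sigma}_v^k(0) = \partial_m D_i\Phi^k(p)\,v^m v^i + D_i\Phi^k(p)\,\ddot\sigma_v^i(0)$. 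Combining these gives the candidate
$$Q^k(p;v) := -\bar\Gamma^k_{ij}(\Phi(p))\,(d\Phi_p v)^i(d\Phi_p v)^j + D_i\Phi^k(p)\,\Gamma^i_{mn}(p)\,v^m v^n,$$
a quadratic form in $v$ depending continuously on $p$. Polarizing yields a continuous symmetric bilinear form $B(p;\cdot,\cdot)$; the target is $B(p;u,v) = \partial_u\partial_v\Phi(p)$ in the classical sense, which directly produces $\Phi\in C^2$.

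The main obstacle is this last implication. The identity above is only formal as written, since $\partial_m D_i\Phi^k$ is not yet known to exist. To make it rigorous, I would show that for each fixed $v$ the difference quotient $h^{-1}\bigl(D_i\Phi^k(\sigma_v(h))\dot\sigma_v^i(h) - D_i\Phi^k(p)v^i\bigr)$ converges to $Q^k(p;v)$ as $h\to 0$, uniformly in $v$ on compact sets and in $p$ on compact sets. Lemma~\ref{le:C1precomp}(ii) (uniform $C^1$ control on the geodesic family through $K\subset TM$) together with polarization should then upgrade the pointwise Peano second derivative to genuine $C^2$ regularity; this is essentially the argument of Hartman~\cite{Hartman58L}. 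A wrinkle specific to the $C^1$-setting is that geodesics may branch, so care is needed in selecting $\sigma_v$; a clean workaround is to let $\bar\sigma_v$ be \emph{any} $\bar g$-geodesic with initial data $(\Phi(p),d\Phi_p v)$ and then define $\sigma_v := \Phi^{-1}\circ\bar\sigma_v$, which is automatically a $g$-geodesic by Step~1 and depends continuously on $v$ through Lemma~\ref{le:C1precomp}(ii) applied in $\bar M$.
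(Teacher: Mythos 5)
Your Step 1 is sound and reaches the same intermediate conclusion as the paper's proof (isometries carry $g$-geodesics to $\bar g$-geodesics), via a variational argument where the paper instead observes that $\Phi\circ y$ is a weak, hence classical, solution of the $\bar g$-geodesic equation; either route works, and the appeal to \cite{LangeLytchakSaemann} is not even needed, since continuity of the right-hand side of the Euler--Lagrange equation bootstraps any distributional solution to a classical $C^2$ one. Step 2, however, departs from the paper and has a genuine gap at precisely the point you flag as the main obstacle. First, a slip: the difference quotient $h^{-1}\bigl(D_i\Phi^k(\sigma_v(h))\dot\sigma_v^i(h)-D_i\Phi^k(p)v^i\bigr)$ equals $h^{-1}\bigl(\dot{\bar\sigma}_v^k(h)-\dot{\bar\sigma}_v^k(0)\bigr)$ and converges to $\ddot{\bar\sigma}_v^k(0)=-\bar\Gamma^k_{ij}(\Phi(p))(d\Phi_pv)^i(d\Phi_pv)^j$, not to $Q^k(p;v)$; the quantity converging to $Q^k(p;v)$ is $h^{-1}\bigl[D_i\Phi^k(\sigma_v(h))-D_i\Phi^k(p)\bigr]v^i$, after correcting for $\dot\sigma_v^i(h)\neq v^i$. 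More seriously, even the corrected statement only controls the matrix increment $D\Phi(\sigma_v(h))-D\Phi(p)$ applied to the single vector $v$ that is also the direction of travel --- purely \emph{diagonal} second-order information. Uniformity of such diagonal difference quotients in $(p,v)$ does not yield differentiability of the matrix-valued map $D\Phi$: you never control $[D\Phi(p+w)-D\Phi(p)]u$ for $u$ transversal to $w$, and polarization can only be performed once you already know the limit is a bilinear form in both slots, which presupposes the differentiability you are trying to establish. A further difficulty is that $\sigma_v(h)=p+hv+O(h^2)$ while $D\Phi$ is merely continuous, so estimates along the geodesic cannot be transferred to the straight segment with an $o(h)$ error.

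The standard repair --- and what Hartman's argument actually amounts to --- is to work one level down: the two geodesic equations give a \emph{second-order Taylor expansion of $\Phi$ itself}, namely $\Phi(p+hv)=\Phi(p)+hD\Phi(p)v+\tfrac{h^2}{2}Q^k(p;v)+o(h^2)$ uniformly in $p$ on compacta and $|v|\le 1$ (here the $O(h^2)$ gap between $\sigma_v(h)$ and $p+hv$ is harmless because $\Phi$, unlike $D\Phi$, is $C^1$), followed by a converse Taylor theorem with uniform remainder to conclude $\Phi\in C^2$ with $D^2\Phi$ the polarization of $Q$. That route is completable, but the converse-Taylor lemma is the heart of the matter and your sketch does not supply it. The paper sidesteps all of this by choosing coordinates that are infinitesimally normal at the base point, so the target second derivative is \emph{zero} there; the geodesic estimates then show $D\Phi$ has Lipschitz constant tending to zero on shrinking balls, which delivers both existence and continuity of $D^2\Phi$ in one stroke. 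Two minor further points: Lemma~\ref{le:C1precomp}(ii) gives compactness of the family of geodesics, not continuous dependence on $v$ (which can fail for $C^1$ metrics because of branching); compactness is, however, all you need for the uniform remainders, so your selection $\sigma_v:=\Phi^{-1}\circ\bar\sigma_v$ should be justified on those grounds rather than by continuity in $v$.
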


\begin{proof}
At any fixed point $x_0 \in M_1$ a smooth change of variables allows us to find a smooth coordinate system
in which $x_0=0$ and the Christoffel symbols \eqref{Christoffel} vanish at $x_0$;
we may also assume $g_{ij}(x_0)$ has unit magnitude whenever $i=j$ and vanishes for all $i\ne j$.
We call such coordinates {\em infinitesimally normal} at $x_0$;
(they differ from normal coordinates in that we do not assert the geodesy of any segment through the origin). 
The same is true of $\bar g$ at the point $\bar x_0=\Phi(x_0)$.  We henceforth express both metrics in such coordinates and regard $\Phi$ as a map from $\R^n$ to $\R^n$ which acts as a $C^1$ diffeomorphism in a neighbourhood of $0=\Phi(0)$, whose derivative is denoted by $\p\Phi(x)$.  For any $y,v \in B^{e}_r(0)$ in a small enough coordinate ball,  there exists a (possibly nonunique)
$g$-geodesic $\{y_t\}_{t\in[0,1]}\subset\R^n$
with $y_0=x$ and $\dot y_0=v$. 
Since $\Gamma_{jk}^i(y_t) = o(1)$ as $r \to 0$, for the Euclidean distance $|\cdot |_{e}$ on our infinitesimally normal coordinates, the geodesic equation yields
$$|\dot y_1-\dot y_0|_{e} \le c |v|_{e}^2.$$
It is easy to see that $\Phi$ must map $g$-geodesics to $\bar{g}$-geodesics: Indeed, if $y_t$ is a $g$-geodesic, then $\Phi(y_t)$ is $C^1$ and a weak solution of the $\bar{g}$-geodesic equation. Since $\bar{g}$ is $C^1$ and the right-hand side of the geodesic equation is $-(\Gamma^{\bar{g}})^{i}_{jk}\frac{d}{dt}\Phi(y_t)^j \frac{d}{dt} \Phi(y_t)^k$ and thus continuous, we conclude that $\frac{d}{dt} \Phi(y_t)^i$ is $W^{1,1}_{loc}$, in particular it is absolutely continuous and thus a.e.\ differentiable. So, in fact, $\Phi(y_t)$ solves the $\bar{g}$-geodesic equation a.e.\ pointwise. Again invoking the continuity of the right-hand side, we conclude that $\Phi(y_t)^k$ is $C^2$ and a classical solution.

Hence, since $\bar y_t := \Phi(y_t)$ is a $\bar{g}$-geodesic and 
$\Phi$ is a diffeomorphism,
$$|\dot {\bar y}_1-\dot {\bar y}_0|_{e} \le c |v|_{e}^2$$
similarly.  In both cases $\lim_{r\to 0} c= 0$.
Since $\dot {\bar y}_t= \p\Phi(y_t) \dot y_t$ and $v=\dot y_0$, the foregoing imply
\begin{align*}
|(\p\Phi(y_1)-\p \Phi(y_0))v|_{e} 
&\le |\p\Phi(y_1)\dot y_1 - \p\Phi(y_0)\dot y_0|_{e} + c|v|_{e}^2
\\&\le c|v|_{e}^2.
\end{align*}
Arbitrariness of $y_0,v \in B^{e}_r(0)$ therefore yield
\begin{align*}
\|\p\Phi(y_1)-\p \Phi(y_0)\|_{\R^n \times \R^n} \le c|v|_e
\le c|y_1-y_0|_e.
\end{align*}
Thus $\p\Phi$ has a Lipschitz constant $c$ which decays to $0$ as $r \to 0$.  This shows $\Phi$ is twice differentiable at the origin and its derivative vanishes there.  Since $x_0$ was arbitrary, $\Phi$ is twice differentiable at every other point too; thus $\p^2 \Phi(y_0)$ exists.  Now
$\|\p^2 \Phi(y_0)\| \le c$, so $\lim_{r\to 0} c=0$ shows $\Phi$ to be twice continuously differentiable at the origin --- hence at every other point too. 
\end{proof}

\begin{corollary}[Higher regularity of semi-Riemannian isometries]\label{C: semiRiemannisometriesregularity}
With the same hypotheses and notation as Proposition \ref{Prop: semiRiemannisometriesregularity}, if $g$ and $\bar{g}$ are of regularity $C^k$
for any integer $k \ge 1$ then $\Phi$ is of regularity $C^{k+1}$.
\end{corollary}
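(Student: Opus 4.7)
My plan is to proceed by induction on $k$, with the base case $k=1$ supplied by Proposition~\ref{Prop: semiRiemannisometriesregularity}. The engine of the inductive step is a single pointwise identity that expresses $\partial^2 \Phi$ as an algebraic combination of $\partial \Phi$ with the Christoffel symbols of the two metrics; this identity comes directly from the fact — already established in the proof of Proposition~\ref{Prop: semiRiemannisometriesregularity} — that $\Phi$ sends $g$-geodesics to $\bar g$-geodesics.

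To derive that identity, I would fix an arbitrary $x_0 \in M$, work in local coordinates around $x_0$ and $\Phi(x_0)$, and for any $v \in T_{x_0}M$ invoke Peano's theorem to produce a (possibly nonunique) $g$-geodesic $y_t$ with $y_0 = x_0$ and $\dot y_0 = v$. As observed inside the proof of Proposition~\ref{Prop: semiRiemannisometriesregularity}, $\bar y_t := \Phi(y_t)$ then solves the $\bar g$-geodesic equation. Differentiating $\bar y_t^i = \Phi^i(y_t)$ twice, then eliminating $\ddot y^j$ and $\ddot{\bar y}^i$ via the two geodesic equations, produces
\begin{equation*}
\partial_k\partial_l \Phi^i(y_0)\, v^k v^l
= \partial_j \Phi^i(y_0)\, \Gamma^j_{kl}(y_0)\, v^k v^l - \bar\Gamma^i_{pq}(\Phi(y_0))\, \partial_k \Phi^p(y_0)\, \partial_l \Phi^q(y_0)\, v^k v^l.
\end{equation*}
As $v$ is arbitrary and both sides are quadratic forms in $v$, symmetrization in $(k,l)$ yields the pointwise identity
\begin{equation*}
\partial_k \partial_l \Phi^i = \partial_j \Phi^i\, \Gamma^j_{kl}(\cdot) - \bar\Gamma^i_{pq}(\Phi(\cdot))\, \partial_k \Phi^p\, \partial_l \Phi^q \qquad (\star)
\end{equation*}
valid on all of $M$.

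The regularity bootstrap is then mechanical. Assume $g, \bar g \in C^k$ with $k \ge 2$, and suppose inductively that $\Phi \in C^m$ for some integer $2 \le m \le k$, the case $m=2$ being Proposition~\ref{Prop: semiRiemannisometriesregularity}. Being polynomial expressions in the components of $g, \bar g, g^{-1}, \bar g^{-1}$ and their first partial derivatives, the Christoffel symbols $\Gamma^j_{kl}$ and $\bar\Gamma^i_{pq}$ lie in $C^{k-1}$. The right hand side of $(\star)$ is therefore built by products and compositions from factors in $C^{m-1}$ (namely $\partial \Phi$) and in $C^{k-1} \supseteq C^{m-1}$ (the Christoffel symbols, evaluated at the $C^m$ maps $y \mapsto y$ and $y \mapsto \Phi(y)$). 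Hence the right hand side of $(\star)$ lies in $C^{m-1}$, so $\partial^2 \Phi \in C^{m-1}$, i.e.\ $\Phi \in C^{m+1}$. Iterating from $m=2$ up through $m=k$ yields $\Phi \in C^{k+1}$, as desired. The only step demanding any care is the derivation of $(\star)$; once it is in hand, no regularity is lost at any stage of the iteration, since the Christoffels match the target gain exactly.
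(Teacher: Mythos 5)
Your proposal is correct, but it takes a different route from the paper: the paper disposes of the corollary in one line by combining Proposition~\ref{Prop: semiRiemannisometriesregularity} with a citation of Hartman's theorem \cite[Thm.~II]{Hartman58L}, whereas you reprove the needed special case of that theorem from scratch. Your identity $(\star)$ is the standard ``connection-preserving maps bootstrap'': since Proposition~\ref{Prop: semiRiemannisometriesregularity} already guarantees $\Phi\in C^2$, the chain-rule computation along the Peano geodesics is legitimate, both sides of the pre-polarized identity are symmetric quadratic forms in $v$, and the counting $\Gamma,\bar\Gamma\in C^{k-1}$ together with $\min(k-1,m)\ge m-1$ for $2\le m\le k$ makes the induction close without loss of regularity. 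The one point worth making explicit is that $(\star)$ is a chart-dependent identity, so the conclusion $\partial^2\Phi\in C^{m-1}$ is local --- but that is all one needs for a $C^{k+1}$ statement. What your approach buys is self-containedness and transparency about exactly where the regularity gain comes from (geodesic preservation plus $C^{k-1}$ Christoffel symbols); what the paper's approach buys is brevity and the full strength of Hartman's result, which covers more general geodesic-preserving (affine) maps rather than only isometries. Either is acceptable; yours would simply replace the appeal to \cite{Hartman58L} with an explicit induction.
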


\begin{proof}
Follows immediately by combining Proposition \ref{Prop: semiRiemannisometriesregularity} with
Hartman \cite[Thm. II]{Hartman58L}.
\end{proof}

\section*{Acknowledgments}

MB is supported by the EPFL through a Bernoulli Instructorship. NG is supported in part by the MUR PRIN-2022JJ8KER grant ``Contemporary perspectives on geometry and gravity". RJM's research is supported in part by the Canada Research Chairs program CRC-2020-00289, a grant from the Simons Foundation (923125, McCann), Natural Sciences and Engineering Research Council of Canada Discovery Grant RGPIN- 2020--04162, and Toronto's Fields Institute for the Mathematical Sciences, where this collaboration began. AO was supported in part by the Canada Research Chairs program CRC-2020-00289 and Natural Sciences and Engineering Research Council of Canada Grant RGPIN-2020-04162.

The authors are grateful to Guido De Philippis and Cale Rankin for stimulating exchanges.

This research was funded in part by the Austrian Science Fund (FWF) [Grants DOI \href{https://doi.org/10.55776/STA32}{10.55776/STA32}, \href{https://doi.org/10.55776/EFP6}{10.55776/EFP6} and \href{https://doi.org/10.55776/J4913}{10.55776/J4913}]. For open access purposes, the authors have applied a CC BY public copyright license to any author accepted manuscript version arising from this submission. 

\bibliographystyle{plain}%{mf}
\bibliography{newbib.bib}

\end{document}